\theoremstyle{definition}
\newtheorem{theorem}{Theorem}[section]
\newtheorem{claim}[theorem]{Claim}
\newtheorem{conjecture}[theorem]{Conjecture}
\newtheorem{corollary}[theorem]{Corollary}
\newtheorem{example}[theorem]{Example}
\newtheorem{lemma}[theorem]{Lemma}
\newtheorem{notation}[theorem]{Notation}
\newtheorem{proposition}[theorem]{Proposition}
\newtheorem{remark}[theorem]{Remark}
\newtheorem{definition}[theorem]{Definition}
\newcommand{\R}{\mathbb{R}}
\newcommand{\Z}{\mathbb{Z}}
\newcommand{\Q}{\mathbb{Q}}
\newcommand{\boldhead}[1]{%
 {\bigskip \noindent \bfseries #1 \\ }}
\renewcommand{\ldots}{...}
\DeclareMathOperator{\Hom}{Hom}
\DeclareMathOperator{\rot}{rot}
\DeclareMathOperator{\PSL}{PSL}
\DeclareMathOperator{\PSLk}{PSL^{(k)}}
\DeclareMathOperator{\SL}{SL}
\DeclareMathOperator{\rott}{\tilde{r}ot}
\DeclareMathOperator{\Diff}{Diff}
\DeclareMathOperator{\Homeo}{Homeo}
\DeclareMathOperator{\id}{id}
\DeclareMathOperator{\euler}{e}
\DeclareMathOperator{\proj}{proj}
\DeclareMathOperator{\fix}{fix}
\DeclareMathOperator{\Aut}{Aut}
\title{Components of spaces of surface group representations }
\author{Kathryn Mann}
\date{}
\begin{document}

\maketitle

\abstract{We give a new lower bound on the number of topological components of the space of representations of a surface group into the group of orientation preserving homeomorphisms of the circle.   Precisely, for the fundamental group of a genus $g$ surface, we show there are at least $k^{2g} + 1$ components containing representations with Euler number $\frac{2g-2}{k}$, for each nontrivial divisor $k$ of $2g-2$.  
We also show that certain representations are \emph{rigid}, meaning that all deformations lie in the same semi-conjugacy class.   Our methods apply to representations of surface groups into finite covers of $\PSL(2,\R)$ and into $\Diff_+(S^1)$ as well, in which case we recover theorems of W. Goldman and J. Bowden. 

The key technique is an investigation of \emph{local maximality} phenomena for rotation numbers of products of circle homeomorphisms using techniques of Calegari--Walker.  This is a new approach to studying deformation classes of group actions on the circle, and may be of independent interest.}


\section{Introduction}

Let  $\Gamma_g$ denote the fundamental group of the closed, genus $g$ surface $\Sigma_g$, and let $G$ be a group of orientation preserving homeomorphisms of the circle.  We study the space $\Hom(\Gamma_g, G)$ of representations from $\Gamma_g$ to $G$.  
This space has two natural and important interpretations; first as the space of flat circle bundles over $\Sigma_g$ with structure group $G$, and secondly as the space of actions of $\Gamma_g$ on $S^1$ with degree of regularity specified by $G$.  (For example, taking $G = S^1$ corresponds to actions by isometries, $G= \PSL(2,\R)$ to actions by M\"obius transformations, and $G = \Diff^r(S^1)$ to actions by $C^r$ diffeomorphisms.)
A fundamental question is to identify and characterize the \emph{deformation classes} of $\Gamma_g$ actions; equivalently, the deformation classes of flat circle bundles, or the connected components of $\Hom(\Gamma_g, G)$.

When $G \subset \Homeo_+(S^1)$ is a transitive Lie group, $\Hom(\Gamma_g, G)$ is a classical object of study called the \emph{representation variety}.  In this case, the Milnor-Wood inequality together with work of W. Goldman gives a complete classification of the connected components of $\Hom(\Gamma_g, G)$ using the Euler number.
Far less is known when $G$ is not a Lie group, and the problem of classifying components of $\Hom(\Gamma, G)$ for $G = \Homeo_+(S^1)$ is  essentially completely open.   The purpose of this paper is to develop a new approach to distinguish components of $\Hom(\Gamma_g, G)$ applicable to the $G = \Homeo_+(S^1)$ case.   

Our approach is based on recent work of Calegari--Walker, which gives tools to analyze the rotation numbers of products of circle homeomorphisms. With this approach, we are able to give a new lower bound on the number of connected components of $\Hom(\Gamma_g, \Homeo_+(S^1))$, show that the Euler number does \emph{not} distinguish components (see Theorem \ref{main thm}), and prove strong rigidity results (Theorem \ref{semiconj thm}) for certain representations.

\boldhead{Lie groups: known results}
We start with a brief summary of known results on $\Hom(\Gamma_g, G)$ when $G \subset \Homeo(S^1)$ is a Lie group.  In this case, $\Hom(\Gamma_g, G)$ has the structure of an affine variety and hence can have only finitely many components.  The interesting case is when $G$ acts transitively on $S^1$, in which case $G$ must either be $S^1$ or the $k$-fold cyclic cover $\PSLk$ of $\PSL(2,\R)$ for some $k \geq 1$.  
It is easy to see that $\Hom(\Gamma_g, S^1)$ is connected, and the components of $\Hom(\Gamma_g, \PSLk)$
are completely classified by the following theorem.  

\begin{theorem}[Goldman \cite{Goldman}, using also Milnor \cite{Milnor}] \label{goldman thm}
Let $G$ be the $k$-fold cyclic cover $\PSLk$ of $\PSL(2,\R)$, $k \geq 1$. The connected components of $\Hom(\Gamma_g, G)$ are completely classified by 
\begin{enumerate}[a)]
\item The Euler number of the representation, which assumes all values between $-\lfloor \frac{2g-2}{k} \rfloor$ and $\lfloor \frac{2g-2}{k} \rfloor$ and is constant on connected components.  Each value of the Euler number is assumed on a single connected component, with the sole exception of b) below.
\item  In the case where $2g-2 = nk$ for some integer $n$, there are $k^{2g}$ connected components of representations with Euler number $n$ and  $k^{2g}$ with Euler number $-n$.  These components are distinguished by the rotation numbers of a standard set of generators for $\Gamma_g$.  
\end{enumerate}
\end{theorem}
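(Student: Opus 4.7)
The plan is to assemble four ingredients: integrality and local constancy of the Euler number, the Milnor-Wood bound, a connectedness statement over $\PSL(2,\R)$, and an analysis of lifts through the central extension $\Z/k \to \PSLk \to \PSL(2,\R)$. For the first, passing through the universal central extension $\widetilde{\PSL}(2,\R)$ and choosing a set-theoretic lift $\tilde\rho$, the product $\prod_{i=1}^{g}[\tilde\rho(a_i),\tilde\rho(b_i)]$ lies in the kernel $k\Z$ of $\widetilde{\PSL}(2,\R) \to \PSLk$ and varies continuously with $\rho$, so the integer $\euler(\rho)$ defined as $\frac{1}{k}$ of this quantity is locally constant. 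Applying the Milnor-Wood inequality to the composition with the projection $\PSLk \to \PSL(2,\R)$ yields $|\euler(\rho)| \leq \lfloor (2g-2)/k \rfloor$.

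Next I would realize each integer in the allowed range. The extremal values come from Fuchsian representations of $\Gamma_g$ into $\PSL(2,\R)$, lifted to $\PSLk$ in the case $k \mid 2g-2$. Intermediate values are produced by a cut-and-paste construction: split $\Sigma_g$ along a separating simple closed curve into two subsurfaces, use a Fuchsian-type representation on one side and a trivial representation on the other, and glue. Euler numbers are additive under this operation, so every integer in the Milnor-Wood range is attained.

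The heart of the argument is the component count within a fixed Euler level. For $k = 1$, this is the classical part of the theorem: each non-maximal level in $\Hom(\Gamma_g, \PSL(2,\R))$ is connected, and each maximal level is a connected Teichm\"uller component. To transfer this to $\PSLk$ for $k > 1$, observe that the preimage of a $\PSL(2,\R)$-component is a union of lifts, and the lifts of any single representation form a torsor over $H^1(\Sigma_g;\Z/k)\cong(\Z/k)^{2g}$. For non-maximal Euler number, I would show these lifts are path-connected in $\Hom(\Gamma_g,\PSLk)$ by deforming the underlying $\PSL(2,\R)$-representation to one whose image lies in a compact abelian subgroup, where the $(\Z/k)^{2g}$ ambiguity can be dissolved by continuous loops in $\PSLk$. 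Each non-maximal level thus contributes exactly one component.

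The maximal case $2g-2=nk$, $|\euler|=n$, is where rigidity intervenes: the Teichm\"uller component consists of discrete faithful representations with Zariski-dense image, which admit no deformation into an abelian subgroup, so its $k^{2g}$ lifts remain in distinct components. The rotation numbers $\rot(\rho(a_i)),\rot(\rho(b_i))\in\frac{1}{k}\Z/\Z$ of a standard generating set are locally constant on $\Hom(\Gamma_g,\PSLk)$ and separate these components. The main obstacle is the non-maximal connectedness claim: already for $k = 1$ it relies on nontrivial deformation-theoretic input (Goldman's original argument via Higgs bundles or Hitchin's classification), and for $k > 1$ one must control lifts along the entire deformation path without encountering monodromy obstructions.
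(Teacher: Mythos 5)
The paper does not prove this theorem; it is stated as background and cited to Goldman and Milnor, so there is no in-text proof to compare against. Judged on its own terms, your outline captures the right ingredients (integrality and local constancy of $\euler$, Milnor--Wood, connectedness of non-maximal levels for $\PSL(2,\R)$, and a lift-monodromy analysis through the central extension $\Z/k \to \PSLk \to \PSL(2,\R)$), but it contains a genuine gap in the non-maximal connectedness step for $\PSLk$.

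You propose to show that all $k^{2g}$ lifts of a non-maximal $\PSL(2,\R)$-representation lie in one $\PSLk$-component by ``deforming the underlying $\PSL(2,\R)$-representation to one whose image lies in a compact abelian subgroup.'' This only works when the Euler number is $0$: the relator $\prod [a_i,b_i]$ is a product of commutators, so any representation into an abelian subgroup has $\euler = 0$, and Euler number is constant on components. For non-maximal but nonzero levels, no such deformation exists, and you still have to collapse the $(\Z/k)^{2g}$-torsor of lifts. The standard route is different: for $|\euler| < \lfloor (2g-2)/k\rfloor$, the underlying $\PSL(2,\R)$-component contains representations that are trivial on a chosen pair $(a_i,b_i)$ of standard generators (because $\euler$ is non-maximal, Milnor--Wood still permits the remaining handles to carry the full Euler number), and at such a representation the two coordinates of the lift ambiguity on $(a_i,b_i)$ can be dissolved by a path of rotations in the center of $\PSLk$; cycling through $i = 1,\dots,g$ handles the whole torsor. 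You would need to prove that each non-maximal component actually contains such a ``degenerate handle'' representation, which is a nontrivial input from Goldman's analysis, not a formal consequence. Two smaller points: the claim that the lifts of a fixed representation form a torsor over $H^1(\Sigma_g;\Z/k)$ is correct and is the right framing, but the monodromy argument that identifies them for non-maximal levels is exactly what must be supplied; and Goldman's original 1988 proof of connectedness for $\PSL(2,\R)$ did not use Higgs bundles (that is Hitchin's later approach), so attributing it to him as such is a misstatement of the history, though it does not affect the mathematics.
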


\noindent The \emph{Euler number} of a representation $\rho: \Gamma_g \to G \subset \Homeo_+(S^1)$ and the \emph{rotation number} of a homeomorphism of $S^1$ are classical invariants; we recall their definitions in Section \ref{background sec}.

\boldhead{Non-Lie groups}
When $G$ is not a Lie group, describing the space $\Hom(\Gamma_g, G)$ is more challenging.  We are particularly interested in the case $G = \Homeo_+(S^1)$.  
Though there are still only finitely many possible values for the Euler number (this is the \emph{Milnor-Wood inequality}, see Section \ref{Euler subsec}), it is not even known if $\Hom(\Gamma_g, \Homeo_+(S^1))$ has finitely many or infinitely many components.    
On the other hand, it is possible \emph{a priori} that $\Hom(\Gamma_g, \Homeo_+(S^1))$ could be in a sense ``more connected" than $\Hom(\Gamma_g, \PSLk)$ -- for instance two representations into $\PSLk$, both with Euler number $\frac{2g-2}{k}$, but lying in different components of $\Hom(\Gamma_g, \PSLk)$, could potentially be connected by a path of representations in $\Hom(\Gamma_g, \Homeo_+(S^1))$.  

J. Bowden recently showed that this kind of additional connectedness does not hold for $\Hom(\Gamma_g, \Diff_+(S^1))$.  He proves the following.  

\begin{theorem}[Bowden, see Theorem 9.5 in \cite{Bowden}] \label{bowden thm}
Let $\rho_1$ and $\rho_2: \Gamma_g \to \PSLk$ be representations that lie in different connected components of $\Hom(\Gamma_g, \PSLk)$.  Then they also lie in different connected components of $\Hom(\Gamma_g, \Diff_+(S^1))$.  
\end{theorem}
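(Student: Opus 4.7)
The plan is to separate the problem by Euler number, reduce the nontrivial situation to the extremal stratum, and then combine a rigidity theorem for maximal-Euler-class representations with the smoothness hypothesis. First, the Euler number $\euler : \Hom(\Gamma_g, \Homeo_+(S^1)) \to \Z$ is integer-valued and locally constant (by the standard semi-continuity properties of the rotation cocycle; see Section \ref{background sec}). This already distinguishes any two representations with distinct Euler numbers, even after restricting to $\Hom(\Gamma_g, \Diff_+(S^1))$. By Theorem \ref{goldman thm}, the only remaining case is when $\rho_1$ and $\rho_2$ both lie in the extremal stratum $\euler = \pm (2g-2)/k$; here the $k^{2g}$ components in $\Hom(\Gamma_g, \PSLk)$ are distinguished by the tuple $\bigl(\rot(\rho(a_i)), \rot(\rho(b_i))\bigr)_{i=1}^g$ of rotation numbers of a standard generating set $a_1, b_1, \dots, a_g, b_g$.

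For this extremal case, I would invoke Matsumoto's rigidity theorem for representations of maximal Euler class: lifting a representation $\rho:\Gamma_g \to \PSLk$ with $\euler(\rho) = (2g-2)/k$ to the $k$-fold cover $\Homeok(S^1)$ of $\Homeo_+(S^1)$ yields a representation of maximal Euler number, which is then semi-conjugate to a lifted Fuchsian representation. Rotation numbers of individual elements are invariants of semi-conjugacy, and the Fuchsian lifts in $\PSLk$ have rotation numbers in $\frac{1}{k}\Z/\Z$. Hence along any continuous path $\rho_t$ in $\Hom(\Gamma_g, \Diff_+(S^1))$ of constant extremal Euler number, each function $t \mapsto \rot(\rho_t(a_i))$ and $t \mapsto \rot(\rho_t(b_i))$ takes values in the discrete set $\frac{1}{k}\Z/\Z$.

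The principal step --- and the main obstacle --- is to promote this ``discrete-valued'' statement to ``constant in $t$.'' The rotation number $\rot : \Homeo_+(S^1) \to \R/\Z$ is not continuous in general, so continuity of $t \mapsto \rho_t(\gamma)$ does not immediately force constancy of $\rot(\rho_t(\gamma))$. The smoothness hypothesis must be used essentially here: indeed, Theorem \ref{main thm} shows that in $\Homeo_+(S^1)$ the rotation numbers of standard generators genuinely \emph{can} jump, producing strictly more than $k^{2g}$ components, so any proof valid in $\Diff_+(S^1)$ must exploit $C^1$-regularity. A natural route, consonant with the rest of the paper, is a Calegari--Walker-style analysis: the extremality of the Euler number forces the generator rotation numbers to sit at a local extremum of a rotation-number function constrained by the relation $\prod [\rho(a_i), \rho(b_i)] = \id$, and the regularity of elements of $\Diff_+(S^1)$ precludes the ``flat-interval collapse'' that allows $C^0$-deformations to shift rotation numbers within $\frac{1}{k}\Z/\Z$. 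Once constancy of the rotation-number tuple along $\rho_t$ is established, part (b) of Theorem \ref{goldman thm} immediately distinguishes $\rho_1$ and $\rho_2$, completing the proof.
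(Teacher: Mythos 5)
Your decomposition by Euler number and reduction to the extremal $\PSLk$ stratum matches the paper's setup, but the core of your argument has a gap and also rests on a misreading of Theorem~\ref{main thm}. First, the misreading: you assert that Theorem~\ref{main thm} shows the rotation numbers of standard generators ``genuinely can jump'' in $\Homeo_+(S^1)$, and you conclude that any valid $\Diff_+$-proof must invoke $C^1$-regularity. This is backwards. Theorem~\ref{main thm}, via Theorem~\ref{rot rig thm}, proves precisely that these rotation numbers are \emph{constant} on the connected component of a maximal $\PSLk$ representation, already in $\Hom(\Gamma_g,\Homeo_+(S^1))$; the extra ``$+1$'' component is produced by a representation with the same Euler number but an \emph{irrational} generator rotation number, which lies in a component disjoint from all $\PSLk$-components rather than being reached from one by a jump. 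Consequently the paper deduces Bowden's theorem as an immediate corollary of the stronger $\Homeo_+$ result, and no regularity hypothesis is needed at all.

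Second, the gap: your invocation of ``Matsumoto's rigidity theorem for representations of maximal Euler class'' does not apply along a path. A maximal $\PSLk$ representation $\rho$ has $\euler(\rho)=(2g-2)/k$, which is \emph{not} maximal in $\Homeo_+(S^1)$ when $k>1$. To convert it into a genuinely maximal Euler number representation you must project to $\PSL(2,\R)$ through the $k$-fold covering, and this requires that the image commute with the order-$k$ rotation. This holds for $\rho$ since $\rho(\Gamma_g)\subset\PSLk$, but it fails for a nearby $\rho_t\in\Hom(\Gamma_g,\Diff_+(S^1))$, whose image in general has no such commuting structure. Without it you cannot project, so you cannot apply maximal-Euler-class rigidity to the deformed representations; establishing that each $\rho_t$ is nevertheless semi-conjugate to a $\PSLk$-Fuchsian lift is exactly the content of Theorems~\ref{rot rig thm} and~\ref{semiconj thm}, so your argument assumes what must be proved. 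A smaller point: $\rot$ \emph{is} continuous on $\Homeo_+(S^1)$ with the uniform topology (see Section~\ref{rot number subsec}), so the ``discrete-valued implies constant'' step you single out as the principal obstacle would be automatic once the semi-conjugacy claim were secured; the actual obstacle is the semi-conjugacy claim itself.
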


Bowden gives two proofs, one using invariants of contact structures associated to the transverse foliation on a flat circle bundle, and the other using structural stability of Anosov flows.  Both proofs assume $C^\infty$ regularity of diffeomorphisms, although a similar strategy might work assuming only $C^2$.  However, the question for representations into $\Homeo_+(S^1)$ is essentially different, and Bowden asks if his results hold in this case. 
Our main theorem gives an affirmative answer.

\begin{theorem}[\textbf{Lower bound}] \label{main thm}
Let $\Gamma_g$ be the fundamental group of a genus $g$ surface.  For each nontrivial divisor $k$ of $2g-2$, there are at least $k^{2g} +1$ components of $\Hom(\Gamma_g, \Homeo_+(S^1))$ consisting of representations with Euler number $\frac{2g-2}{k}$.   

\noindent In particular, two representations into $\PSLk$ that lie in different components of $\Hom(\Gamma_g, \PSLk)$ necessarily lie in different components of $\Hom(\Gamma_g, \Homeo_+(S^1))$.  
\end{theorem}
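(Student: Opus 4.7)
The plan is to introduce a locally constant invariant on the subspace $\mathcal{R}_k := \{\rho \in \Hom(\Gamma_g, \Homeo_+(S^1)) : \euler(\rho) = (2g-2)/k\}$ that takes $k^{2g}$ distinct values---recovering Goldman's components inside $\Homeo_+(S^1)$---and then to produce one additional component by explicit construction. The invariant I would use is the rotation-number tuple
$$\Phi(\rho) := (\rot \rho(a_i), \rot \rho(b_i))_{i=1}^g \in (\R/\Z)^{2g},$$
where $a_1, b_1, \ldots, a_g, b_g$ is a standard generating set of $\Gamma_g$.

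The central technical claim is that $\Phi$ is locally constant on $\mathcal{R}_k$. To prove it I would lift the generators to the universal cover of $\Homeo_+(S^1)$ and write $\euler(\rho)$ as the translation number of a lift of $\prod [a_i, b_i]$. The Milnor-Wood inequality bounds this translation number in absolute value by $2g-2$, so attaining the value $(2g-2)/k$ forces each commutator (and, more generally, each partial product appearing in the surface relation) to contribute maximally given the rotation numbers of its factors. The Calegari--Walker sharp bounds on rotation numbers of products then take over: equality in those bounds imposes algebraic constraints pinning each $\rot \rho(a_i)$ and $\rot \rho(b_i)$ to a locally maximal, hence locally constant, value on $\mathcal{R}_k$.

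Granted local constancy of $\Phi$, Goldman's theorem supplies $k^{2g}$ representations into $\PSLk \subset \Homeo_+(S^1)$ realizing all tuples in $(\tfrac{1}{k}\Z/\Z)^{2g}$. Any path in $\Hom(\Gamma_g, \Homeo_+(S^1))$ between two of them stays in $\mathcal{R}_k$---since the Euler number is locally constant on $\Hom$, taking only finitely many values---and so $\Phi$ must be constant along such a path, contradicting distinctness of the endpoints. This yields $k^{2g}$ components. For the extra $+1$ I would construct $\rho^* \in \mathcal{R}_k$ lying in none of them: for $k \geq 2$, exhibit $\rho^*$ with a rotation-number tuple outside $(\tfrac{1}{k}\Z/\Z)^{2g}$ (for instance with an irrational rotation number on $a_1$), which is automatically in a new component; for $k=1$, where Matsumoto's theorem forces all maximal-Euler representations into a single semi-conjugacy class and hence a single rotation-number tuple, $\rho^*$ must come from a subtler construction---for example a Denjoy-type blow-up of a Fuchsian representation, separated from Teichm\"uller space by an invariant finer than $\Phi$.

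The main obstacle is the local-constancy step: extracting rigidity of all $2g$ rotation numbers from the Calegari--Walker product bounds, under the single surface relation $\prod [a_i, b_i] = 1$, requires a careful extremal analysis of how rotation numbers of nested subwords constrain one another. A secondary difficulty is the $k=1$ case of the construction: the extra component must coexist with the Fuchsian one in a single semi-conjugacy class, and the separating invariant is therefore something other than $\Phi$ itself.
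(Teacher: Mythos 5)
Your overall architecture matches the paper's: use a rigidity result for rotation numbers of generators to separate the $k^{2g}$ Goldman components inside $\Homeo_+(S^1)$, then produce one more component by exhibiting a representation with an ``illegal'' rotation number. The paper carries out exactly this plan, deducing Theorem \ref{main thm} from Theorem \ref{rot rig thm} together with the construction you sketch. However, your central technical claim is false as stated, and the justification you offer for it is where the error lies.

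You assert that $\Phi$ is locally constant on $\mathcal{R}_k$. It is not. The very representation $\rho^*$ you propose for the ``$+1$'' disproves it: in the paper's construction one takes $\rho^*(a_g)$ and $\rho^*(b_g)$ to be commuting rotations $r_\alpha, r_\beta$. Since $[r_\alpha, r_\beta]=\mathrm{id}$, one may vary $\alpha$ and $\beta$ continuously without disturbing the surface relation or the Euler number, so $\rot(\rho^*(a_g))$ varies continuously within $\mathcal{R}_k$. Thus $\Phi$ is locally constant only near the maximal $\PSLk$ representations, and proving even that is the heart of the matter. Your argument for it--that achieving $\euler(\rho)=\frac{2g-2}{k}$ forces each commutator to contribute maximally--is incorrect for $k>1$: the Milnor--Wood bound gives $\rott[\rho(a_i),\rho(b_i)]\le 1$, and the total $\frac{2g-2}{k}<2g-2$ can be distributed among the $g$ commutators in infinitely many ways (e.g.\ some commutators contributing $0$, others nearly $1$). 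No extremality is forced by the Euler number alone. The paper instead \emph{starts} from the fine dynamical structure of $\PSLk$ representations (Propositions \ref{maximal reps prop} and \ref{maximal k reps prop}: each $[\rho_0(a_i),\rho_0(b_i)]$ has lifted rotation number exactly $1/k$ with a specific cyclic ordering of periodic orbits) and shows via a detailed analysis of the Calegari--Walker dynamical system (Sections \ref{CW sec}--\ref{rot pf sec}, including the Euclidean algorithm and Matsumoto covering tricks) that these properties persist on the whole component. That persistence is Theorem \ref{rot rig thm} and is genuinely nontrivial; your proposal waves at it but does not contain the argument, and the one reason you do give cannot be repaired.

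Two smaller points. First, your discussion of $k=1$ is moot: the theorem is stated for \emph{nontrivial} divisors of $2g-2$, precisely because for $k=1$ Matsumoto rigidity forces a unique semi-conjugacy class at Euler number $2g-2$ and the ``$+1$'' claim would fail; a Denjoy blow-up is semi-conjugate to the original and so gives nothing new, which you correctly intuit but should have taken as a signal that $k=1$ is excluded. Second, once one has Theorem \ref{rot rig thm} the separation argument for the $k^{2g}$ components goes exactly as you say and your ``$+1$'' construction (an irrational rotation number on one generator) is the right idea and matches the paper's.
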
 

The primary tool in our proof, suggested to the author by D. Calegari, is the study of \emph{rotation numbers} of elements in the image of a representation $\rho$.  For each $\gamma \in \Gamma_g$, let $\rot_\gamma: \Hom(\Gamma_g, \Homeo_+(S^1)) \to \R/\Z$ be defined by  $\rot_\gamma(\rho) = \rot(\rho(\gamma))$, where $\rot(\cdot)$ denotes rotation number.  We prove a strong form of rigidity.  

\begin{theorem}[\textbf{Rotation number rigidity}]  \label{rot rig thm}
Let $X \subset \Hom(\Gamma_g, \Homeo_+(S^1))$ be the connected component of a representation $\rho$ with image in $\PSLk$ and Euler number $\euler(\rho) = \pm( \frac{2g-2}{k})$.  Then $\rot_\gamma$ is constant on $X$. 
\end{theorem}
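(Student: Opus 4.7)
The plan is to prove local constancy of $\rot_\gamma$ on $X$; combined with continuity of the rotation number on $\Homeo_+(S^1)$, this forces constancy on the connected component. Since $k \mid 2g-2$, the Euler number is a continuous integer-valued function on $\Hom(\Gamma_g, \Homeo_+(S^1))$, hence is constant on $X$: every $\rho' \in X$ satisfies $\euler(\rho') = \pm(2g-2)/k$.

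Fix a standard presentation $\Gamma_g = \langle a_1, b_1, \ldots, a_g, b_g \mid \prod_i [a_i, b_i]\rangle$. For a continuous path $\rho_t$ in $X$ with $\rho_0 = \rho$, choose continuous lifts $\tilde A_i(t), \tilde B_i(t) \in \widetilde{\Homeo}_+(S^1)$ of $\rho_t(a_i), \rho_t(b_i)$; the lifted product of commutators $\prod_i [\tilde A_i(t), \tilde B_i(t)] = T^{\euler(\rho)}$ is then constant in $t$, where $T$ denotes the generator of the center of $\widetilde{\Homeo}_+(S^1)$. The standard derivation of the Milnor--Wood inequality bounds the rotation number of each lifted commutator factor, and the hypothesis that $\rho$ has image in $\PSLk$ with Euler number $\pm(2g-2)/k$ saturates these bounds and forces $\rot(\rho(a_i)), \rot(\rho(b_i))$ into the resonant set $\frac{1}{k}\Z/\Z$. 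This is where the Calegari--Walker machinery is applied: at such a saturated, resonant configuration, any continuous deformation preserving the product-of-commutators equation with fixed right-hand side $T^{\euler(\rho)}$ must rigidly preserve each individual rotation number $\rot(\tilde A_i(t))$ and $\rot(\tilde B_i(t))$. This \emph{local maximum rigidity}---the ``local maximality phenomena for rotation numbers of products'' flagged in the abstract---yields constancy of $\rot_{a_i}$ and $\rot_{b_i}$ on $X$ for each standard generator.

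To extend constancy of $\rot_\gamma$ to an arbitrary $\gamma \in \Gamma_g$, the plan is to apply the same Calegari--Walker mechanism after a Nielsen-type change of generators that places $\gamma$ in the role of one of the standard generators of an alternative presentation (using that mapping classes of $\Sigma_g$ induce automorphisms of $\Gamma_g$ carrying simple closed curves to standard generators). For elements $\gamma$ not arising from simple closed curves, one iteratively applies local maximum rigidity to sub-products of the commutator relation that isolate $\gamma$, reducing the general case to a finite cascade of rigidity statements for words in the generators.

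The principal obstacle is formulating and proving the Calegari--Walker local maximum rigidity for products of commutators in $\widetilde{\Homeo}_+(S^1)$ in the required generality: one must show that tight saturation of the rotation number bound on such a product, together with the resonance constraint imposed at one endpoint of the path by the $\PSLk$ hypothesis, suffices to freeze the rotation numbers of all factors under arbitrary continuous deformations---including deformations that exit $\PSLk$ and involve non-smooth homeomorphisms whose dynamics are not controlled \emph{a priori}. Adapting the Calegari--Walker analysis of rotation numbers of products of circle homeomorphisms to the commutator setting, while preserving rigidity under only $C^0$ perturbations, is the technical heart of the argument.
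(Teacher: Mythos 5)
Your high-level strategy mirrors the paper's: establish that maximal $\PSLk$ representations sit at a local maximum for appropriate rotation-number functionals, use this to freeze the rotation numbers of a standard generating set, then extend to general $\gamma$ via covering tricks. But the step you flag as the ``principal obstacle'' is precisely the entire content of the theorem, and the specific way you propose to close it would fail.

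Two concrete problems. First, saturating Lemma \ref{comm lemma} requires knowing the rotation number of at least one of $\rho(a_i)$, $\rho(b_i)$ has denominator exactly $k$. But for a maximal $\PSLk$ representation it is entirely possible that $\rot(\rho(a_i)) = \rot(\rho(b_i)) = 0$; indeed this is the generic case (lifts of hyperbolic elements fixing points). In that situation the bound $|\rott[a,b]| \leq 1/q$ gives $|\rott[a,b]| \leq 1$, which tells you nothing about whether a nearby deformation can break equality at $1/k$. The paper's handling of exactly this case occupies most of Section \ref{technical subsec}: it analyzes the combinatorial structure of $\fix(\rho(a_i))$ and $\fix(\rho(b_ia_i^{-1}b_i^{-1}))$ as ``doubled alternating pairs,'' locates attracting and repelling fixed sets, and proves these persist under $C^0$ perturbation. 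The case where $\rot(\rho_0(a_i)) \neq 0$ is then reduced to the zero case by the Euclidean-algorithm trick of Section \ref{Euc alg subsec}, not handled directly. Second, even granting that each individual commutator satisfies $\rott[\rho(a_i),\rho(b_i)] \leq 1/k$, naively iterating the Calegari--Walker product bound (Theorem \ref{CW thm}) on the $(g-1)$-fold product of commutators yields the estimate $g-2$, which is \emph{independent of $k$} and far weaker than the needed $\frac{2g-3}{k}$. The paper explicitly flags this failure in Section \ref{mod subsec}. Getting the sharp bound requires keeping track of the cyclic ordering of periodic orbits of the commutators (Proposition \ref{maximal k reps prop}(iii), Example \ref{more general example}) and proving this ordering is stable under deformation (Corollary \ref{orbit structure cor}, Proposition \ref{int N prop}). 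Neither of these refinements appears in your proposal; the ``saturation forces rigidity'' claim asserts the conclusion of that analysis without supplying it.

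The covering trick for extending to general $\gamma$ is correct in outline and matches the paper; however, your alternative suggestion of using Nielsen transformations to move $\gamma$ into a standard generating set does not work for all $\gamma$ — only simple closed curve classes can be so placed, and the paper also needs Scott's theorem plus finite covers precisely because general $\gamma$ need not be simple. So even that reduction step, as you sketch it, has a gap that the covering argument is designed to fill.
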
 

Bowden calls a representation $\Gamma_g \to \PSLk$ with Euler number $\frac{2g-2}{k}$ an \emph{Anosov representation}, as the transverse foliation on the associated flat circle bundle is the weak-stable foliation of an Anosov flow.  A key step in Bowden's second proof of Theorem \ref{bowden thm} is to show that any representation $\Gamma_g \to \Diff_+(S^1)$ in the connected component of an Anosov representation is also Anosov, in that it preserves the weak-stable foliation of some Anosov flow.  From this, he is able to show that each Anosov component of $\Hom(\Gamma_g, \Diff_+(S^1))$ consists of a single conjugacy class of representations, using results of Matsumoto and Ghys.  

We reach a similar conclusion, but the appropriate notion for homeomorphisms is \emph{semi-conjugacy}. 

\begin{theorem}[\textbf{Representation rigidity}] \label{semiconj thm}
Let $\rho: \Gamma_g \to \PSLk$ have Euler number $\pm(\frac{2g-2}{k})$.  Then the connected component containing $\rho$ in $\Hom(\Gamma_g, \Homeo_+(S^1))$ consists of the semi-conjugacy class of $\rho$. 
\end{theorem}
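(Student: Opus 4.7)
The plan is to reduce the theorem to Theorem \ref{rot rig thm} by invoking classical rigidity results of Matsumoto and Ghys. One direction---that any representation semi-conjugate to $\rho$ lies in the connected component $X$ of $\rho$---is standard and essentially automatic: given a degree-one monotone semi-conjugacy $h\colon S^1 \to S^1$ from $\rho'$ to $\rho$, one interpolates a lift of $h$ to the identity on $\R$ through monotone degree-one maps (for instance by straight-line homotopy), producing a continuous path from $\rho'$ to $\rho$ in $\Hom(\Gamma_g, \Homeo_+(S^1))$.

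The substantive direction is the reverse inclusion: every $\rho' \in X$ is semi-conjugate to $\rho$. First I would apply Theorem \ref{rot rig thm} to conclude that for every $\gamma \in \Gamma_g$ and every $\rho' \in X$, one has $\rot(\rho'(\gamma)) = \rot(\rho(\gamma))$; that is, $\rho$ and $\rho'$ induce the same function $\Gamma_g \to \R/\Z$. Next I would use the structure of $\rho$: since it factors through $\PSLk$ with Euler number $\pm(2g-2)/k$, the projected representation $\Gamma_g \to \PSL(2,\R)$ is Fuchsian (by Goldman/Milnor), so $\rho$ is the lift of a Fuchsian action and in particular acts minimally on $S^1$. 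Finally, I would invoke Ghys's theorem---two representations into $\Homeo_+(S^1)$ are semi-conjugate iff they have the same bounded Euler class in $H^2_b(\Gamma_g; \Z)$---together with Matsumoto's explicit formula expressing the bounded Euler cocycle in terms of translation numbers of canonical lifts to $\widetilde{\Homeo}_+(S^1)$. This formula shows that the bounded Euler class is determined by the rotation number function $\gamma \mapsto \rot(\rho(\gamma))$ together with the integer Euler number, both of which are constant on $X$. Hence $e^b(\rho') = e^b(\rho)$ and $\rho'$ is semi-conjugate to $\rho$.

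The main obstacle is the precise deployment of the Matsumoto--Ghys machinery: one needs to check that the rotation number function together with the Euler number really determines the bounded Euler cocycle, which requires a careful choice of ``canonical'' lifts $\tilde\rho(\gamma) \in \widetilde{\Homeo}_+(S^1)$ with translation numbers in $[0,1)$ and verification that the resulting integer $2$-cocycle $c(\gamma_1,\gamma_2) = \tau\bigl(\tilde\rho(\gamma_1)\tilde\rho(\gamma_2)\bigr) - \tau\bigl(\tilde\rho(\gamma_1\gamma_2)\bigr)$ depends only on the rotation-number data. An alternative, more hands-on route is to construct the semi-conjugacy directly: identify the unique minimal set $K$ of $\rho'$, use equality of rotation numbers to define a continuous monotone intertwining map from $K$ to $S^1$ on an orbit of $\rho'$ and extend by continuity, then collapse the complementary intervals to obtain a degree-one semi-conjugacy $S^1 \to S^1$. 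Either approach requires care with sign conventions and with the fact that the ambient circle is the $k$-fold cover of $\R P^1$ rather than $\R P^1$ itself.
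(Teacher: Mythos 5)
Your high-level route is the one the paper takes: combine Theorem \ref{rot rig thm} with the Matsumoto--Ghys characterization of semi-conjugacy. But there is a genuine gap in the hard direction, and it is exactly the point you flag as ``the main obstacle'' and then leave unresolved. You assert that the bounded Euler class is ``determined by the rotation number function $\gamma \mapsto \rot(\rho(\gamma))$ together with the integer Euler number.'' That is false in general: Matsumoto's criterion (Proposition \ref{matsumoto prop} in the paper) requires agreement of the rotation numbers \emph{and} of the two-variable invariant $\tau(a,b) = \rott(\tilde a \tilde b) - \rott(\tilde a) - \rott(\tilde b)$ for all pairs, and the latter is not a function of the one-variable rotation data alone (the whole Calegari--Walker business is about the indeterminacy of $\rott(\tilde a \tilde b)$ given $\rott(\tilde a)$ and $\rott(\tilde b)$). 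The missing ingredient is connectedness: the paper's Corollary \ref{matsumoto cor} observes that $F_{\gamma,\gamma'}(\rho,\nu) := \tau(\rho(\gamma),\rho(\gamma')) - \tau(\nu(\gamma),\nu(\gamma'))$ is continuous on $X \times X$, is integer-valued once $\rot_\gamma$ is known to be constant on $X$ (because $\tau \equiv \rot(ab) - \rot(a) - \rot(b) \bmod \Z$), and vanishes on the diagonal; hence it vanishes identically on the connected set $X \times X$. Your proposal never invokes connectedness in this part of the argument, so the verification you say is needed would simply fail without it.

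On the easier inclusion, your direct interpolation of $h$ to the identity is workable in spirit, but be aware that ``semi-conjugacy class'' here means the equivalence class under the relation \emph{generated} by semi-conjugacy, which is not symmetric. The paper handles this by first invoking the Calegari--Dunfield criterion (Proposition \ref{CD prop}) to replace the pair $(\rho_1,\rho_2)$ by a common ``source'' $\rho$ with monotone quotient maps $h_i \circ \rho = \rho_i \circ h_i$, and then conjugates $\rho$ along a path of genuine homeomorphisms $\hat h_t \to h_i$ to reach $\rho_i$. Your sketch, which implicitly assumes one member of the pair semi-conjugates directly onto the other, would need this reduction (or an equivalent one) to cover an arbitrary pair in the same class. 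Also, the minimality of the Fuchsian/$\PSLk$ action plays no role in the paper's proof and is a red herring here.
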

\noindent The definition of semi-conjugacy class is given Section \ref{semiconj sec}.

Translated into the language of foliations, Theorem \ref{semiconj thm} says in particular that the condition of the transverse foliation on the flat circle bundle associated to $\rho$ being semi-conjugate to an Anosov foliation is an open condition -- a foliation $C^0$ close to such will still be semi-conjugate to an Anosov foliation.  Compare Theorem 9.5 in \cite{Bowden}. 

\boldhead{Outline}
We begin with some background on rotation numbers and the work of Calegari and Walker in \cite{CW}.  This leads us to a definition of the Euler number in the language of rotation numbers, the Milnor-Wood inequality, and a description of the dynamics of representations into $\PSL(2,\R)$ and $\PSLk$ with maximal Euler number.  
\smallskip 

In Section \ref{proof strategy sec} we outline our proof strategy for Theorem \ref{rot rig thm}, motivating it by giving the proof of a toy case.   Section \ref{CW sec} consists of a detailed study of rotation numbers of products of homeomorphisms, with the algorithm of Calegari--Walker as our main tool.  We focus on examples that will later play a role in the proof of Theorem \ref{rot rig thm}. \smallskip

The proof of Theorem \ref{rot rig thm} is carried out in Section \ref{rot pf sec}.  
Section \ref{technical subsec} is somewhat technical, and a reader interested in only the broad flavor of the proof of our main theorems may wish to skip the proofs here on a first reading.  However, the reader with an interest in rotation numbers as a tool for parametrizing or studying representation spaces should find that Section \ref{technical subsec} contains some interesting techniques.   Section \ref{comm product subsec} also uses the techniques developed in Section \ref{CW sec} to study rotation numbers of products of homeomorphisms, but the proofs here are much quicker.  

In Section \ref{rot rig subsec} we prove a narrower form of rotation rigidity.  Extending this to the general result requires an \emph{Euclidean algorithm for rotation numbers of commutators}, which we develop in Section \ref{Euc alg subsec}.  The reader may (again) either find this \emph{Euclidean algorithm} technique to be of independent interest, or may choose to skip it on a first reading.\smallskip

Finally, in Section \ref{main pf sec} we deduce Theorem  \ref{main thm}  from Theorem \ref{rot rig thm} using a trick of Matsumoto, and in Section \ref{semiconj sec} we discuss semi-conjugacy and derive Theorem \ref{semiconj thm} from our earlier work using results of Matsumoto and Ghys.    Section \ref{sharpness sec} gives evidence for (as well as a conjecture on) the sharpness of our main theorems.

\boldhead{Acknowledgements}
The author thanks Danny Calegari for suggesting that rotation numbers might distinguish components of $\Hom(\Gamma_g, \Homeo_+(S^1))$, and explaining the idea of rotation numbers as ``coordinates" on representation spaces.  We are also grateful to Jonathan Bowden, Benson Farb, Shigenori Matsumoto, and Alden Walker for many helpful conversations and suggestions regarding this work.

\section{Background} \label{background sec}

\subsection{Rotation numbers} \label{rot number subsec}
Let $\Homeo_\Z(\R)$ denote the group of orientation-preserving homeomorphisms of $\R$ that commute with integral translations.  

\begin{definition}
Consider $S^1$ as $\R/\Z$, and let $x \in S^1$.  The  $\R/\Z$-valued \emph{rotation number} of an element $g \in \Homeo_+(S^1)$ is given by 
$$\rot(g) := \lim \limits_{n \to \infty} \frac{\tilde{g}^n(x)}{n} \, \mod \Z$$  
where $\tilde{g}$ is any lift of $g$ to $\Homeo_\Z(\R)$.
\end{definition} 
This limit always exists, and is independent of the choice of lift $\tilde{g}$ and choice of point $x \in S^1$.  A good exposition of the basic theory can be found in \cite{Navas} or \cite{Ghys groups acting}.   One easy fact that we will make use of is that a homeomorphism has a periodic point of period $k$ if and only if it has a rotation number of the form $m/k$ for some $m \in \Z$.  

We can define a $\Z$-valued \emph{lifted rotation number} (often also called ``translation number") for elements of $\Homeo_\Z(\R)$ in the same way.
\begin{definition} 
Let $\tilde{g} \in \Homeo_\Z(\R)$ and $x \in \R$.  The \emph{lifted rotation number} $\rott(\tilde{g})$ is given by
$$\rott(\tilde{g}) := \lim \limits_{n \to \infty} \frac{\tilde{g}^n(x)}{n} $$
\end{definition}

Again, the limit always exists, is finite, and is independent of the choice of point $x$.   
  
\boldhead{Lifted commutators}
The rotation number and lifted rotation number are closely related.  If $g \in \Homeo_+(S^1)$, and $\tilde{g}$ is a lift of $g$ to $\Homeo_\Z(\R)$, then $\rot(g) \equiv \rott(\tilde{g})  \mod \Z$, and different choices of lifts of $g$ change the value of $\rott(\tilde{g})$ by an integer.
However, for a \emph{commutator} $g = [a,b] : = aba^{-1}b^{-1}$, the homeomorphism $[\tilde{a}, \tilde{b}] \in \Homeo_\Z(\R)$ is independent of choice of lifts $\tilde{a}$ and $\tilde{b}$ of $a$ and $b$.  We will henceforth use the notation $\rott[a,b]: = \rott([\tilde{a}, \tilde{b}])$ to denote its lifted rotation number, and refer to $[\tilde{a}, \tilde{b}]$ as a \emph{lifted commutator}.  Similarly, for a product of commutators we define $\rott \left( \prod[a_i, b_i] \right) := \rott(\prod [\tilde{a_i}, \tilde{b_i}])$; this also independent of the choice of lifts. 
\bigskip

Both $\rot$ and $\rott$ are continuous with respect to the uniform norms on $\Homeo_+(S^1)$ and $\Homeo_\Z(\R)$, are invariant under (semi-)conjugacy, and are homomorphisms when restricted to cyclic subgroups (i.e. $\rott(g^n) = n\rott(g)$).  However, $\rot$ and $\rott$ are \emph{not} homomorphisms in general -- in fact it is easy to produce examples of elements of $\Homeo_\Z(\R)$ with lifted rotation number zero whose product has lifted rotation number 1. 

In \cite{CW}, Calegari and Walker develop an approach to studying the possible values of $\rott(w)$ for a word $w$ in the free semigroup generated by $\tilde{a}$ and $\tilde{b}$.  In particular, they give an algorithm to determine the maximum value of $\rott(w)$ given the lifted rotation numbers of $\tilde{a}$ and $\tilde{b}$.   We will use an extension of this algorithm later in Sections \ref{CW sec} and  \ref{rot pf sec}.  For now, the main result that we need is the following.  

\begin{theorem}[Theorem 3.9 in \cite{CW}] \label{CW thm}
Suppose that $\rott(\tilde{a}) = r$ and $\rott(\tilde{b}) = s$ for some $r, s \geq 0$.  Then the maximum possible value of $\rott(\tilde{a}\tilde{b})$ is given by 
$$\sup \limits_{p_1/q\leq r, \,\, p_2/q \leq s} \frac{p_1 + p_2 + 1}{q}$$
where $p_i \geq 0$ and $q > 0$ are integers.  
\end{theorem}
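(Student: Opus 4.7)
The plan is twofold: establish the upper bound $\rott(\tilde{a}\tilde{b}) \leq (\lfloor qr\rfloor + \lfloor qs\rfloor + 1)/q$ for every positive integer $q$, and construct examples realizing (or arbitrarily closely approaching) each value $(p_1+p_2+1)/q$ with $p_1/q \leq r$ and $p_2/q \leq s$.

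For the upper bound the main tool is the classical characterization
$$\rott(\tilde{g}) > p/q \iff \tilde{g}^q(x) > x + p \text{ for all } x \in \R,$$
together with the estimate $\min_{x\in\R}(\tilde{g}^n(x) - x) \leq n\rott(\tilde{g}) \leq \max_{x\in\R}(\tilde{g}^n(x) - x)$. I would fix $q$ and try to produce a point $x_0$ with $(\tilde{a}\tilde{b})^q(x_0) - x_0 \leq \lfloor qr\rfloor + \lfloor qs\rfloor + 1$, which via the characterization above immediately gives the bound. To find such $x_0$, I would track the $2q$ intermediate points $y_0=x_0$, $y_1=\tilde{b}(y_0)$, $y_2=\tilde{a}(y_1)$, \ldots, $y_{2q}=(\tilde{a}\tilde{b})^q(x_0)$, and split the net displacement $y_{2q} - y_0$ into $\tilde{a}$-contributions and $\tilde{b}$-contributions. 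Using monotonicity of $\tilde{a}$, $\tilde{b}$ together with the minimum-displacement information coming from $\rott(\tilde{a}) = r$ and $\rott(\tilde{b}) = s$, I would bound these separately; a pigeonhole-style choice of $x_0$ relative to the (approximate) period-$q$ orbits of $\tilde{a}$ and $\tilde{b}$ (after reducing to the case of rational $r, s$ by approximation and continuity of $\rott$) keeps the total error down to a single integer unit, which is the source of the $+1$.

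For the lower bound I would give an explicit construction. Given $p_1, p_2, q$ with $p_1/q \leq r$ and $p_2/q \leq s$, build $\tilde{a} \in \Homeo_\Z(\R)$ with $\rott(\tilde{a}) = r$ having a period-$q$ orbit on which it translates by $p_1$, chosen so that $\tilde{a}$ behaves like a near-rotation elsewhere; similarly build $\tilde{b}$. Align the two marked orbits on $\R$ so that the composition $\tilde{a}\tilde{b}$ has a period-$q$ orbit translating by exactly $p_1 + p_2 + 1$: the idea is that each application of $\tilde{a}$ in the product shifts by roughly $p_1/q$ units and each application of $\tilde{b}$ by roughly $p_2/q$, but the interleaving can be arranged so that the orbit of $\tilde{a}\tilde{b}$ ``skips over'' one extra fundamental domain of $\Z \subset \R$, contributing the $+1$. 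Piecewise-linear models suffice.

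The main obstacle is the combinatorial content of the upper bound: the hypotheses $\rott(\tilde{a}) \leq r$ and $\rott(\tilde{b}) \leq s$ are essentially pointwise statements about the individual iterates $\tilde{a}^q$ and $\tilde{b}^q$, but the conclusion concerns the interleaved product $(\tilde{a}\tilde{b})^q$. Showing that alternating $\tilde{a}$ and $\tilde{b}$ cannot produce more than one extra unit of translation beyond what the individual rotation numbers force is the essential content of the theorem, and demands the careful orbit-and-interval analysis that constitutes the ``ziggurat'' combinatorics of Calegari--Walker.
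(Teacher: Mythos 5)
Your upper-bound claim is false as stated, and this is not a repairable slip but a misreading of what needs to be proved. You propose to show $\rott(\tilde{a}\tilde{b}) \leq (\lfloor qr\rfloor + \lfloor qs\rfloor + 1)/q$ for \emph{every} positive integer $q$, which would give $\rott(\tilde{a}\tilde{b}) \leq \inf_q (\lfloor qr\rfloor + \lfloor qs\rfloor + 1)/q$. But the theorem asserts the maximum is the \emph{supremum} of this quantity over $q$, and the gap between inf and sup is real. Concretely, take $r = s = 0$; the paper itself (end of Section \ref{rot number subsec}) notes there exist $\tilde{a}, \tilde{b}$ with lifted rotation number $0$ whose product has lifted rotation number $1$. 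For such a pair and $q = 2$ your bound demands $\rott(\tilde{a}\tilde{b}) \leq 1/2$, and the device you suggest --- finding $x_0$ with $(\tilde{a}\tilde{b})^2(x_0) - x_0 \leq 1$ --- cannot work, because $|g^n(x) - x - n\rott(g)| < 1$ for all $x$, so $\rott(\tilde{a}\tilde{b})=1$ forces $(\tilde{a}\tilde{b})^2(x) - x > 1$ everywhere. Your plan is also internally inconsistent: the lower-bound half of your plan aims to realize the value $(p_1+p_2+1)/q$ for a favorable $q$, which immediately contradicts the per-$q$ upper bound at any other $q$ giving a smaller ratio. What one must actually prove is that there exists a \emph{single} $q$ (depending on $\tilde{a},\tilde{b}$) for which the inequality holds, and that $q$ is found through the interleaving combinatorics of periodic orbits, not by an averaging argument over an arbitrary $q$.

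The lower-bound construction has a separate error. You want $\tilde{a}$ with $\rott(\tilde{a}) = r$ carrying a period-$q$ orbit on which $\tilde{a}$ translates by $p_1$, for an arbitrary $p_1$ with $p_1/q \leq r$. But a lift carrying such an orbit necessarily has $\rott(\tilde{a}) = p_1/q$; if $p_1/q < r$ this contradicts $\rott(\tilde{a}) = r$, and if $r$ is irrational no periodic orbit exists at all. The Calegari--Walker approach, sketched in the paper as Proposition \ref{algorithm prop}, instead fixes periodic orbits whose periods are the actual denominators of $r$ and $s$ (or approximates in the irrational case), and the parameter $q$ in the formula emerges from the cyclic combinatorial pattern in which the two orbits interlace on the circle --- it is not a free parameter of the construction, and the ``$+1$'' comes from the dynamical system on the finite point set, not from a global pigeonhole over $\R$. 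Your proposal does not recover this mechanism, and the ``main obstacle'' you name in the final paragraph is in fact unaddressed rather than sketched.
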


Calegari and Walker also show that there are strong restrictions on the possible values of the lifted rotation number of a commutator.  
The following result will play a crucial role in our proofs. 
\begin{lemma}[Example 4.9 in \cite{CW}] \label{comm lemma}
Let $a, b \in \Homeo_+(S^1)$.  The following hold. 
\begin{enumerate}[i)]
\item If $\rot(a) \notin \Q$ or $\rot(b) \notin \Q$, then $\rott [a,b]= 0$.
\item If $\rot(a) = p/q$ or $\rot(b) = p/q$, where $p/q \in \Q$ is in lowest terms, then $|\rott [a,b] | \leq 1/q$.
\end{enumerate}
\end{lemma}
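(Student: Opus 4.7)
The plan is to derive the commutator bound from Theorem \ref{CW thm} (Calegari--Walker) applied to a two-letter product whose factors have opposite lifted rotation numbers. Without loss of generality the hypothesis concerns $b$, since $\rott[a,b] = -\rott[b,a]$ and the conclusions are symmetric in sign. Set $\tilde{c} := \tilde{a}\tilde{b}\tilde{a}^{-1}$; by conjugation-invariance of $\rott$, we have $\rott(\tilde{c}) = \rott(\tilde{b}) =: s$, and in case (ii) I choose the lift of $b$ so that $s = p/q$ exactly. The lifted commutator then factors as $[\tilde{a},\tilde{b}] = \tilde{c}\,\tilde{b}^{-1}$, a product of two elements with lifted rotation numbers $s$ and $-s$.

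Since Theorem \ref{CW thm} assumes nonnegative rotation numbers, I first translate both factors by $T_N$, integer translation by some $N > |s|$. Because $T_N$ is central in $\Homeo_\Z(\R)$, the new product equals $T_{2N}\,[\tilde{a},\tilde{b}]$, so any upper bound on its lifted rotation number transfers to an upper bound on $\rott[\tilde{a},\tilde{b}]$. The new factors have positive lifted rotation numbers $N+s$ and $N-s$, so Theorem \ref{CW thm} yields
$$\rott[\tilde{a},\tilde{b}] + 2N \;\le\; \sup_{n \ge 1}\frac{\lfloor n(N+s)\rfloor + \lfloor n(N-s)\rfloor + 1}{n}.$$

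The main computation is then an elementary floor identity. Since $n(N+s)+n(N-s) = 2nN \in \Z$, the two floors sum to $2nN$ when $ns \in \Z$ and to $2nN-1$ otherwise. In case (i), $s \notin \Q$ forces $ns \notin \Z$ for every $n$, so every term in the sup equals $2N$, and $\rott[\tilde{a},\tilde{b}] \le 0$. In case (ii), with $s = p/q$ in lowest terms, $ns \in \Z$ iff $q \mid n$, so the sup equals $\max_{q \mid n}(2N + 1/n) = 2N + 1/q$, giving $\rott[\tilde{a},\tilde{b}] \le 1/q$.

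For the matching lower bound, I apply the identical argument to $[\tilde{a},\tilde{b}]^{-1} = [\tilde{b},\tilde{a}]$, factored instead as $\tilde{b}\cdot(\tilde{a}\tilde{b}^{-1}\tilde{a}^{-1})$: this is again a product of two elements with lifted rotation numbers $s$ and $-s$, so the same shift-and-apply-CW step yields $\rott[\tilde{b},\tilde{a}] \le 0$ or $\le 1/q$, i.e.\ $\rott[\tilde{a},\tilde{b}] \ge 0$ or $\ge -1/q$ respectively. The only genuine step is the floor-sum case analysis above; the remainder is a natural combination of conjugation-invariance of $\rott$ with a central-translation trick to legitimize the positivity hypothesis of Theorem \ref{CW thm}.
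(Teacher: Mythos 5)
Your argument is correct. Note that the paper itself offers no proof of this lemma — it is quoted from Example 4.9 of \cite{CW} — so the honest comparison is between your derivation and the implicit one behind that citation. You have supplied a clean, self-contained proof from the \emph{other} cited result, Theorem \ref{CW thm}: the factorization $[\tilde a,\tilde b]=(\tilde a\tilde b\tilde a^{-1})\cdot\tilde b^{-1}$ into two factors with lifted rotation numbers $s$ and $-s$, the central shift by $T^N$ to enter the nonnegative regime that Theorem \ref{CW thm} requires, and the floor identity $\lfloor x\rfloor+\lfloor y\rfloor\in\{x+y,\,x+y-1\}$ for $x+y\in\Z$ (with the larger value exactly when $x\in\Z$) are all used correctly, and the repetition on $[\tilde b,\tilde a]$ gives the matching lower bound. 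Two cosmetic points: you could skip the ``WLOG the hypothesis is on $b$'' step entirely by switching to the symmetric decomposition $[\tilde a,\tilde b]=\tilde a\cdot(\tilde b\tilde a^{-1}\tilde b^{-1})$ when the hypothesis is on $a$; and the normalization of the lift so that $s=p/q$ exactly is unnecessary, since replacing $s$ by $s+m$ for $m\in\Z$ changes neither the set of $n$ with $ns\in\Z$ nor the final bound after absorbing $m$ into $N$. Neither remark affects correctness — the proof is complete and is the natural way to derive the lemma from the stated Calegari–Walker theorem.
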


We conclude this section with an elementary calculation that we will also use later.    
\begin{lemma} \label{additivity lemma}
Let $\tilde{f}$ and $\tilde{g} \in \Homeo_\Z(\R)$ satisfy $\tilde{f}\tilde{g} = T^n$ where $T^n$ denotes the translation $T^n(x) := x+n$, for $n \in \Z$.   Then $\rott(\tilde{f}) + \rott(\tilde{g}) = n$.  
\end{lemma}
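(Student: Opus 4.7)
The plan is to exploit two immediate consequences of the hypothesis $\tilde{f}\tilde{g} = T^n$. First, because $T^n$ lies in the center of $\Homeo_\Z(\R)$, the relation $\tilde{f} = T^n \tilde{g}^{-1}$ yields $\tilde{g}\tilde{f} = \tilde{g} T^n \tilde{g}^{-1} = T^n = \tilde{f}\tilde{g}$, so $\tilde{f}$ and $\tilde{g}$ commute. Second, $\rott(T^n) = n$ is immediate from the definition, since $(T^n)^k(x) = x + nk$ for every $x$. So the lemma reduces to the claim that $\rott(\tilde{f}\tilde{g}) = \rott(\tilde{f}) + \rott(\tilde{g})$, i.e.\ that $\rott$ is additive on commuting pairs.

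For the additivity claim I would invoke the standard displacement estimate $|\tilde{h}^k(x) - x - k\rott(\tilde{h})| \leq 1$, valid for any $\tilde{h} \in \Homeo_\Z(\R)$, $x \in \R$, and $k \geq 1$ (this is a routine consequence of the fact that $\tilde{h}$ commutes with integer translations, so the map $x \mapsto \tilde{h}(x) - x$ descends to a continuous function on $S^1$). Since $\tilde{f}$ and $\tilde{g}$ commute, $(\tilde{f}\tilde{g})^k = \tilde{f}^k\tilde{g}^k$. Applying the estimate to $\tilde{g}^k$ at $x$, and then to $\tilde{f}^k$ at the point $\tilde{g}^k(x)$, gives
\[
\bigl|(\tilde{f}\tilde{g})^k(x) - x - k\bigl(\rott(\tilde{f}) + \rott(\tilde{g})\bigr)\bigr| \leq 2.
\]
Dividing by $k$ and letting $k \to \infty$ yields $\rott(\tilde{f}\tilde{g}) = \rott(\tilde{f}) + \rott(\tilde{g})$, and combining with $\rott(\tilde{f}\tilde{g}) = \rott(T^n) = n$ completes the proof.

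There is no real obstacle here; the entire argument is elementary once one notices that $\tilde{f}$ and $\tilde{g}$ necessarily commute. The only mildly technical point is the displacement bound for iterates, which is the standard ingredient behind the well-known fact that $\rott$ restricts to a homomorphism on abelian subgroups of $\Homeo_\Z(\R)$.
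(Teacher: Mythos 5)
Your proof is correct and takes essentially the same approach as the paper: both reduce the claim to additivity of $\rott$ on a commuting pair (you use the pair $(\tilde f,\tilde g)$ after noting that $T^n$ central forces them to commute; the paper writes $\tilde f = T^n\tilde g^{-1}$ and uses the commuting pair $(T^n,\tilde g^{-1})$). You merely spell out the standard displacement bound behind the additivity step, which the paper takes for granted.
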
  

\begin{proof} 
That $\tilde{f}\tilde{g} = T^n$ implies that $\tilde{f} = T^n \circ \tilde{g}^{-1}$, hence 
$$\rott(\tilde{f}) = \rott(T^n \circ \tilde{g}^{-1})$$
Since $T^n$ is a translation commuting with $\tilde{g}^{-1}$, it follows from the definition of rotation number that $\rott(T^n \circ \tilde{g}^{-1}) = \rott(T^n) + \rott(\tilde{g}^{-1}) = n - \rott(\tilde{g})$,
as claimed.  

\end{proof} 

\subsection{The Euler number} \label{Euler subsec}

Classically, the \emph{Euler number} of a representation $\rho: \Gamma_g \to \Homeo_+(S^1)$ is defined in terms of characteristic classes --  it is the result of evaluating the pullback $\rho^*(e_\Z) \in H^2(\Gamma_g; \Z)$ of the canonical Euler class $e_\Z \in H^2(\Homeo_+(S^1); \Z)$ on the fundamental class $[\Gamma_g] \in H_2(\Gamma_g; \Z)$.   However, we will use the following alternative definition which emphasizes the relationship of the Euler number to the lifted rotation number.  This idea is originally due to Milnor in \cite{Milnor}, and is made explicit in \cite{Wood}.  

\begin{definition}[Euler number] \label{euler def}
Let $\{a_1, b_1, ... a_g, b_g\}$ be a \emph{standard set of generators} for $\Gamma_g$, meaning that $\Gamma_g$ has the presentation 
$$\Gamma_g  = \langle a_1, b_1, ... a_g, b_g | \,\,[a_1, b_1][a_2, b_2]...[a_g, b_g]\rangle.$$
Let $\rho: \Gamma_g \to \Homeo_+(S^1)$ be a representation.  We define the \emph{Euler number} $\euler(\rho)$ by 
$$\euler(\rho):= \rott \left( [\rho(a_1), \rho(b_1)][\rho(a_2), \rho(b_2)]...[\rho(a_g), \rho(b_g)] \right).$$
\end{definition}
Continuity of $\rott$ implies that $\euler$ is a continuous function on $\Hom(\Gamma_g, G)$ for any subgroup $G \subset \Homeo_+(S^1)$.  Furthermore, that $[a_1, b_1][a_2, b_2]...[a_g, b_g]$ is the identity in $\Gamma_g$ implies that the product of the lifted commutators
$[\tilde{\rho}(a_1), \tilde{\rho}(b_1)]...[\tilde{\rho}(a_g), \tilde{\rho}(b_g)]$ 
is an integer translation -- a lift of the identity on $S^1$ -- hence $\euler(\rho)$ is integer valued.  It follows then that $\euler(\rho)$ is constant on connected components of $\Hom(\Gamma_g, G)$.   

A remark on notation is in order.  Here, and in the sequel we use the notation $\tilde{\rho}(a)$ rather than $\widetilde{\rho(a)}$ to denote a lift of $\rho(a)$ to $\Homeo_\Z(\R)$ -- this is \emph{not} to suggest that we have lifted the representation $\rho$ to some representation $\tilde{\rho}$, but only to avoid the cumbersome notation $\widetilde{\rho(a)}$.

\boldhead{The Milnor-Wood inequality}
The \emph{Milnor-Wood inequality} implies that $\euler$ takes only finitely many values on $\Hom(\Gamma_g, G)$ for any $G \subset \Homeo_+(S^1)$. 
We recall the statement here. 
\begin{theorem}[Milnor \cite{Milnor}, Wood \cite{Wood}]
Let $\rho: \Gamma_g \to \Homeo_+(S^1)$ be a representation.  Then $| \euler(\rho) |\leq 2g-2$.  Furthermore, each integer value $n \in [-2g+2, 2g-2]$ is realized as $\euler(\rho)$ for some representation $\rho$.  
\end{theorem}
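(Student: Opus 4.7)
The proof has two halves --- the upper bound and the existence statement --- and I sketch both in turn. For the upper bound, my plan is as follows. Theorem \ref{CW thm}, applied both to $\tilde f \tilde g$ and to $\tilde g^{-1} \tilde f^{-1}$ (reducing first to the case $\rott(\tilde f), \rott(\tilde g) \geq 0$ via integer translations), yields the quasi-morphism estimate $|\rott(\tilde f \tilde g) - \rott(\tilde f) - \rott(\tilde g)| \leq 1$ for all $\tilde f, \tilde g \in \Homeo_\Z(\R)$. Combined with the commutator bound $|\rott([\tilde a, \tilde b])| \leq 1$ from Lemma \ref{comm lemma}, a straightforward induction on $k$ gives $|\rott(\prod_{i=1}^k [\tilde a_i, \tilde b_i])| \leq 2k - 1$ for any collection of $k$ lifted commutators (using $k$ commutator bounds and $k-1$ applications of the quasi-morphism inequality).

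To reach the sharp bound $2g-2$, I would apply Lemma \ref{additivity lemma}. Setting $P := \prod_{i=1}^{g-1} [\tilde\rho(a_i), \tilde\rho(b_i)]$ and $c_g := [\tilde\rho(a_g), \tilde\rho(b_g)]$, the surface group relation forces $P \cdot c_g = T^{\euler(\rho)}$ in $\Homeo_\Z(\R)$, so Lemma \ref{additivity lemma} yields $\rott(P) + \rott(c_g) = \euler(\rho)$. Using the preliminary bound with $k = g-1$ for $P$ and the commutator bound for $c_g$, I conclude
$$|\euler(\rho)| \leq |\rott(P)| + |\rott(c_g)| \leq \bigl(2(g-1) - 1\bigr) + 1 = 2g - 2.$$

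For the existence half, the extremal values $\pm(2g-2)$ are realized by Fuchsian representations --- holonomies of hyperbolic structures on $\Sigma_g$ landing in $\PSL(2,\R) \subset \Homeo_+(S^1)$. For intermediate integers $n$, I would use a cut-and-paste construction: decompose $\Sigma_g$ along a separating simple closed curve into subsurfaces of smaller genus, choose representations of each piece with prescribed Euler numbers (summing to $n$) and matching boundary rotation numbers, and assemble to realize every allowed integer. The main obstacle is establishing the tight quasi-morphism defect $1$: a naive displacement estimate gives only defect $2$, and the sharper constant $1$ (which Theorem \ref{CW thm} supplies via the symmetry trick above) is the technical linchpin that makes the chain of inequalities land at exactly $2g-2$ rather than the weaker $2g-1$.
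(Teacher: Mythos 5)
The paper cites this result to Milnor and Wood and does not prove it, so there is no internal argument of the paper to compare against; what you have written is a self-contained derivation using the paper's \emph{other} stated tools, which is a different route from both Milnor's original curvature argument and Wood's combinatorial one. Your proof of the inequality $|\euler(\rho)| \leq 2g-2$ is correct. The quasi-morphism bound $|\rott(\tilde f\tilde g) - \rott(\tilde f) - \rott(\tilde g)| \leq 1$ does follow from Theorem \ref{CW thm}: the supremum there is at most $r+s+1$, the defect $D(\tilde f,\tilde g) := \rott(\tilde f\tilde g)-\rott(\tilde f)-\rott(\tilde g)$ is unchanged when lifts are pre-composed with integer translations (so one may normalize to nonnegative $\rott$), and the identity $D(\tilde f,\tilde g) = -D(\tilde g^{-1},\tilde f^{-1})$ gives the lower bound. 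Combined with $|\rott[a,b]|\leq 1$ from Lemma \ref{comm lemma}, the induction to $|\rott(\prod_{i=1}^k[\tilde a_i,\tilde b_i])|\leq 2k-1$ is sound, and splitting off the last commutator with Lemma \ref{additivity lemma} correctly lands at $2g-2$. This is a nice illustration of the Calegari--Walker machinery doing real work, and it is the same trick (bound $g-1$ commutators, then use additivity for the last one) that powers Lemma \ref{SL2 lemma} in the paper.

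The existence half, by contrast, is underspecified in a way that matters. Gluing Fuchsian or trivial pieces along a separating curve with \emph{trivial} boundary holonomy only produces even relative contributions, and the even simpler route of pulling back a Fuchsian representation along a quotient map $\Gamma_g \twoheadrightarrow \Gamma_h$ only realizes $\euler \in \{0,\pm 2,\ldots,\pm(2g-2)\}$. To hit the odd integers by cut-and-paste you must allow \emph{nontrivial} boundary holonomy on the pieces so that their relative Euler numbers are non-integral (with fractional parts cancelling), and you must actually exhibit such pieces; this is usually done via hyperbolic cone structures and is not ``straightforward assembly.'' In the context of this paper the cleanest fix is to observe that Theorem \ref{goldman thm} with $k=1$ already asserts that every integer in $[-(2g-2),2g-2]$ is attained by a representation into $\PSL(2,\R)$, so the existence half can simply be cited.
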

In fact, each integer $n \in [-2g+2, 2g-2]$ is realized as $\euler(\rho)$ for some representation $\rho: \Gamma_g \to \PSL(2,\R)$.  This gives a \emph{lower bound} on the number of components of $\Hom(\Gamma_g, G)$
 whenever $G$ is a subgroup of $\Homeo_+(S^1)$ containing $\PSL(2,\R)$ -- there are at least $4g-3$ connected components of $\Hom(\Gamma_g, G)$, one for each value of $\euler(\rho)$.  

\subsection{Maximal representations, cyclic covers} \label{max reps subsec}

Representations $\rho: \Gamma_g \to \PSL(2,\R)$ such that $e(\rho)$ obtains the maximal value of $\pm(2g-2)$ have a particularly nice structure, as shown by the following theorem of Goldman.

\begin{theorem}[Goldman, \cite{Goldman}] \label{fuchsian thm}
A representation $\rho: \Gamma_g \to \PSL(2,\R)$ has $\euler(\rho) = \pm(2g-2)$ if and only if it is faithful and with image a Fuchsian group.  
\end{theorem}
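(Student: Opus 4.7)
The plan is to prove the two implications of this classical theorem separately, with the converse carrying most of the weight. First, suppose $\rho : \Gamma_g \to \PSL(2,\R)$ is faithful with discrete image. Then $\rho(\Gamma_g)$ acts freely and properly discontinuously on $\H^2$, the quotient $\H^2 / \rho(\Gamma_g)$ is homeomorphic to $\Sigma_g$, and the flat $S^1$-bundle $E_\rho \to \Sigma_g$ associated to the boundary action of $\PSL(2,\R)$ on $S^1 = \partial \H^2$ is isomorphic, as an oriented circle bundle, to the projectivized tangent bundle $PT\Sigma_g$. By Poincar\'e--Hopf (or Gauss--Bonnet), the Euler class of $PT\Sigma_g$ evaluates on $[\Sigma_g]$ to $\chi(\Sigma_g) = 2-2g$, so $|\euler(\rho)| = 2g-2$, with sign determined by the chosen orientation conventions.

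For the converse I would compute $\euler(\rho)$ as a hyperbolic area. Since $\H^2$ is contractible, there exists a $\rho$-equivariant continuous map $\Phi : \widetilde \Sigma_g \to \H^2$; fix a fundamental $4g$-gon $P$ for $\Gamma_g$ in $\widetilde \Sigma_g$ and straighten $\Phi$ along $\partial P$ so that $\Phi(P)$ becomes a (possibly non-convex, possibly self-overlapping) geodesic $4g$-gon in $\H^2$. Milnor's formula then expresses $2 \pi \cdot \euler(\rho)$ as the signed hyperbolic area of $\Phi(P)$. Gauss--Bonnet applied to hyperbolic polygons, combined with the constraint that the $4g$ vertices of $P$ are all identified in $\Sigma_g$ (so the interior angles of $\Phi(P)$ sum to $2\pi$), gives $|\mathrm{area}(\Phi(P))| \leq (4g-4)\pi$, which both recovers the Milnor--Wood inequality and shows that equality forces $\Phi(P)$ to be an embedded convex polygon with interior angles summing to exactly $2\pi$. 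Extending $\Phi$ equivariantly, the collection $\{ \rho(\gamma)\cdot\Phi(P) : \gamma \in \Gamma_g \}$ then tiles $\H^2$, so $\Phi$ is a $\rho$-equivariant homeomorphism $\widetilde \Sigma_g \to \H^2$; in particular $\rho$ is injective and $\rho(\Gamma_g)$ is a Fuchsian group.

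The hardest step will be the rigidity extraction from saturation of the area bound: a priori $\Phi$ may fold or have critical points on the interior of $P$, and its restriction to $\partial P$ may trace a self-intersecting polygon whose signed area is delicate to analyze. Ruling out these pathologies under the equality $|\euler(\rho)| = 2g-2$ is the geometric heart of Goldman's theorem and requires a careful convexity or monotonicity argument. An alternative and arguably cleaner route uses bounded cohomology: the bounded Euler class of $\PSL(2,\R)$ has Gromov seminorm $1/2$ and $[\Sigma_g]$ has $\ell^1$-seminorm $4g-4$, so their pairing yields $|\euler(\rho)| \leq 2g-2$; saturation forces the pullback of the extremal bounded cocycle to be extremal itself, which by results of Matsumoto and Ghys implies that $\rho$ is semi-conjugate to a Fuchsian representation. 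Inside $\PSL(2,\R)$ this semi-conjugacy upgrades to genuine conjugacy, completing the proof.
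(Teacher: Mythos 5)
The paper does not prove this theorem; it quotes it from Goldman and uses it as a black box (in Proposition~\ref{maximal reps prop}), so there is no in-paper proof to compare against.

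Your outline of the classical argument is broadly sound but has two real issues. In the forward direction, the flat bundle $E_\rho$ of a cocompact Fuchsian $\rho$ is the \emph{unit} tangent bundle $UT\Sigma_g$, not the projectivized tangent bundle: the $\PSL(2,\R)$-equivariant bijection $\H^2\times\partial\H^2 \to UT\H^2$ sending $(z,\xi)$ to the unit vector at $z$ pointing toward $\xi$ descends to an isomorphism $E_\rho\cong UT\Sigma_g$. The bundle $PT\Sigma_g$ is the fiberwise $\Z/2$-quotient of $UT\Sigma_g$, has Euler number $2\chi(\Sigma_g)=4-4g$, and in particular is not flat (it violates the Milnor--Wood inequality); your numerical conclusion $|\euler(\rho)|=2g-2$ is right only because $UT$, not $PT$, is the correct bundle.

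For the converse, you correctly flag that the equality analysis in the Gauss--Bonnet/Milnor area argument is the geometric crux --- controlling the straightened map's folds and the winding of the boundary polygon when the area bound is saturated is precisely where Goldman does real work, and your paragraph leaves it as an acknowledged gap. Your bounded-cohomology alternative is cleaner but has an \emph{unflagged} gap at the last clause: it is not automatic that a $\PSL(2,\R)$ representation in the semi-conjugacy class of a cocompact Fuchsian representation is itself cocompact Fuchsian. Faithfulness can be extracted (the semi-conjugating monotone surjection onto the minimal model forces $\ker\rho\subset\ker\rho_0=\{1\}$), but one must still rule out $\rho$ having an exceptional Cantor minimal set with the monotone map collapsing gaps, and then upgrade the resulting topological conjugacy to a genuine conjugacy in $\PSL(2,\R)$, i.e.\ establish discreteness. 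As written, neither route closes the argument, so this should be regarded as a proof sketch rather than a proof.
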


Using Theorem \ref{fuchsian thm} and basic properties of Fuchsian groups, one can easily show the following facts.  We leave the proofs to the reader.  

\begin{proposition} \label{maximal reps prop}
Let $\rho: \Gamma_g \to \PSL(2,\R)$ satisfy $\euler(\rho) = 2g-2$, and let $\{a_1, b_1, ... a_g, b_g\}$ be a standard generating set for $\Gamma_g$.  Then $\rho$ has the following properties.  
\begin{enumerate}[i)]
\item Every homeomorphism in the image of $\rho$ has a fixed point.
\item Each pair $(a_i, b_i)$ of generators has $\rott[\rho(a_i), \rho(b_i)] = 1$.  
In fact, there exists $x \in \R$ such that the lifted commutator satisfies $[\tilde{\rho}(a_i), \tilde{\rho}(b_i)](x) = x+1$.  
\item If $x_i \in S^1$ is any fixed point for the commutator $[\rho(a_i), \rho(b_i)]$, then the cyclic order of the points $x_i$ on $S^1$ is $x_1 < x_2 < ... < x_g$  (i.e. cyclically lexicographic).  
\end{enumerate}
\end{proposition}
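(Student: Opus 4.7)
For part (i), the plan is to use Theorem \ref{fuchsian thm} to conclude that $\rho$ is faithful with discrete image $\rho(\Gamma_g) \subset \PSL(2,\R)$, and that the quotient $\H^2/\rho(\Gamma_g)$ is homeomorphic to $\Sigma_g$. Since $\Gamma_g$ is torsion-free, $\rho(\Gamma_g)$ contains no elliptic elements; since the quotient is closed, it contains no parabolic elements. Thus every non-identity element is hyperbolic, and hyperbolic isometries fix the two endpoints of their axis on $\partial_\infty \H^2 = S^1$.

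For part (ii), I would first establish the two-sided bound $\rott[\tilde{\rho}(a_i), \tilde{\rho}(b_i)] \in \{-1, 0, 1\}$. The inequality $|\rott[\rho(a_i), \rho(b_i)]| \leq 1$ comes directly from Lemma \ref{comm lemma}(ii) with denominator $q = 1$, applied to $\rho(a_i)$ using part (i). Integrality follows from observing that $c_i := [\rho(a_i), \rho(b_i)]$ is itself a hyperbolic element in $\rho(\Gamma_g)$ (nontrivial because $\Gamma_g$ has trivial center and $\rho$ is faithful), so by (i) it fixes points on $S^1$, giving $\rot(c_i) = 0$ and hence $\rott[\tilde{\rho}(a_i), \tilde{\rho}(b_i)] \in \Z$.

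To pin the value down to $+1$, I would use a continuity-plus-connectedness argument. The map $\rho \mapsto \rott[\tilde{\rho}(a_i), \tilde{\rho}(b_i)]$ is continuous on $\Hom(\Gamma_g, \PSL(2,\R))$ and, by the previous paragraph, integer-valued on the Fuchsian locus with $\euler(\rho) = 2g-2$. Hence it is locally constant there. By Theorem \ref{fuchsian thm}, this locus is (up to conjugation) Teichm\"uller space, which is connected, so the value of $\rott[\tilde{\rho}(a_i), \tilde{\rho}(b_i)]$ is the same for every Fuchsian maximal $\rho$. It then suffices to compute this value on a single convenient model---e.g., the representation arising from a regular hyperbolic $4g$-gon---where one reads off that each lifted commutator rotation number equals $+1$. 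Once $\rott[\tilde{\rho}(a_i), \tilde{\rho}(b_i)] = 1$ is known, the existence of $x$ with $[\tilde{\rho}(a_i), \tilde{\rho}(b_i)](x) = x+1$ is immediate: the element $T^{-1}\cdot[\tilde{\rho}(a_i), \tilde{\rho}(b_i)]$ is another lift of $c_i$, now with $\rott = 0$, and any element of $\Homeo_\Z(\R)$ with lifted rotation number $0$ fixes some point of $\R$.

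For part (iii), the fixed points of $c_i$ on $S^1$ are precisely the endpoints of its axis in $\H^2$, and one wants to show these axes are arranged in cyclic order $1,2,\ldots,g$ around $\partial_\infty \H^2$. Choosing a standard fundamental $4g$-gon $P$ whose cyclic edge-labeling $a_1, b_1, a_1^{-1}, b_1^{-1}, \ldots, a_g, b_g, a_g^{-1}, b_g^{-1}$ realizes the relator, one checks that the axis of $c_i$ lies in the region of $\H^2$ separated from the vertex orbit by the $i$-th group of four consecutive edges, so the axes (and hence any choice of fixed-point endpoints) respect the cyclic order induced by the edge labeling.

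The main obstacle is the step in part (ii) that fixes the value of the locally constant integer invariant as $+1$. Everything else is either formal, a direct consequence of the bounds from Section \ref{rot number subsec}, or standard Fuchsian group theory; the substantive content lies in carrying out the model computation (equivalently, in identifying $\euler(\rho) = 2g-2$ as a genuine sum of $g$ commutator contributions of $+1$ each).
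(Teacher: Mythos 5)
The paper itself offers no proof of Proposition \ref{maximal reps prop}; it simply invokes Theorem \ref{fuchsian thm} and ``basic properties of Fuchsian groups'' and leaves the details to the reader, so there is no argument in the text to compare against. Taken on its own terms, your plan is sound and you have correctly identified where the real content lies (pinning the lifted commutator rotation number at exactly $+1$). Two comments. First, a small imprecision: in part (ii) you invoke triviality of the center of $\Gamma_g$ to conclude $[a_i,b_i]\neq 1$, but trivial center only rules out $a_i$ being central, not $a_i$ and $b_i$ failing to commute. What you actually need is that $a_i$ and $b_i$ generate a non-abelian subgroup; this follows, for instance, because they are linearly independent in $H_1(\Sigma_g;\Z)$ while commuting elements of a torsion-free Fuchsian group lie in a common cyclic subgroup. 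Second, the connectedness-of-Teichm\"uller-space argument you use to reduce to a single model is a legitimate shortcut, but it is somewhat heavier than necessary and still requires you to carry out a lift-tracking computation in the regular $4g$-gon, which you leave as a black box. A more self-contained route, closer to what the phrase ``basic properties of Fuchsian groups'' suggests, is the Milnor-type lemma that works for \emph{any} crossed hyperbolic pair at once: if $f,g \in \PSL(2,\R)$ are hyperbolic with linked fixed-point pairs on $S^1$ and intersection number $-1$, then one chooses the lifts $\tilde f$, $\tilde g$ fixing the corresponding lifted fixed points and directly checks, from the cyclic order $f^- < g^- < f^+ < g^+$, that $[\tilde f, \tilde g]$ translates some point by exactly $+1$. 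This computes the value for every Fuchsian $\rho$ simultaneously, avoids the connectedness step, and also hands you the stronger ``pointwise'' assertion of part (ii) without the extra remark at the end of your paragraph. Your sketches of parts (i) and (iii) are correct as stated; for (iii) it is worth being explicit that the relevant fact is that the two endpoints of the axis of each $[\rho(a_i),\rho(b_i)]$ form $g$ pairwise unlinked pairs in cyclic order, so that \emph{any} choice of one endpoint per pair respects the cyclic ordering.
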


An analogous result holds for representations $\rho$ with Euler number $-2g+2$.  Here the cyclic order of points will be reverse lexicographic, we will have $\rott[\rho(a_i), \rho(b_i)] = -1$ and the lifted commutator $[\tilde{\rho}(a_i), \tilde{\rho}(b_i)]$ will translate some point by $-1$.

Representations to $\PSLk$ with Euler number $\frac{2g-2}{k}$ have similar dynamical properties to those listed in Proposition \ref{maximal reps prop}.  These are described in the following proposition, which will serve as our starting point for the proof of Theorem \ref{rot rig thm}.  

\begin{proposition} \label{maximal k reps prop}
Let $k>0$ divide $2g-2$ and let $\rho: \Gamma_g \to \PSLk$ be a representation with Euler number $\frac{2g-2}{k}$.  Let $\{ a_1, b_1, ... a_g, b_g \}$ be a standard generating set for $\Gamma_g$.  The following properties hold. 
\begin{enumerate}[i)]
\item  For all $\gamma \in \Gamma_g$, there exists $m_\gamma \in \Z$ such that $\rot(\rho(\gamma)) = m_\gamma/k$ .
\item Each pair $(a_i, b_i)$ of generators has $\rot[\rho(a_i), \rho(b_i)]= 1/k$ and $\rott[\rho(a_i), \rho(b_i)] = 1/k$. 

\item Let $X_i$ be a periodic orbit for $[\rho(a_i), \rho(b_i)]$.   We may choose points $x_i^0 \in X_i$ such that if $x_i^j = [\rho(a_i), \rho(b_i)]x_i^{j-1}$, then the order of the points $x_i^j$ on $S^1$ is cyclically lexicographic of the form $$x_1^1 < x_2^1 < \ldots  < x_g^1 < x_1^2 < x_2^2 < \ldots < x_1^k <\ldots < x_g^k$$
\end{enumerate}
\end{proposition}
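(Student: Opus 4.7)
The plan is to reduce everything to the $\PSL(2,\R)$ case (Proposition \ref{maximal reps prop}) by composing $\rho$ with the $k$-fold covering projection $\pi \colon \PSLk \to \PSL(2,\R)$ and then analyzing how rotation numbers transform under $\pi$. Realizing both $\PSLk$ and $\PSL(2,\R)$ as acting on copies of $S^1 = \R/\Z$ related by the $k$-fold cover $x \mapsto kx$, one checks that for any $h \in \PSLk$ with lift $\tilde h \in \Homeo_\Z(\R)$, the formula $\widetilde{\pi h}(x) := k\tilde h(x/k)$ defines a lift of $\pi(h)$ in $\Homeo_\Z(\R)$ satisfying $\rott(\widetilde{\pi h}) = k\rott(\tilde h)$. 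Because lifted commutators are canonical and the product of lifted commutators is an integer translation, this scaling survives multiplication and yields $\euler(\pi \rho) = k \cdot \euler(\rho) = 2g - 2$. Thus Proposition \ref{maximal reps prop} applies to $\pi\rho$, and each of (i)--(iii) for $\rho$ will be read off from the corresponding statement for $\pi\rho$.

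For (i), Proposition \ref{maximal reps prop}(i) gives $\rot(\pi\rho(\gamma)) = 0$ for every $\gamma$; reducing the scaling relation mod $\Z$ yields $\rot(\pi h) \equiv k\rot(h) \pmod{\Z}$, forcing $\rot(\rho(\gamma)) \in \{0, 1/k, \ldots, (k-1)/k\}$ and producing the integer $m_\gamma$. For (ii), Proposition \ref{maximal reps prop}(ii) gives $\rott[\pi\rho(a_i),\pi\rho(b_i)] = 1$, and since the scaling relation is exact (not merely mod $\Z$) on the canonical lifted commutators, we get $\rott[\rho(a_i),\rho(b_i)] = 1/k$ at once, and hence $\rot = 1/k$ too.

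For (iii), given any periodic orbit $X_i$ of $[\rho(a_i), \rho(b_i)]$, the fact that $\rot = 1/k$ in lowest terms forces $|X_i| = k$, and $X_i$ projects onto a single point $y_i$ downstairs that is fixed by $[\pi\rho(a_i),\pi\rho(b_i)]$; choosing the representative $y_i \in [0,1)$, one has $X_i = \{(y_i + j)/k : 0 \le j \le k-1\}$. Since $[\rho(a_i),\rho(b_i)]$ is orientation-preserving with rotation number $1/k$, it must send each point of $X_i$ to its successor in the cyclic order on $S^1$, i.e.\ $(y_i + j)/k \mapsto (y_i + (j+1 \bmod k))/k$. By Proposition \ref{maximal reps prop}(iii) the $y_i$ are cyclically lex-ordered $y_1 < \ldots < y_g$ in $[0,1)$, so the points of $\bigcup_i X_i$ occur on $S^1$ in the cyclic order $y_1/k < \ldots < y_g/k < (y_1+1)/k < \ldots < (y_g+k-1)/k$. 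Setting $x_i^0 := (y_i + k-1)/k$ then yields $x_i^j = (y_i + j-1)/k$, which is precisely the required order.

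The only real obstacle is pinning down the scaling relation for lifted rotation numbers under $\pi$ and its compatibility with canonical lifted commutators; once this bookkeeping is in place, all three claims fall out essentially mechanically from Proposition \ref{maximal reps prop}.
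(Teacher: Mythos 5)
Your proof is correct and follows the same strategy as the paper's: compose with the covering projection $\pi$ to reduce to the $\PSL(2,\R)$ case (Proposition \ref{maximal reps prop}). Your explicit scaling lemma $\rott(\widetilde{\pi h}) = k\,\rott(\tilde h)$ (which holds because conjugation by $x \mapsto kx$ is a homomorphism on the subgroup of $\Homeo_\Z(\R)$ commuting with translation by $1/k$, and hence respects lifted commutators) cleanly packages the bookkeeping that the paper leaves implicit, and your treatment of part (iii) -- identifying $X_i$ with the full $\proj_k$-fibre over a single fixed point $y_i$ of $[\nu(a_i),\nu(b_i)]$ and reading off the cyclic order from Proposition \ref{maximal reps prop}(iii) -- spells out details the paper dismisses as an ``immediate consequence.''
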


\begin{proof} 
Recall that $\PSLk$ is defined via the central extension $$0 \to \Z/k\Z \to \PSLk \overset{\pi}\to \PSL(2, \R) \to 1$$
and the action of $\phi \in \PSLk$ on $S^1$ is specified by
\begin{equation} \label{eq 1}
\pi \circ \phi(x) = \phi( \proj_k(x))
\end{equation}
 where $\proj_k: S^1 \to S^1$ is the $k$-fold cyclic covering map.   
In particular, for each $\gamma \in \Gamma_g$, its image $\rho(\gamma) \in \PSLk$  commutes with the order $k$ rotation of $S^1$.  

Given a representation $\rho: \Gamma_g \to \PSLk$ with $\euler(\rho) = \frac{2g-2}{k}$, consider the representation $\nu: \Gamma_g \to \PSL(2,\R)$ defined by $\nu(\gamma) = \pi \circ \rho(\gamma)$.  In other words, for each $\gamma \in \Gamma_g$, the homeomorphism $\rho(\gamma)$ is obtained from $\nu(\gamma)$ by choosing a particular lift of the action of $\nu(\gamma)$ on $S^1$ to an action on the $k$-fold cyclic cover.   By Proposition \ref{maximal reps prop},  $\nu(\gamma)$ has a fixed point, and so $\rho(\gamma)$ has a point whose orbit is contained in the orbit of an order $k$ rotation (the covering transformation of the $k$-fold cyclic cover).  It follows that $\rot(\rho(\gamma)) = m_\gamma/k$ for some integer $m_\gamma$.  

Now consider a pair of generators $(a_i, b_i)$.  We know from Proposition \ref{maximal reps prop} that if $\tilde{\nu}(a_i)$ and $\tilde{\nu}(b_i)$ are lifts of $\nu(a_i)$ and $\nu(b_i)$ to homeomorphisms of the infinite cyclic cover $\R$ of $S^1$, then there is some $x$ such that $[\tilde{\nu}(a_i), \tilde{\nu}(b_i)](x) = x+1$, i.e. $[\tilde{\nu}(a_i), \tilde{\nu}(b_i)]$ acts on $x$ by the covering transformation of $\R \to S^1$.  It follows that if we instead take the lifts $\rho(a)$ and $\rho(b)$ to the $k$-fold cyclic cover, there will be a point $x'$ in the $k$-fold cyclic cover of $S^1$ such that $[\rho(a), \rho(b)](x')$ agrees with the action of the covering transformation on $x'$, i.e. acts on $x'$ (and its orbit) by an order $k$ rotation.   

It follows that $\rot[\rho(a), \rho(b)] = 1/k$, $\rott[\rho(a), \rho(b)] = 1/k$ and that $[\tilde{\rho}(a_i), \tilde{\rho}(b_i)]$ translates any lift of $x'$ by $1/k$.   

Finally, property iii) is an immediate consequence of Property iii) of Proposition \ref{maximal reps prop} applied to $\nu$ and the fact that $\rho$ is the lift of $\nu$ to the $k$-fold cyclic cover.  

\end{proof}

As in Proposition \ref{maximal reps prop}, an analogous statement holds for $k<0$.  

\boldhead{Maximal $\PSLk$ representations}
Since representations to $\PSLk$ arise from cyclic covers, they satisfy a stronger Milnor-Wood type inequality.  Namely, if $\rho$ is any representation $\Gamma_g \to \PSLk$, then $\pi \circ \rho: \Gamma_g \to \PSL(2,\R)$ has Euler number $k\euler(\rho)$.  By the Milnor-Wood inequality, we know that $|\euler(\pi \circ \rho)| \leq 2g-2$, so $|\euler(\rho)| \leq \frac{2g-2}{k}$.  

We will refer to a representation $\rho: \Gamma_g \to \PSLk$ with $|\euler(\rho)| = \frac{2g-2}{k}$ as a \emph{maximal $\PSLk$ representation}.

\begin{remark}[\textbf{New standing assumption}] \label{positive remark}
From now on, it will be convenient to only work with representations such that $\euler(\rho) \geq 0$.  We claim we lose no generality in doing so.  Indeed, if $\{a_1, b_1, ... a_g, b_g\}$ is a standard generating set for $\Gamma_g$, and $\rho: \Gamma_g \to \Homeo_+(S^1)$ a representation such that 
$$0 > \euler(\rho) :=  \rott \left( [\rho(a_1), \rho(b_1)][\rho(a_2), \rho(b_2)]...[\rho(a_g), \rho(b_g)] \right)$$
we note that 
\begin{align*}
 0 < -\euler(\rho) & = \rott  \left( ( [\rho(a_1), \rho(b_1)][\rho(a_2), \rho(b_2)]...[\rho(a_g), \rho(b_g)])^{-1} \right)\\
& = \rott \left( ( [\rho(b_g), \rho(a_g)][\rho(b_{g-1}), \rho(a_{g-1})]...[\rho(b_1), \rho(a_1)]) \right)
\end{align*} 
In other words, if $\sigma \in \Aut(\Gamma_g)$ is defined by permuting the generators
$$\sigma(a_i) = b_{g+1-i},  \,\, \sigma(b_i) = a_{g+1-i},$$
then  $\rho \circ \sigma: \Gamma_g \to \Homeo_+(S^1)$  is a representation with positive Euler number.  Moreover, the induced map $\rho \mapsto \rho \circ \sigma$ on $\Hom(\Gamma, \Homeo_+(S^1))$ is clearly a homeomorphism, so permutes connected components.  

Thus, for the remainder of this work it will be a standing assumption that $\euler(\rho) \geq 0$ for all representations $\rho: \Gamma_g \to \Homeo_+(S^1)$, and \emph{maximal $\PSLk$ representations} are representations to $\PSLk$ with $\euler(\rho) = \frac{2g-2}{k}$.  
\end{remark}

\section{Proof strategy for Theorem \ref{rot rig thm}} \label{proof strategy sec}

\subsection{A toy case}  \label{easy case subsec}

In the special case of a maximal $\PSL^{(2)}$ representation, and with an additional assumption on the rotation numbers of some generators, we can already prove a version of \emph{rotation number rigidity}.    We do this special case now to illustrate the proof strategy of Theorem \ref{rot rig thm} and to motivate the technical work in Sections \ref{CW sec} and \ref{technical subsec}.  

\begin{proposition}[\textbf{Rotation number rigidity, toy case}]  \label{SL2 prop}
Let $\rho_0: \Gamma_g \to \SL(2, \R) = \PSL^{(2)}$ be a maximal $\PSL^{(2)}$ representation and let $S = \{a_1, b_1, ... a_g, b_g\}$ be a standard generating set for $\Gamma_g$.  Assume additionally that that $\rot(\rho_0(a_i)) = 1/2$ for all $i$.   Then $\rot(\rho(a_i))$ is constant on the component containing $\rho_0$.  
\end{proposition}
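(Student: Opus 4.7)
The plan is to show that the subset
\[
Y := \{\rho \in X : \rot(\rho(a_i)) = 1/2 \text{ for all } i\}
\]
of the connected component $X \subset \Hom(\Gamma_g, \Homeo_+(S^1))$ containing $\rho_0$ is both closed and open in $X$, hence equal to $X$ by connectedness. Closedness is automatic from continuity of the functions $\rot_\gamma$, and $\rho_0 \in Y$ by hypothesis; throughout, $\euler \equiv g - 1$ on $X$. The work is in proving openness.

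The heart of the proof is the following claim $(\ast)$: \emph{for every $\rho \in Y$, each lifted commutator $[\tilde\rho(a_i), \tilde\rho(b_i)]$ has lifted rotation number exactly $1/2$}. Once $(\ast)$ is known, openness of $Y$ at $\rho$ follows from a continuity-rigidity sandwich. Pick a neighborhood $V$ of $\rho$ in $X$ on which $\rott[\rho'(a_i), \rho'(b_i)] > 1/3$ and $|\rot(\rho'(a_i)) - 1/2| < 1/4$ for every $\rho' \in V$ and every $i$; Lemma~\ref{comm lemma}(ii) forces the denominator $q$ of $\rot(\rho'(a_i))$ in lowest terms to satisfy $1/q > 1/3$, so $q \leq 2$. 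The case $q = 1$ is excluded by the proximity of $\rot(\rho'(a_i))$ to $1/2$, and part (i) of the lemma rules out irrational rotations, leaving only $\rot(\rho'(a_i)) = 1/2$, i.e.\ $\rho' \in Y$.

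The main substantive step is the proof of $(\ast)$. Set $c_i := [\tilde\rho(a_i), \tilde\rho(b_i)] \in \Homeo_\Z(\R)$. Since $\rot(\rho(a_i)) = 1/2$ has denominator $2$, Lemma~\ref{comm lemma}(ii) gives $|\rott(c_i)| \leq 1/2$. Because $\euler(\rho) = g - 1$ and $\prod_i [\rho(a_i), \rho(b_i)]$ is the identity on $S^1$, the product $c_1 c_2 \cdots c_g$ is a lift of the identity with lifted rotation $g - 1$, so $c_1 c_2 \cdots c_g = T^{g-1}$. For each $i$, conjugating by $c_1 \cdots c_{i-1}$ and using that $T^{g-1}$ is central in $\Homeo_\Z(\R)$ yields
\[
c_i \cdot (c_{i+1} c_{i+2} \cdots c_g c_1 \cdots c_{i-1}) = T^{g-1}.
\]
Lemma~\ref{additivity lemma} then gives $\rott(c_i) + \rott(c_{i+1} \cdots c_{i-1}) = g - 1$. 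On the other hand, a routine induction on $n$ using Theorem~\ref{CW thm} shows that the lifted rotation of a product of $n$ commutators each having $\rott \leq 1/2$ is at most $(2n-1)/2$; with $n = g - 1$ this bounds $\rott(c_{i+1} \cdots c_{i-1}) \leq g - 3/2$. Substituting, $\rott(c_i) \geq 1/2$, and with the Lemma~\ref{comm lemma} upper bound we conclude $\rott(c_i) = 1/2$, as claimed.

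The main obstacle to extending this toy argument to Theorem~\ref{rot rig thm} (and what motivates the technical sections) is the hypothesis $\rot(\rho_0(a_i)) = 1/2$. It is precisely this hypothesis that gives the sharp bound $|\rott(c_i)| \leq 1/2$ used above; when some $\rot(\rho_0(a_i))$ equals $0$ or a more general value $m/k$, Lemma~\ref{comm lemma} gives only $|\rott(c_i)| \leq 1$ or a weaker denominator bound, and the cyclic-permutation identity from Lemma~\ref{additivity lemma} no longer saturates the Calegari--Walker bound. Handling this requires the finer product-rotation estimates of Section~\ref{CW sec} and the ``Euclidean algorithm'' for commutator rotation numbers developed in the technical subsections.
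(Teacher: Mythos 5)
Your proof is correct, and it uses the same toolkit as the paper---Lemma~\ref{comm lemma} to bound $\rott$ of a commutator via the denominator of the generator's rotation number, a $\frac{2n-1}{2}$ bound for a product of $n$ commutators obtained by induction from Theorem~\ref{CW thm} (this is exactly Lemma~\ref{SL2 lemma}), and Lemma~\ref{additivity lemma} with $\euler(\rho) = g-1$ to convert the upper bound on a product of $g-1$ lifted commutators into a lower bound on the remaining one. The structural packaging differs in two useful ways, and I think you should be aware of where you have actually tightened the paper's presentation. First, the paper argues by contradiction: it asserts that ``by continuity there exists a representation $\rho$ in this component'' with all $\rot(\rho(a_i)) \neq 0$ and some $\rot(\rho(a_j))$ irrational or of denominator $> 2$, fixes that single index $j$ (relabeled to $g$), and derives a contradiction. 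Your version replaces this with an explicit open-closed argument on $Y = \{\rho : \rot(\rho(a_i)) = 1/2 \text{ for all } i\}$, with openness flowing from your claim $(\ast)$ plus the continuity of $\rott$ on commutators; this avoids having to justify that a deforming representation can be chosen to satisfy the ``all $\neq 0$'' and ``some irrational'' conditions simultaneously, which the paper leaves to the reader. Second, rather than singling out $i=g$, you conjugate the relation $c_1\cdots c_g = T^{g-1}$ cyclically, which lets you prove $\rott(c_i) = 1/2$ \emph{for every} $i$ uniformly. One caveat you share with the paper: the invocation of Theorem~\ref{CW thm} to prove the $\frac{2n-1}{2}$ bound tacitly treats the case where some $\rott(c_j)$ might be negative (Lemma~\ref{comm lemma} gives only $|\rott(c_j)| \leq 1/2$), whereas the theorem as stated assumes nonnegative inputs; this is resolved by the standard observation that the Calegari--Walker bound is monotone in its inputs, but neither your proof nor the paper's spells this out.
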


\begin{proof} 
Let $\rho_0$ be a maximal $\PSL^{(2)}$ representation as in the statement of the Proposition.  By Proposition \ref{maximal k reps prop}, $\rott[\rho_0(a_i), \rho_0(b_i)] = 1/2$ for all $i \in \{1, 2, ... ,g\}$.  
Suppose for contradiction that for some $j$, $\rot(\rho(a_j))$ is not constant on the component of $\Hom(\Gamma_g, \Homeo_+(S^1))$ containing $\rho_0$.  
By continuity of $\rot$, there exists a representation $\rho$ in this component such that 
\begin{enumerate}[i)]
\item for all $i \in \{1, 2, ... ,g\}$, we have $\rot(\rho(a_i)) \neq 0$, and
\item there exists $j$ such that $\rot(\rho(a_j))$ is either irrational or of the form $p/q$ with $q>2$.  
\end{enumerate}

It follows by Lemma \ref{comm lemma} that $\rot[\rho(a_i), \rho(b_i)] \leq 1/2$ for all $i$, and $\rot[\rho(a_j), \rho(b_j)] < 1/2$.  For simplicity, assume that $j = g$.  (This is really no loss of generality as performing a cyclic permutation of the generators will not affect our proof.)  

We estimate the Euler number of $\rho$ using the following lemma.  

\begin{lemma} \label{SL2 lemma}
Suppose that $\rho: \Gamma_g \to \Homeo_+(S^1)$ satisfies $\rott[\rho(a_i), \rho(b_i)] \leq 1/2$ for each \\ $i \in \{1, 2, \ldots ,g-1\}$.  Then 
\begin{equation} \label{eq lemma}
\rott \left( [\rho(a_1), \rho(b_1)]...[\rho(a_{g-1}), \rho(b_{g-1})]\right) \leq \frac{2g - 3}{2}.
\end{equation}
Moreover, if equality holds in \eqref{eq lemma}, then $\rott[\rho(a_i), \rho(b_i)] = 1/2$ for all $i$.  
\end{lemma}

\begin{proof}[Proof of Lemma \ref{SL2 lemma}]
Let $\rho$ satisfy $\rott[\rho(a_i), \rho(b_i)] \leq 1/2$ for all $i \in \{ 1, 2, \ldots, g-1\}$.   Inductively, assume that $\rott([a_1, b_1]\ldots [a_n, b_n] )\leq \frac{2n - 1}{2}$ with equality implying that $\rott[\rho(a_i), \rho(b_i)] = 1/2$ for all $i \in \{1, 2, ... n\}$.  The base case $n=1$ is covered by Lemma \ref{comm lemma}.  Define 
\begin{align*}
& \tilde{f} := [\tilde{a}_1, \tilde{b}_1]\ldots [\tilde{a}_n, \tilde{b}_n] \,\, \text{ and } \\  
& \tilde{g} := [\tilde{a}_{n+1}, \tilde{b}_{n+1}]
\end{align*}
and apply Theorem \ref{CW thm} to $\tilde{f}\tilde{g}$.   The conclusion of the theorem states that 
$$\rott(\tilde{f}\tilde{g}) = \rott([a_1, b_1]\ldots [a_{n+1}, b_{n+1}]) \leq \frac{2n - 1 + 1 + 1}{2} = \frac{2n + 1}{2}$$ 
with equality only if $\rott(\tilde{f}) = \frac{2n - 1}{2}$ and $\rott(\tilde{g}) = \rott([a_{n+1}, b_{n+1}]) = 1/2$, completing the inductive step.   
\end{proof}

Returning to the proof of Proposition \ref{SL2 prop} now, by definition of Euler number we have
$$[\tilde{\rho}(a_1), \tilde{\rho}(b_1)]\ldots [\tilde{\rho}(a_g), \tilde{\rho}(b_g)] = T^{\euler(\rho)}.$$
Lemma \ref{additivity lemma} implies that 
$$\rott \left( [\tilde{\rho}(a_1), \tilde{\rho}(b_1)]\ldots [\tilde{\rho}(a_{g-1}), \tilde{\rho}(b_{g-1})] \right) + \rott[\tilde{\rho}(a_g), \tilde{\rho}(b_g)] = \euler(\rho) = \frac{2g-2}{2},$$
and by Lemma \ref{SL2 lemma} we have $\rott([\tilde{\rho}(a_1), \tilde{\rho}(b_1)]\ldots [\tilde{\rho}(a_{g-1}), \tilde{\rho}(b_{g-1})]) \leq \frac{2g-3}{2}$.  It follows that $\rott[\tilde{\rho}(a_g), \tilde{\rho}(b_g)] \geq 1/2$.  But by hypothesis, $\rott[\tilde{\rho}(a_g), \tilde{\rho}(b_g)]< 1/2$, giving the desired contradiction.  

\end{proof}


\subsection{Modifications for the general case} \label{mod subsec}
Our proof of Proposition \ref{SL2 prop} made essential use of two special assumptions.  The first was that we had a standard generating set $\{a_1, b_1, ... a_g, b_g\}$ such that $\rot(\rho_0(a_i)) = 1/2$ held for all $i$.   This allowed us to conclude that a representation $\rho$ nearby to $\rho_0$ satisfied $\rott[\tilde{\rho}(a_i), \tilde{\rho}(b_i)] \leq 1/2$ for all $i$, using Lemma \ref{comm lemma}.  Put otherwise, we needed the fact that $\rho_0$ was a \emph{local maximum} for the function 
$$
\begin{array}{rl} R_i:  \Hom(\Gamma_g, \Homeo_+(S^1)) \to & \R \\
 \rho  \,\,\, \mapsto & \rott[\tilde{\rho}(a_i), \tilde{\rho}(b_i)]
\end{array}
$$
for each of the pairs $a_i, b_i$ in our standard generating set.  
(Of course, we could have reached the same conclusion under the assumption that $\rot(\rho_0(b_i)) = 1/2$, but Lemma \ref{comm lemma} does not imply anything in the case where, say, $\rot(\rho_0(a_1)) = \rot(\rho_0(b_1)) = 0$).  The meat of the proof of Theorem \ref{rot rig thm} lies in treating the case where images of generators have rotation number zero.   
This is carried out in Section \ref{technical subsec}.
(Reduction to this case is via the \emph{Euclidean algorithm for commutators of diffeomorphisms}, carried out in Section \ref{Euc alg subsec}.) 

The second key assumption in Proposition \ref{SL2 prop} was that our target group was $\PSL^{(2)}$.  We used this for the estimate on lifted rotation numbers in Lemma \ref{SL2 lemma}.  
To modify Lemma \ref{SL2 lemma} for representations to $\PSLk$, we need to bound the lifted rotation number of the product of $g-1$ homeomorphisms -- the lifted commutators $[\tilde{\rho}(a_i), \tilde{\rho}(b_i)]$ -- each with lifted rotation number $1/k$.  Unfortunately, the na\"ive approach of just repeating the argument from Lemma \ref{SL2 lemma} for these lifted commutators gives the bound
$$\rott \left( [\rho(a_1), \rho(b_1)]...[\rho(a_{g-1}), \rho(b_{g-1})]\right) \leq g-2$$
 \emph{independent of $k$}.  This is not a strong enough for our purposes; we will need to take a fundamentally different approach.

The groundwork required to solve these problems is the content of the next section.  We will undertake a detailed study of the behavior of lifted rotation numbers of products in $\Homeo_\Z(\R)$, building on the work of Calegari and Walker in \cite{CW}, and placing special emphasis on examples that arise from maximal $\PSLk$ representations.  In Section \ref{rot pf sec} we will then use these examples to first prove that $R_i$ does indeed have a local maximum at $\rho_0$ (in fact we will prove something stronger, characterizing other local maxima), and then to find a suitable replacement for Lemma \ref{SL2 lemma}.

\section{Rotation numbers of products of homeomorphisms} \label{CW sec}

In Section 3.2 of \cite{CW}, Calegari and Walker give and algorithm for computing the maximal lifted rotation number of a product $\tilde{a} \tilde{b} \in \Homeo_\Z(\R)$ given the lifted rotation numbers of $\tilde{a}$ and $\tilde{b}$. (Their algorithm also works in the more general setting of arbitrary words in $\tilde{a}$ and $\tilde{b}$ -- but not words with $\tilde{a}^{-1}$ and $\tilde{b}^{-1}$ ).    Assuming that $\rott(\tilde{a})$ and $\rot(\tilde{b})$ are rational, the algorithm takes as input the combinatorial structure of periodic orbits for $a$ and $b$ on $S^1$ (where $a$ and $b$ are the images of $\tilde{a}$ and $\tilde{b}$ under the surjection $\Homeo_\Z(\R) \to \Homeo_+(S^1))$, and as output gives the maximum possible value of $\rott(\tilde{a}\tilde{b})$, given that combinatorial structure.  

Calegari and Walker's algorithm readily generalizes to words in a larger alphabet, and we will use this generalization to prove Theorem \ref{rot rig thm}.  We find it convenient to describe the algorithm using slightly different language than that in \cite{CW}; as this will also let us treat the case (not examined in \cite{CW}) where periodic orbits of two homeomorphisms intersect nontrivially.  

\subsection{The Calegari--Walker algorithm} \label{CW subsec}

Let $c_1, c_2, \ldots, c_n$ be elements of $\Homeo_+(S^1)$ and let $\tilde{c}_1, ... \tilde{c}_n$ be lifts to $\Homeo_\Z(\R)$.  Assume that $\rott(c_i)  =  p_i/q_i$ for some integers $p_i \geq 0$ and $q_i > 0$.  

Let $X_i \subset S^1$ be a periodic orbit for $c_i$, and let $\tilde{X}_i \subset \R$ be the pre-image of $X_i$ under the projection $\R \to \R/\Z = S^1$.  If $\rot(c_i) = 0$, then one may take $X_i$ to be any finite subset of the fixed set $\fix(c_i)$.  Choose a point $x_i^0 \in \tilde{X}_i$, and enumerate the points of $\tilde{X_i}$ by labeling them $x_i^j$ in increasing order, i.e. with $x_i^j < x_i^{j+1}$.  Let $\mathbf{X}:= \{x_i^j : 1 \leq i \leq n, \, j \in \Z\} \subset \R$
We will define a dynamical system with orbit space $\mathbf{X}$ that encodes the ``maximum distance $\tilde{c_i}$ can translate points to the right."   

This system is generated by the $c_i$ acting on $\mathbf{X}$ as follows.  There is a natural left-to-right order on the points $x_i^j \in \mathbf{X}$ induced by their order as a subset of $\R$.  
Each $c_i$ acts by moving to the right, skipping over $p_i$ points of $\tilde{X}_i$ (counting the one we start on, if we start on a point of $\tilde{X}_i$) and landing on the $(p_i +1)$th.   See Example \ref{first orbit example} for an example.   We will use $c_i \cdot x_k^j$ to denote the action of $c_i$ on a point $x_k^j \in \mathbf{X}$.  Note that the action of $c_i$ is \emph{not} in any sense an action by a homeomorphism of $\mathbf{X}$.  When we want to consider $\tilde{c}_i$ as a homeomorphism, we will use the notation $\tilde{c}_i(x)$.  

Say that an orbit of this dynamical system is \emph{$\frac{l}{m}$-periodic} for a word $w$ in the alphabet $\{c_1, c_2, \ldots , c_n\}$ if there exists a point $x_i^j \in \mathbf{X}$ such that 
$$w^m \cdot x_i^j = x_i^j + l,  \, \text{ for some } l \in \Z.$$  
We call the orbit of such an $x_i^j$ an \emph{$\frac{l}{m}$-periodic orbit}.  

We claim that periodic points compute the maximum possible rotation number of the homeomorphism $w(\tilde{c}_1\ldots \tilde{c}_n)$.  Precisely, we have the following.  

\begin{proposition} \label{algorithm prop}
Suppose that $w = w(c_1, c_2, ... c_n)$ is $\frac{l}{m}$-periodic.  The following hold. 
\begin{enumerate} 
\item  $\rott(w(\tilde{c}_1,...\tilde{c}_n)) \leq \frac{l}{m}$.  
\item 

Each homeomorphism $\tilde{c}_i$ can be deformed to a homeomorphism $\tilde{d}_i \in \Homeo_\Z(\R)$ such that $\rott(w(\tilde{d}_1,...\tilde{d}_n)) = \frac{l}{m}$.  Moreover, the deformations can be carried out along a path of homeomorphisms preserving the lifted periodic orbits $\tilde{X}_i$ pointwise.   
\end{enumerate}
\end{proposition}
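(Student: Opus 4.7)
The plan for part (1) is to show that the combinatorial action $c_i \cdot (\cdot)$ on $\mathbf{X}$ is a pointwise upper bound for the actual homeomorphism, and to propagate this bound through words by order-preservation. I would first prove the key inequality $\tilde c_i(y) \leq c_i \cdot y$ for every $y \in \mathbf{X}$. If $y = x_i^s \in \tilde X_i$, then the $\Z$-equivariance of $\tilde c_i$ together with $\rott(\tilde c_i) = p_i/q_i$ forces $\tilde c_i(x_i^s) = x_i^{s+p_i}$, which equals $c_i \cdot x_i^s$ by the skipping rule. Otherwise $y$ lies in some open interval $(x_i^s, x_i^{s+1})$ disjoint from $\tilde X_i$, and order-preservation gives $\tilde c_i(y) \in (x_i^{s+p_i}, x_i^{s+p_i+1})$, strictly less than $x_i^{s+p_i+1} = c_i \cdot y$. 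Because $c_i \cdot y$ is itself in $\mathbf{X}$ and each $\tilde c_i$ is monotone, a straightforward induction on word length upgrades this to $w(\tilde c_1, \ldots, \tilde c_n)(y) \leq w \cdot y$ for all $y \in \mathbf{X}$. Applied to the $\tfrac{l}{m}$-periodic starting point $y_0 = x_i^j$ and iterated, it yields $w(\tilde c_1, \ldots, \tilde c_n)^{km}(y_0) \leq y_0 + kl$ for every $k \geq 1$, and dividing by $km$ gives the desired bound $\rott(w(\tilde c_1, \ldots, \tilde c_n)) \leq l/m$.

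For part (2), the plan is to manufacture a new genuine periodic orbit for $w(\tilde d_1, \ldots, \tilde d_n)^m$ sitting just inside the combinatorial orbit. The key observation is that although $\tilde c_i(y) < c_i \cdot y$ is strict whenever $y \notin \tilde X_i$, the shortfall occurs inside open intervals of the complement of $\tilde X_i$, where we retain complete freedom to redefine the homeomorphism while keeping $\tilde X_i$ pointwise fixed. Let $y_0, y_1, \ldots, y_{|w|m} = y_0 + l$ denote the combinatorial orbit of $y_0 = x_i^j$ under the word $w^m = a_1 a_2 \cdots a_{|w|m}$. For a small parameter $\epsilon > 0$ I would choose perturbations $\delta_t \in (0, \epsilon)$ with $\delta_0 = \delta_{|w|m}$, set $y_t^* := y_t - \delta_t$, and arrange $\tilde d_{a_t}(y_{t-1}^*) = y_t^*$ at each step. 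When $y_{t-1}^* \in \tilde X_{a_t}$ this is already enforced by the preservation condition; otherwise $y_{t-1}^*$ lies in some open interval $(x_{a_t}^s, x_{a_t}^{s+1})$, and for sufficiently small $\epsilon$ the target $y_t^*$ lies in the matching image interval $(x_{a_t}^{s+p_{a_t}}, x_{a_t}^{s+p_{a_t}+1})$, where we may prescribe the value of $\tilde d_{a_t}$ freely. The required conditions are finite modulo $\Z$, and can be realized by a $\Z$-equivariant modification of $\tilde c_{a_t}$ supported on fundamental-domain intervals of $\R \setminus \tilde X_{a_t}$; since the space of monotone homeomorphisms of such an interval with fixed endpoints and finitely many prescribed interior values is convex, a path from $\tilde c_{a_t}$ to $\tilde d_{a_t}$ exists inside $\Homeo_\Z(\R)$, preserving $\tilde X_{a_t}$ pointwise throughout.

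The resulting point $y_0^*$ then satisfies $w(\tilde d_1, \ldots, \tilde d_n)^m(y_0^*) = y_0^* + l$, giving a genuine periodic orbit of rotation $l/m$ and hence $\rott(w(\tilde d_1, \ldots, \tilde d_n)) = l/m$. The main obstacle will be the combinatorial bookkeeping needed to choose the shifts $\delta_t$ consistently: whenever several points $y_{s_1} < y_{s_2} < \cdots$ of the combinatorial orbit coincide with the same image $x_{a_t}^{s+p_{a_t}+1}$ (because the skipping rule collapses an entire open interval to a single endpoint), the corresponding target shifts must be chosen to preserve the order inherited from the sources. This forces a delicate choice compatible with the closing-up condition $\delta_{|w|m} = \delta_0$, and verifying that a valid choice exists for all sufficiently small $\epsilon$ is the technical heart of the argument.
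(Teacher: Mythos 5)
Your argument for part (1) is correct and follows the same reasoning as the paper: the pointwise inequality $\tilde{c}_i(y) \leq c_i\cdot y$ for $y\in\mathbf{X}$, propagated through the word by monotonicity, is exactly how the paper reads off the bound $\rott(w(\tilde{c}_1,\ldots,\tilde{c}_n)) \leq l/m$.

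For part (2), however, you take a genuinely different route, and the route you chose has a real gap. The paper does not try to manufacture an \emph{exact} periodic orbit for $w(\tilde{d}_1,\ldots,\tilde{d}_n)^m$. Instead it deforms each $\tilde{c}_i$ to a map $\tilde{d}_i$ that \emph{contracts} each interval $(x_i^{j-1}+\epsilon,\,x_i^j]$ into $(x_i^{j+p_i}-\epsilon,\,x_i^{j+p_i}]$ while fixing $\tilde{X}_i$. This choice is ``universal'' in the sense that it does not depend on the particular word $w$, the exponent $m$, or the combinatorial orbit being followed. The contraction forces $w(\tilde{d}_1,\ldots,\tilde{d}_n)^{km}(x_i^j)$ to stay within $\epsilon$ of $x_i^j+kl$ for \emph{all} $k\geq 1$, not just $k=1$; dividing by $km$ and combining with the upper bound from part (1) pins down $\rott(w(\tilde{d}_1,\ldots,\tilde{d}_n))=l/m$ without any closing-up condition at all. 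Your approach, by contrast, proposes to pick shifts $\delta_t\in(0,\epsilon)$ with $\delta_{|w|m}=\delta_0$ and realize $\tilde{d}_{a_t}(y_{t-1}-\delta_{t-1})=y_t-\delta_t$ at every step. This runs into exactly the obstruction you flag at the end: whenever the combinatorial action sends distinct orbit points in the same fundamental-domain interval of $\tilde{X}_{a_t}$ to the \emph{same} endpoint $x_{a_t}^{s+p_{a_t}+1}$ (the skipping rule collapses the whole open interval), the shifts at the target must be ordered compatibly with the sources, and these local order constraints propagate around the closed cycle of length $|w|m$. You do not verify that a globally consistent assignment exists, and this is not an incidental detail --- it is precisely the difficulty the contraction construction is designed to avoid. (There is also a small slip: the clause ``when $y_{t-1}^*\in\tilde{X}_{a_t}$ this is already enforced by the preservation condition'' conflates $y_{t-1}^*$ with $y_{t-1}$; since $\delta_{t-1}>0$, the shifted point is generically \emph{not} in $\tilde{X}_{a_t}$, so the preservation of $\tilde{X}_{a_t}$ enforces nothing about it.) If you want to salvage your strategy, note that you do not actually need $\delta_{|w|m}=\delta_0$: by part (1) one always has $\delta_{|w|m}\geq\delta_0$, and what one really needs is that the $\delta$'s remain \emph{bounded} under iteration. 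Forcing boundedness is exactly what the paper's contraction does, so the cleanest fix is to adopt that construction.
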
 

\begin{proof} This follows from the work in Section 3.2 of \cite{CW}; we sketch the proof here for the convenience of the reader and to shed light on the meaning of the dynamical system in the algorithm.  
To begin, we elaborate on how the dynamical system ``computes the maximum possible rotation number".  In fact, what the action of $\tilde{c}_i$ on $\mathbf{X}$ captures is the supremal distance the \emph{homeomoprhism} $\tilde{c}_i$ (acting on $\R$) can translate a point to the right -- that $\tilde{X}_i$ is a lift of a periodic orbit implies that for any $y < x_i^j$ we have $\tilde{c}_i(y) < x_i^{j+p_i}$.  This is encoded in the dynamical system as $c_i$ ``skipping over" $p_i$ points of $\tilde{X}_i$ and landing on the $(p_i + 1)$th.    

Thus, that $w(c_1, c_2, ... c_n)^m \cdot x_i^j = x_i^j + l \in \mathbf{X}$ implies that $w(\tilde{c}_1, \tilde{c}_2, ... \tilde{c}_n) \in \Homeo_\Z(\R)$ satisfies $w(\tilde{c}_1, \tilde{c}_2, ... \tilde{c}_n)^m(x_i^j) \leq x_i^j + l$.  
If $x_i^j$ is an $\frac{l}{m}$-periodic point for $w$, we have $$w(\tilde{c}_1, \tilde{c}_2, ... \tilde{c}_n)^m(x_i^j) \leq x_i^j + l$$ and using the fact that $w(\tilde{c}_1, \tilde{c}_2, ... \tilde{c}_n)$ commutes with integer translations, this implies that 
$$w(\tilde{c}_1, \tilde{c}_2, ... \tilde{c}_n)^{km}(x_i^j) \leq x_i^j + kl$$ for all $k$.  Hence, $\rott(w(\tilde{c}_1, \tilde{c}_2, ... \tilde{c}_n)) \leq l/m$.

We will not use the second part of Proposition \ref{algorithm prop} in the sequel, so our proof sketch will be brief.  Choose a small $\epsilon > 0$ and continuously deform the action of $\tilde{c}_i$, preserving the action on $\tilde{X}_i$ to a homeomorphism $\tilde{d}_i$ that contracts each interval $(x_i^{j-1} + \epsilon, x_i^{j}]$  into $(x_i^{j+p_i} \hspace{-3pt}- \epsilon, \, x_i^{j+p_i}]$, and maps $x_i^j$ to $x_i^{j+p_i}$.  It is clear that this can be done equivariantly with respect to integer translations (i.e. through a path in $\Homeo_\Z(\R))$, and that the rotation number, which can be read off of the action on $\tilde{X}_i$, remains constant.   If $\epsilon$ is chosen close to zero, the action of $\tilde{d_i}$ on $\R$ will approximate the action of $d_i$ (and hence of $c_i$) on $\mathbf{X}$ given by the dynamical system described in our algorithm.  
 
In particular, $w(\tilde{d}_1,...\tilde{d}_n)^m(x_i^j)$ will be close to $x_i^j + l$.  The fact that $\tilde{d}_i$ contracts the interval $(x_i^j + \epsilon, x_i^{j+1}]$  into $(x_i^{j+1} \hspace{-3pt}- \epsilon, x_i^{j+1}]$ further implies that $(\tilde{d}_1...\tilde{d}_n)^{km}(x_i^j)$ approaches $x_i^j + kl$ as $k \to \infty$.  It follows from the definition of lifted rotation number that $\rott(w(\tilde{d}_1,...\tilde{d}_n)) = l/m$. 
 
\end{proof}

Let us illustrate the algorithm with an instructive example.  

\begin{example}  \label{first orbit example}
Let $c_1, c_2, c_3 \in \Homeo_+(S^1)$ satisfy $\rot(c_i)= 1/2$ for $i = 1,2,3$, and let $\tilde{c}_i$ be a lift with $\rott(\tilde{c}_i) = 1/2$.  Suppose the points $x_i^j$ have the following (lexicographic) order 
$$\ldots \, x_1^j < x_2^j < x_3^j < x_1^{j+1} < x_2^{j+1} \ldots$$
We use the algorithm to produce an upper bound for $\rott(\tilde{c}_1\tilde{c}_2\tilde{c}_3)$.  

Since $\rot(c_i) = 1/2$, we have $x_i^{j+2l} = x_i^j + l$ for all $i$ and $j$ and any $l \in \Z$.  The following diagram depicts a $5/2$-\emph{periodic orbit} of $w = c_1c_2c_3$ acting on $\mathbf{X}$.  

$$x_1^0 \overset{c_3} \longmapsto x_3^1 \overset{c_2} \longmapsto x_2^3 \overset{c_1} \longmapsto x_1^5 \overset{c_3} \longmapsto x_3^6 \overset{c_2} \longmapsto x_2^8 \overset{c_1} \longmapsto x_1^{10} = x_1^0+5$$
Hence, by Proposition \ref{algorithm prop}, $\rott(\tilde{c}_1\tilde{c}_2\tilde{c}_3) \leq 5/2$.
\end{example}

The following example is a mild generalization of Example \ref{first orbit example}. It will play a key role in Section \ref{main pf sec}.

\begin{example} \label{more general example}
Let $c_1, c_2, ... ,c_n \in \Homeo_+(S^1)$ satisfy $\rot(c_i) = 1/k$, and let $\tilde{c}_i$ be a lift of $c_i$ to $\Homeo_\Z(\R)$ satisfying $\rott(\tilde{c}_i) = 1/k$.  Suppose there are lifts $\tilde{X}_i = \{x_i^j\}$ of periodic orbits with (lexicographic) order
$$\ldots  \, x_1^j < x_2^j < \ldots  < x_n^j < x_1^{j+1}   \ldots $$  
We use the Calegari--Walker algorithm to show that $\rott(\tilde{c}_1 \tilde{c}_2 \ldots \tilde{c}_n) \leq \frac{2n-1}{k}$.

As in Example \ref{first orbit example}, since $\rot(c_i) = 1/k$, we have $x_i^{j+kl} = x_i^j + l$ for all $i$ and $j$ and any $l \in \Z$.   We compute an orbit for $c_1c_2 \ldots c_n$, starting with $x_1^0$.
\begin{equation} \label{long orbit equation}
x_1^0 \overset{c_n} \longmapsto x_n^1 \overset{c_{n-1}} \longmapsto x_{n-1}^3 \overset{c_{n-2}} \longmapsto x_{n-2}^5  \longmapsto \ldots 
 \overset{c_1} \longmapsto x_1^{2n-1} \overset{c_n} \longmapsto x_n^{2n} \overset{c_{n-1}} \longmapsto x_{n-1}^{2n+2}  \ldots 
 \end{equation}
In general, we have $c_1c_2 \ldots c_n \cdot x_1^j = x_1^{j + 2n-1}$.   After iterating this $k$ times, we have  
$$(c_1c_2 \ldots c_n)^k \cdot x_1^0 = x_1^{k(2n-1)} = x_1^0 + 2n-1$$
This gives a $\frac{2n-1}{k}$-periodic orbit, and we conclude that 
 $\rott(\tilde{c}_1 \tilde{c}_2 \ldots \tilde{c}_n) \leq \frac{2n-1}{k}$.  

\end{example}

Note that the set-up of Example \ref{more general example} mirrors the set-up of a maximal $\PSLk$ representation as described in Proposition \ref{maximal k reps prop}, with $c_i = [\rho(a_i), \rho(b_i)]$ and $\tilde{c}_i$ the lifted commutator $[\tilde{\rho}(a_i), \tilde{\rho}(b_i)]$.   

\subsection{Variations on Example \ref{more general example}} \label{variations subsec}
We now work through two variants of Example \ref{more general example} in which $\rott(\tilde{c}_1 \tilde{c}_2 \ldots \tilde{c}_n)$ fails to attain its maximal value of $\frac{2n-1}{k}$.  These will be used in the proof of Theorem \ref{rot rig thm}
when we consider \emph{deformations} of maximal $\PSLk$ representations in $\Hom(\Gamma_g, \Homeo_+(S^1))$.

\begin{example}[Deforming the combinatorial structure] \label{double points example}
Let $c_1, c_2, ... c_n \in \Homeo_+(S^1)$ satisfy $\rot(c_i) = 1/k$, and let $\tilde{c}_i$ be a lift of $c_i$ to $\Homeo_\Z(\R)$ satisfying $\rott(\tilde{c}_i) = 1/k$.  Suppose there is a lift $\tilde{X_i} = \{x_i^j\}$ of a periodic orbit for each $c_i$, ordered as
$$\ldots  \, x_1^j  \leq x_{2}^j  \leq \ldots  \leq x_n^j \leq x_1^{j+1}   \ldots$$  
with at least one instance of equality, say $x_1^0 = x_2^0$.  
We show that $\rott(\tilde{c}_1 \tilde{c}_2 \ldots \tilde{c}_n) < \frac{2n-1}{k}$ under the additional assumption that $2n-1$ and $k$ are relatively prime.  

First, note that $x_1^0 = x_2^0$ implies that $x_1^{dk} = x_2^{dk}$ for all $d \in \Z$.   
Now for \emph{any} instance where $x_i^j = x_{i+1}^j$ (call this a \emph{double point}) observe that the dynamical system exhibits the following behavior
$$ \ldots \overset{c_{i+1}} \longmapsto x_{i+1}^j = x_{i}^j \overset{c_{i}} \longmapsto x_{i}^{j+1} \overset{c_{i-1}} \longmapsto \ldots $$
whereas in the case where $x_i^j < x_{i+1}^j$ we have 
$$\ldots \overset{c_{i+1}} \longmapsto x_{i+1}^j  \overset{c_{i}} \longmapsto x_{i}^{j+2} \overset{c_{i-1}} \longmapsto \ldots $$
Informally speaking, double points ``decrease the distance that $c_i$ travels to the right."

Let $w = c_1c_2...c_n$ and consider an orbit of the action $w$ on $\mathbf{X}$.  We claim that for any initial point $x_1^j$, we have $w^k(x_1^j) \cdot = x_1^{j+s}$ for some $s < k(2n-1)$.  Indeed, the computation in Example \ref{more general example} together with the observation that ``double points decrease distance travelled to the right" shows that $s \leq k(2n-1)$, with equality only in the case where no double points are hit along the way.  In the case where no double points are hit, the orbit computation of Example \ref{more general example} holds, and we have 
$$x_1^j \overset{w} \longmapsto x_1^{j + 2n-1} \overset{w} \longmapsto x_1^{j + 2(2n-1)} \overset{w} \longmapsto ... \overset{w} \longmapsto x_i^{j+k(2n-1)}$$
However, that $2n-1$ and $k$ are relatively prime implies that $j + d(2n-1)$ is divisible by $k$ for some $d \in \{0, 1, ... k\}$, and by assumption this is a double point!  Thus, equality cannot hold in the inequality $s \leq k(2n-1)$.  

Choose $m$ such that $k$ divides $sm$.  Then $w^{km} \cdot (x_1^0) = x_1^{sm} = x_1^0 + sm/k$.  This gives 
a $\frac{sm/k}{km} = \frac{s/k}{k}$-periodic orbit for $w$, so 
$$\rott(w(\tilde{c}_1 \tilde{c}_2 \ldots \tilde{c}_n)) \leq  \frac{s/k}{k} < \frac{2n-1}{k}$$
\end{example}

\begin{remark}
The reader may find it instructive to apply the algorithm in a simple case to see what fails in example \ref{double points example} when $k$ and $n-1$ are \emph{not} relatively prime; we suggest $k = 3$, $n=4$ ,and $x_1^{3d} = x_2^{3d}$ the only instances of double points.  
\end{remark}

\boldhead{A condition for maximality} 
The Calegari--Walker algorithm produces an upper bound for the value of $\rott(w(\tilde{c}_1, ... \tilde{c}_n))$, given some data on the homeomorphisms $\tilde{c}_i$.  It is interesting to ask under what further conditions on $\tilde{c}_i$ this maximum is achieved.   As motivation, note that it is possible to have homeomorphisms $\tilde{c}_i$ satisfying the conditions in Example \ref{more general example}, but with $\rott(\tilde{c}_1 ... \tilde{c}_n) = \frac{n}{k}$ rather than the maximum value $\frac{2n-1}{k}$ given by the algorithm. Indeed, this will occur if we take each $\tilde{c}_i$ to be the translation $x \mapsto x+\frac{1}{k}$, a lift of an order $k$ rigid rotation of $S^1$.  Appropriate choice of lifted periodic orbits will satisfy the condition on the points $x_i^j$ described in the example.  

Our next proposition (\ref{max rot prop}) gives a condition for homeomorphisms $\tilde{c}_i$ as in Example \ref{more general example} to satisfy $\rott(\tilde{c}_1, ... \tilde{c}_n) = \frac{2n-1}{k}$, i.e. for the lifted rotation number to attain its maximal value.  This proposition and its corollaries will play an important role in the proof of Theorem \ref{rot rig thm}.

In what follows, when we say ``suppose $c_1, c_2, ... c_n \in \Homeo_+(S^1)$ are as in Example \ref{more general example}" we mean that $\rot(c_i) = 1/k$ for all $i$, that we have chosen lifts $\tilde{c}_i \in \Homeo_\Z(\R)$ such that $\rott(\tilde{c}_i) = 1/k$, and that we have lifts $\tilde{X_i} = \{x_i^j\}$ of periodic orbits for the $c_i$ with ordering 
$$\ldots  \, x_1^j < x_2^j < \ldots  < x_n^j < x_1^{j+1}   \ldots $$

\begin{proposition} \label{max rot prop}
Suppose $c_1, c_2, ... c_n \in \Homeo_+(S^1)$ are as in Example \ref{more general example}, and assume $k$ and $2n-1$ are relatively prime.   If $\rott(\tilde{c}_1\tilde{c}_2...\tilde{c}_n)$ attains its maximal value of $\frac{2n-1}{k}$, then for all $j \in \Z$ we have

\centering $\left\{  \begin{array}{ll}
\tilde{c}_1(x_2^j) > x_n^{j+1} \\
\tilde{c}_i(x_{i+1}^j) > x_{i-1}^{j+2} & \mbox{for } i \neq 1, n\\ 
\tilde{c}_n(x_1^j) > x_{n-1}^{j+1}
\end{array} \right.$
\end{proposition}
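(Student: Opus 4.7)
The plan is a proof by contradiction. Suppose $\rott(\tilde c_1\cdots\tilde c_n) = \tfrac{2n-1}{k}$ yet one of the stated inequalities fails; we treat the middle case in detail, so assume $\tilde c_{i_0}(x_{i_0+1}^{j_0}) \le x_{i_0-1}^{j_0+2}$ for some $i_0 \in \{2,\ldots,n-1\}$ and $j_0\in\Z$. Because each $\tilde c_i$ commutes with the integer translation $T$ and $x_i^{j+k}=x_i^j+1$, the failure persists on every superscript in the class $j_0 + k\Z$. Writing $\tilde f := \tilde c_1\tilde c_2\cdots\tilde c_n$, the aim is to bound the iterates $\tilde f^N(x_1^0)$ and deduce $\rott(\tilde f) < \tfrac{2n-1}{k}$, a contradiction.

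As computed in Example \ref{more general example}, an unobstructed iteration of $\tilde f$ starting at $x_1^J$ passes through $x_n^{J+1},x_{n-1}^{J+3},\ldots,x_{i_0+1}^{J+2(n-i_0)-1}$ before $\tilde c_{i_0}$ is applied, and ends at some point $\le x_1^{J+2n-1}$. The failure is triggered precisely when $J \equiv j_0-2(n-i_0)+1 \pmod k$; since $\gcd(2n-1,k)=1$, as $l$ ranges over $\{0,1,\ldots,k-1\}$ the values $l(2n-1)\bmod k$ cover every residue, so in any block of $k$ consecutive iterations of $\tilde f$ exactly one iteration triggers the failure. When it triggers, $\tilde c_{i_0}$ produces $\le x_{i_0-1}^{J+2(n-i_0)+1}$ rather than the algorithm's $x_{i_0}^{J+2(n-i_0)+1}$, and since each $\tilde c_i$ acts on $\tilde X_i$ exactly by $x_i^j\mapsto x_i^{j+1}$, monotonicity propagates this one-superscript deficit through every subsequent factor $\tilde c_{i_0-1},\ldots,\tilde c_1$, so the iteration ends at $\tilde f(x_1^J) \le x_1^{J+2n-2}$. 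The same coprimality argument applies within each successive block (even as the starting superscript shifts from losses in previous blocks), so an induction yields
\[
\tilde f^{mk}(x_1^0) \;\le\; x_1^{\,mk(2n-1)-m}\qquad\text{for every }m\ge 1.
\]

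Specializing to $m=k$ and using $x_1^{j+k}=x_1^j+1$ gives $\tilde f^{k^2}(x_1^0) \le x_1^0 + (k(2n-1)-1)$; iterating this bound produces $\rott(\tilde f) \le \tfrac{k(2n-1)-1}{k^2} < \tfrac{2n-1}{k}$, the desired contradiction. The two endpoint cases are handled by the same scheme: failure of $\tilde c_n(x_1^{j_0})> x_{n-1}^{j_0+1}$ triggers at the first factor of an iteration, while failure of $\tilde c_1(x_2^{j_0})>x_n^{j_0+1}$ triggers at the last factor and carries the deficit into the initial factor of the following iteration; in either case coprimality still yields one triggering iteration per block of $k$ with the same one-superscript loss. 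The main technical obstacle is verifying that the superscript deficit created by the failure cannot be recovered by any later factor in the same iteration; this relies on the rigid ``skip one, land on the next'' action of each $\tilde c_i$ on its invariant set $\tilde X_i$ together with monotonicity off $\tilde X_i$, which together force a one-superscript deficit to propagate unchanged through every remaining subfactor.
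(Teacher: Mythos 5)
Your overall strategy — following the algorithm's orbit, recording a one‑notch deficit each time the failing inequality is hit, and using coprimality to guarantee the deficit accumulates — is correct in spirit, and the deficit‑propagation claim at the end is a genuine point that you correctly flag and that does hold. However, your key quantitative claim, that \emph{exactly} one iteration per block of $k$ triggers the failure, is false. It is justified by the observation that $\{l(2n-1)\bmod k : 0\le l<k\}$ covers every residue, but that only describes an \emph{unobstructed} arithmetic progression with common difference $2n-1$. Once a trigger occurs, the increment drops to $2n-2$, the superscript sequence is no longer arithmetic, and the ``one per block'' count breaks. Concretely, take $k=5$, $n=2$ (so $2n-1=3$), trigger residue $r_0=3$: starting at $J=0$ the superscripts are $0,3,5,8,10,13,\ldots$ and the residue $3$ recurs on every \emph{other} iteration, not once per five. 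The parenthetical ``(even as the starting superscript shifts from losses in previous blocks)'' is exactly where the gap is; it is asserted but not addressed.

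The argument can be repaired by weakening ``exactly one'' to ``at least one,'' which is all your induction needs: if no trigger occurred during $k$ consecutive iterations, the superscript would have increased by exactly $2n-1$ at each of those steps, so by coprimality the residues would sweep out all of $\Z/k\Z$ and hence hit $r_0$ — contradiction. With that fix, $\tilde f^{mk}(x_1^0)\le x_1^{mk(2n-1)-m}$ and your conclusion $\rott(\tilde f)\le\frac{k(2n-1)-1}{k^2}<\frac{2n-1}{k}$ go through. By contrast, the paper sidesteps the whole counting issue: it runs the algorithm for a \emph{single} iteration starting at the failing point to obtain a deficit of one, producing $(\tilde c_1\cdots\tilde c_n)(x_1^0)\le x_1^{2n-2}$, then uses coprimality just once to choose $m,l>0$ with $m(2n-1)-1=kl$, so that $m-1$ further unobstructed iterations land exactly on $x_1^{kl}=x_1^0+l$. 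This yields $\rott\le l/m<\frac{2n-1}{k}$ directly, with no need to track further triggers or to prove anything about recurrence of the failure. Your route gives a valid (slightly weaker) bound, but at the cost of the subtle block‑counting lemma that, as written, is incorrect.
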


\begin{proof}
Note that rotation the number of a word in $\tilde{c}_i$ is invariant under cyclic permutations of the letters, meaning that $\rott(\tilde{c}_1, ... \tilde{c}_n) = \rott(\tilde{c}_n \tilde{c}_1 ... \tilde{c}_{n-1})$, etc.   The proofs of each of the inequalities above are identical, after applying a cyclic permutation and re-indexing the $x_i^j$ appropriately.   So we will give a proof of just the third inequality.  We prove the contrapositive via a straightforward computation. 

Suppose there exists $j$ such that $\tilde{c}_n(x_1^j) \leq x_{n-1}^{j+1}$.  After relabeling, we may assume that $j=0$, so $\tilde{c}_n(x_1^0) \leq x_{n-1}^{1}$.  Then
\begin{equation} \label{eq A}
\tilde{c}_{n-1} \tilde{c}_n(x_1^j) \leq \tilde{c}_{n-1}(x_{n-1}^1) = x_{n-1}^2.
\end{equation}
Considering the action of the $c_i$ on $\mathbf{X}$, we compute that 
$$(c_1 c_2 ... c_{n-2}) \cdot x_{n-1}^2 = x_1^{2n-2}$$
hence $(\tilde{c}_1 \tilde{c}_2 ... \tilde{c}_{n-2})(x_{n-1}^2) \leq x_1^{2n-2}$. Combining this inequality with \eqref{eq A} gives 
\begin{equation} \label{eq B}
(\tilde{c}_1 \tilde{c}_2 ... \tilde{c}_n)(x_{1}^0) \leq x_1^{2n-2}.
\end{equation}

Using the fact that $2n-1$ and $k$ are relatively prime, take integers $m >0$ and $l>0$ such that $m(2n-1) -1 = kl$.  
Note that $\frac{l}{m} = \frac{2n-1-1/m}{k} < \frac{2n-1}{k}$.  

The orbit computation from Example \ref{more general example} 

shows that $$(c_1 c_2 .... c_n)^{m-1}\cdot x_1^{2n-2} = x_1^{2n-2 + (m-1)(2n-1)} = x_1^{m(2n-1) -1} = x_1^{kl}$$
and so $$(\tilde{c}_1 \tilde{c}_2 ...\tilde{c}_n)^{m-1} (x_1^{2n-2}) \leq x_1^{kl}$$
Together with \eqref{eq B}, this implies that 
$$(\tilde{c}_1 \tilde{c}_2 ...\tilde{c}_n)^{m} (x_1^0) \leq x_1^{kl} = x_1^0 + l.$$
This implies that $\rott(\tilde{c}_1 \tilde{c}_2 ...\tilde{c}_n)^m \leq l$, so $\rott(\tilde{c}_1 \tilde{c}_2 ...\tilde{c}_n) \leq \frac{l}{m} < \frac{2n-1}{k}$, which is what we needed to show.  

\end{proof}

The next two corollaries of Proposition \ref{max rot prop} give strong constraints on the location of other periodic points of the homeomorphisms $c_i$.  

\begin{corollary} \label{periodic points cor}
Let $c_1, c_2, ... c_n \in \Homeo_+(S^1)$ be as in Example \ref{more general example}. Assume $k$ and $2n-1$ are relatively prime and that $\rott(\tilde{c}_1\tilde{c}_2...\tilde{c}_n)$ attains its maximal value of $\frac{2n-1}{k}$.  
Let $y_i \in S^1$ be any periodic point for $c_i$, then for any lift $\tilde{y}_i$ of $y_i$, there exists $j$ such that 

\centering $\left\{ \begin{array}{ll}  
\tilde{y}_1 \in (x_n^{j-1}, x_2^j) \\
\tilde{y}_i \in (x_{i-1}^j, x_{i+1}^j) & \mbox{for } i \neq 1, n \\
\tilde{y}_n \in (x_{n-1}^{j-1}, x_1^j)  
\end{array} \right.$
\end{corollary}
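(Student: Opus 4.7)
The plan is to argue by contradiction, using Proposition \ref{max rot prop} to show that if a periodic point $\tilde{y}_i$ of $\tilde{c}_i$ lay outside the prescribed open intervals, then $\tilde{c}_i$ would carry $\tilde{y}_i$ too far to the right to close up into a finite orbit modulo integer translation. I would treat the three cases ($i \neq 1, n$; $i = 1$; $i = n$) in parallel, since they are structurally identical; for concreteness I describe the case $i \neq 1, n$.

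First, I observe that the complement in $\R$ of $\bigcup_{j} (x_{i-1}^j, x_{i+1}^j)$ is the union of closed \emph{forbidden} intervals $[x_{i+1}^j, x_{i-1}^{j+1}]$. Suppose for contradiction that $\tilde{y}_i$ lies in a forbidden interval; after an integer translation I may assume $\tilde{y}_i \in [x_{i+1}^0, x_{i-1}^1]$. Since $\rott(\tilde{c}_i) = 1/k$ and $y_i$ is periodic for $c_i$, the lift satisfies $\tilde{c}_i^k(\tilde{y}_i) = \tilde{y}_i + 1$. Combined with $\tilde{y}_i \leq x_{i-1}^1$ and the periodicity relation $x_{i-1}^{j+k} = x_{i-1}^j + 1$, this produces the upper bound
\[
\tilde{c}_i^k(\tilde{y}_i) \;\leq\; x_{i-1}^{k+1}.
\]

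The rest of the argument derives a matching lower bound contradicting the above. I would prove by induction on $m$ that $\tilde{c}_i^m(\tilde{y}_i) > x_{i-1}^{m+1}$ for every $m \geq 1$. The base case follows immediately from monotonicity of $\tilde{c}_i$ together with the key inequality $\tilde{c}_i(x_{i+1}^0) > x_{i-1}^2$ of Proposition \ref{max rot prop}. For the inductive step, the lexicographic ordering of the points of $\mathbf{X}$ gives $x_{i-1}^{m+1} > x_{i+1}^m$, so the inductive hypothesis upgrades to $\tilde{c}_i^m(\tilde{y}_i) > x_{i+1}^m$; applying Proposition \ref{max rot prop} once more then yields $\tilde{c}_i^{m+1}(\tilde{y}_i) > x_{i-1}^{m+2}$. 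Setting $m = k$ contradicts the upper bound, completing the case $i \neq 1, n$. The cases $i = 1$ and $i = n$ proceed along the same lines using the inequalities $\tilde{c}_1(x_2^j) > x_n^{j+1}$ and $\tilde{c}_n(x_1^j) > x_{n-1}^{j+1}$, respectively.

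The main technical obstacle is verifying that the inductive step closes, i.e., that each application of $\tilde{c}_i$ advances the $x_{i-1}$-index by exactly one. This reduces to the combinatorial fact $x_{i-1}^{m+1} > x_{i+1}^m$, which is a direct consequence of the lexicographic ordering hypothesis; without this ordering the chain of inequalities does not propagate. A minor edge case --- where $\tilde{y}_i$ happens to coincide with a point of $\tilde{X}_{i'}$ for some $i' \neq i$, placing it on the boundary of a forbidden interval --- causes no trouble, because the inequalities from Proposition \ref{max rot prop} are strict.
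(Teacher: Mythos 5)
Your proof is correct and follows essentially the same approach as the paper: show that if $\tilde{y}_i$ sits in the forbidden closed interval $[x_{i+1}^j, x_{i-1}^{j+1}]$, then iterating the strict inequality $\tilde{c}_i(x_{i+1}^j) > x_{i-1}^{j+2}$ from Proposition \ref{max rot prop} pushes $\tilde{c}_i^k(\tilde{y}_i)$ strictly past $x_{i-1}^{j+1} + 1$, contradicting periodicity. The paper derives $1 < \tilde{c}_i^k(y) - y < 2$ and concludes the displacement is not an integer, whereas you invoke periodicity up front to pin $\tilde{c}_i^k(\tilde{y}_i) = \tilde{y}_i + 1$ and close the contradiction directly --- a slight streamlining that eliminates the need for the upper bound $\tilde{c}_i^k(y) < y + 2$, but the key iteration is identical.
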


Again, the three equations are all equivalent modulo cyclic permutations.  One can interpret each as saying that \emph{any} lift $\tilde{y}_i$ of a periodic point for $c_i$ is in the same connected component of $\R \setminus ( \bigcup \limits_{m \neq i} \tilde{X}_m )$ as a lifted periodic point $x_i^j$.  

\begin{proof}
Again, we will just prove one of the inequalities, this time the second.  Let $y$ denote $\tilde{y}_i$ and suppose that $y \notin (x_{i-1}^{j-1}, x_{i+1}^j)$ for any $j$.  Then there exists $j$ such that 
$x_{i+1}^j \leq y \leq x_{i-1}^{j+1}$.

Then we have 
$$\tilde{c}_i(y) \geq \tilde{c}_i(x_{i+1}^j) > x_{i-1}^{j+2}$$ 
with the second inequality following from Proposition \ref{max rot prop}. 
Applying $(\tilde{c}_i)^{k-1}$ to both sides, we have 
$$(\tilde{c}_i)^k(y) > (\tilde{c}_i)^{k-1}(x_{i-1}^{j+2} )= x_{i-1}^{j+k+1} = x_{i-1}^{j+1} +1 \geq y+1$$
Furthermore, we know that $y \leq x_{i-1}^{j+1} < x_1^{j+1}$ and applying $(\tilde{c}_i)^k$ to both sides gives
$$(\tilde{c}_i)^k(y) < x_i^{j+1} + 1 < y+2.$$  
Thus, $1< (\tilde{c}_i)^k(y) - y < 2$, so $(\tilde{c}_i)^k(y) - y$ is not an integer and $y$ is not a lift of a periodic point for $c_i$.    

\end{proof}

\begin{corollary} \label{orbit structure cor}
Let $c_1, c_2, ... c_n \in \Homeo_+(S^1)$ be as in Example \ref{more general example}, assume $k$ and $2n-1$ are relatively prime and that $\rott(\tilde{c}_1\tilde{c}_2...\tilde{c}_n)$ attains its maximal value of $\frac{2n-1}{k}$. 
For each $i$, let $Y_i$ be \emph{any} periodic orbit of $c_i$, and let $\tilde{Y_i}$ be its lift to $\R$.  Then the points of $\tilde{Y_i}$ may be enumerated $y_i^j$ with (lexicographic) order 
$$\ldots   y_1^j < y_2^j < \ldots  < y_n^j < y_1^{j+1} <  \ldots $$  
\end{corollary}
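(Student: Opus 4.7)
The plan is to combine the slot localization from Corollary \ref{periodic points cor} with a counting and type analysis to pin down each lift of $Y_i$, and then invoke the maximality hypothesis via the Calegari--Walker algorithm applied to the $Y_i$'s to force the cyclic order on $S^1$ into the lexicographic pattern.

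First I would show each slot $S_i^m$ (the connected component of $\R \setminus \bigcup_{l \neq i} \tilde{X}_l$ containing $x_i^m$) contains exactly one lift of $Y_i$. By Corollary \ref{periodic points cor}, every lift lies in some $S_i^m$. Using Proposition \ref{max rot prop}, any $y \in S_i^m$ satisfies $y > x_{i+1}^{m-1}$ (for $2 \leq i \leq n-1$, the boundary cases $i=1,n$ being entirely analogous), so $\tilde{c}_i(y) > \tilde{c}_i(x_{i+1}^{m-1}) > x_{i-1}^{m+1}$, a point strictly past the right endpoint of $S_i^m$. Consequently successive lifts $y^j$ and $\tilde{c}_i(y^j) = y^{j+1}$ of $Y_i$ sit in distinct slots, which together with the counts (each of $\tilde{Y}_i$ and $\{S_i^m\}_m$ has $k$ elements per unit interval of $\R$) forces exactly one lift per slot; label it $y_i^m$. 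Exploiting that $\tilde{c}_i$ carries $[x_i^{m-1}, x_i^m]$ onto $[x_i^m, x_i^{m+1}]$ and that $\tilde{c}_i(y_i^m) = y_i^{m+1}$, one then sees that each $Y_i$ is uniformly of ``Type A'' (with $y_i^m \in (x_{i-1}^m, x_i^m)$ for all $m$) or uniformly of ``Type B'' (with $y_i^m \in (x_i^m, x_{i+1}^m)$ for all $m$).

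In the homogeneous case where all $Y_i$ share the same type, inserting comparisons with the $x_i^m$'s gives the ordering $y_1^m < y_2^m < \ldots < y_n^m < y_1^{m+1}$ directly (e.g. $y_i^m < x_i^m < y_{i+1}^m$ when everyone is Type A). The hard part will be the mixed case: for a pair $(Y_i, Y_{i+1})$ of types $(B, A)$, both $y_i^m$ and $y_{i+1}^m$ can lie in the overlap $(x_i^m, x_{i+1}^m)$ of their slots, and the slot data alone does not determine their relative order. To force the cyclic order of $\bigcup_i \bar{Y}_i$ on $S^1$ to match $(y_1, y_2, \ldots, y_n)^k$ (up to cyclic shift), I would apply the Calegari--Walker algorithm to the $\tilde{Y}_i$'s in their actual cyclic order on $S^1$, repeating the orbit computation of Example \ref{more general example} but with the $Y$'s playing the role of the $X$'s. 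Arguing as in Example \ref{double points example} and using $\gcd(2n-1, k) = 1$ to rule out non-lexicographic combinatorial types, any cyclic order on $S^1$ other than the lexicographic one yields a strictly smaller upper bound for $\rott(\tilde{c}_1 \ldots \tilde{c}_n)$ than $\frac{2n-1}{k}$; since this lifted rotation number is assumed to attain its maximal value, the cyclic order must be lexicographic, and the desired enumeration $y_i^j$ can then be read off directly.
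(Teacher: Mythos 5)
Your slot localization via Corollary \ref{periodic points cor} and the Type A / Type B classification are correct, and you have correctly identified the crux: when $Y_i$ is Type B and $Y_{i+1}$ is Type A, both $y_i^m$ and $y_{i+1}^m$ land in the overlap $(x_i^m, x_{i+1}^m)$, and the slot data relative to the fixed reference orbits $X_1,\ldots,X_n$ does not resolve their order. But your proposed fix has a real gap. You want to rerun the Calegari--Walker algorithm on the $Y_i$'s and argue ``as in Example \ref{double points example}'' that any non-lexicographic cyclic order forces $\rott(\tilde c_1\cdots\tilde c_n) < \frac{2n-1}{k}$. Example \ref{double points example}, however, treats the degeneration where some $x_i^j = x_{i+1}^j$ \emph{coincide}; it says nothing about a cyclic order in which consecutive points come from the same orbit or are transposed. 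Establishing that \emph{every} non-lexicographic interleaving strictly decreases the CW upper bound is a separate combinatorial claim, not obviously true and certainly not proved in your sketch. As written, the mixed case remains open.

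The paper avoids this entirely by a much simpler device: it replaces one orbit at a time. It suffices to show that for each fixed $i$, the system $X_1, \ldots, X_{i-1}, Y_i, X_{i+1}, \ldots, X_n$ again has the lexicographic ordering, and this follows directly from Corollary \ref{periodic points cor} plus an index-consistency check that $(\tilde c_i)^d(y_i^j) \in (x_{i-1}^{j+d}, x_{i+1}^{j+d})$ for all $d$. One then iterates: after the replacement $X_1 \to Y_1$, the new system still satisfies the hypotheses of Example \ref{more general example} (the ordering has just been re-verified, and the maximality of $\rott$ is a property of the homeomorphisms, not the chosen orbits), so Corollary \ref{periodic points cor} applies again to localize $Y_2$ \emph{relative to $Y_1$} rather than relative to $X_1$ --- and this second application is precisely what pins down the order of $y_1^m$ and $y_2^m$ in the overlap region. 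Your mixed case never arises because adjacent $Y$'s are never compared directly against each other using only the original $X$'s. The moral: the statement you should actually prove in each step is the single-replacement version, and the implicit induction then does the rest; trying to handle all $n$ replacements simultaneously is what creates the difficulty you ran into.
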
 

In other words, Corollary \ref{orbit structure cor} state that  \emph{all} choices of periodic orbits for the homeomorphisms $c_i$ have the same combinatorial structure as the original choices $X_i$.  

\begin{proof}
It suffices to prove that for any $i$ and any choice of periodic orbit $Y_i$ for $c_i$, the sets $X_1, ... X_{i-1}, Y_i, X_{i+1}, ... X_n$ have lifts that can be enumerated $\tilde{X_m^j} = \{x_m^j\}$, $\tilde{Y_i^j} = \{x_i^j\}$, with the ordering
\begin{equation} \label{x y order}
\ldots   x_1^j < x_2^j < \ldots < x_{i-1}^j < y_i^j < x_{i+1}^j < ...  < x_n^j < x_1^{j+1}  \ldots
\end{equation}

So let $Y_i$ be a periodic orbit of $c_i$ with lift $\tilde{Y}_i \subset \R$, and let $y \in \tilde{Y_i}$.  By Corollary \ref{periodic points cor}, there exists $j$ such that $x_{i-1}^j < y < x_{i+1}^j$.    Let $y = y_i^j$.  We claim that $\tilde{c}_i(y_i^j) \in (x_{i-1}^{j+1}, x_{i+1}^{j+1})$. 
Indeed, we know that $\tilde{c}_i(y_i^j) \in (x_{i-1}^{m}, x_{i+1}^{m+1})$ for some $m$, and using the inequality above, we have 
$$x_i^j = \tilde{c}_i(x_i^{j-1}) < \tilde{c}_i(y_i^j) < \tilde{c}_i(x_{i+1}^j) < \tilde{c}_i(x_i^{j+1}) = x_i^{j+2}$$   
so it follows that $m = j-1$.  
The same argument now shows that $(\tilde{c}_i)^d(y_i^j) \in (x_{i-1}^{j+d}, x_{i+1}^{j+d})$ for all $d \in \Z$.  Defining $y_i^{j+d} := (\tilde{c}_i)^d(y_i^j)$, we have a complete enumeration of the points of $\tilde{Y}_i$ satisfying the condition \eqref{x y order}.  

\end{proof}

\begin{remark} We conclude this section by noting (for the experts) that Proposition \ref{max rot prop} should really be interpreted as a more general version of Matsumoto's Theorem 2.2 in \cite{Matsumoto} regarding ``tame" elements of $\Homeo_\Z(\R)$.  
\end{remark} 

\subsection{Homeomorphisms with fixed points}  \label{fixed points subsec}
When describing the Calegari--Walker algorithm in Section \ref{CW subsec}, we mentioned that if $\rot(\tilde{c}_i) = 0$, then one can use \emph{any} finite subset $X_i$ of $\fix(c_i)$ as input for the algorithm.   The following example shows that the resulting bound on $\rott(w(\tilde{c}_1, ... \tilde{c}_n))$ may depend on the choice of the $X_i$.  
This, and the examples that follow, will also play a role in the proof of Theorem \ref{rot rig thm}.  

\begin{example} \label{different estimates ex} 
Let $c_1$ and $c_2$ be elements of $\Homeo_+(S^1)$ with $\rot(c_1) = \rot(c_2) = 0$.  Choose lifts $\tilde{c}_i$ of $c_i$ such that $\rott(\tilde{c}_i) = 0$.    Suppose there exist subsets $X_1 \subset \fix(c_1)$ and $X_2 \subset \fix(c_2)$ each of cardinality $k$, and such that every two points of $X_1$ are separated by a point of $X_2$ (as in Figure \ref{alternating pairs fig}).    We call such an arrangement of fixed points \emph{k alternating pairs}.  

 \begin{figure*}
   \labellist 
  \small\hair 2pt
   \pinlabel $\fix(c_1)$ at 150 150 
   \pinlabel $\fix(c_2)$ at 180 040
   \endlabellist
  \centerline{
    \mbox{\includegraphics[width=1.1in]{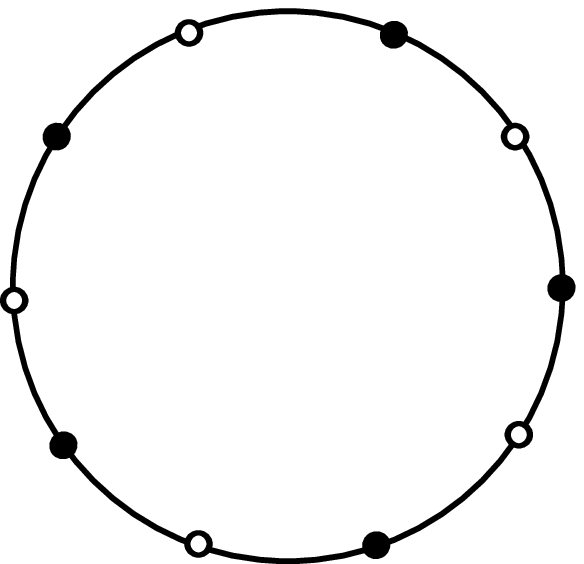}}}
 \caption{ \small $k$ ``alternating pairs" of fixed points for $c_1$ and $c_2$}
\vspace{-2pt} \centerline{\small \mbox{Dark circles are fixed points of $c_1$, white circles are fixed points of $c_2$}}
  \label{alternating pairs fig}
  \end{figure*}
  
Then the lifts $\tilde{X_1}$ and $\tilde{X_2}$ can be ordered 
$$ ... x_1^0 < x_2^0 < x_1^1 < x_2^1 <x_1^2 < x_2^2 ... $$
and we have a $1/k$-periodic orbit 
$$ x_1^0 \overset{c_2} \longmapsto < x_2^0 \overset{c_1} \longmapsto < x_1^1 \overset{c_2} \longmapsto x_2^1\overset{c_1} \longmapsto ...  \overset{c_2} \longmapsto x_2^{k-1} \overset{c_1} \longmapsto x_1^{k}$$
Hence, $\rott(\tilde{c}_1 \tilde{c}_2) \leq 1/k$.  

Now we work through the algorithm with different choices of subsets of $\fix(c_1)$ and $\fix(c_2)$ as input.  Suppose, for instance, that we choose subsets $X_i' \subset \fix(c_i)$  that are singletons.  Then the lifts of points $x_i^j \in \tilde{X}_i'$ are again ordered 
$$ ... x_1^0 < x_2^0 < x_1^1 < x_2^1 ... $$
but now $x_i^{j+1} = x_i^j +1$, and 
$$ x_1^0 \overset{c_2} \longmapsto  x_2^0 \overset{c_1} \longmapsto  x_1^1$$ is a 1-periodic orbit.  This gives the (weaker) bound $\rott(\tilde{c}_1 \tilde{c}_2) \leq 1$.  
\end{example}

The computation in Example \ref{different estimates ex} above illustrates the following general phenomenon.  

\begin{proposition}  \label{different estimates prop} 
Let $X_i' \subset X_i$ be finite subsets of $\fix(c_i)$, and let $w$ be any word in $\tilde{c}_1, ... \tilde{c}_n$.   Let $r$ be the estimate $\rott(w) \leq r$ produced by the Calegari--Walker algorithm with inputs $X_i$, and let $r'$ be the estimate $\rott(w) \leq r'$ produced by the algorithm with input $X_i'$.  Then $r' \leq r$.  
\end{proposition}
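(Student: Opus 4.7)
The plan is to deduce the inequality from the realization half of the Calegari--Walker algorithm, namely Part 2 of Proposition \ref{algorithm prop}. The essential point is that the estimate produced by the algorithm is actually \emph{achieved} by a suitable deformation of the input homeomorphisms, so the larger, more refined fixed-point set $X_i$ can only yield a tighter bound than the smaller set $X_i'$.

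Concretely, I would first apply Part 2 of Proposition \ref{algorithm prop} with input $X_i$ to produce lifts $\tilde{d}_i \in \Homeo_\Z(\R)$, obtained by deforming $\tilde{c}_i$ through lifts that fix $\tilde{X}_i$ pointwise, with $\rott(w(\tilde{d}_1, \ldots, \tilde{d}_n)) = r$. Since $\tilde{X}_i' \subset \tilde{X}_i$, the $\tilde{d}_i$ also fix $\tilde{X}_i'$ pointwise, and each $d_i$ still has rotation number zero. I would then observe that the algorithm's output depends only on the combinatorial data of the input sets and the rotation numbers of the $c_i$, not on the particular lifts used: the dynamical system of Section \ref{CW subsec} on $\mathbf{X}'$ is identical whether we build it from $\tilde{c}_i$ or from $\tilde{d}_i$. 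Thus running the algorithm on the deformed lifts $\tilde{d}_i$ with input $X_i'$ again produces the same estimate $r'$, and Part 1 of Proposition \ref{algorithm prop} applied to $\tilde{d}_i$ with input $X_i'$ then yields $\rott(w(\tilde{d}_1, \ldots, \tilde{d}_n)) \leq r'$. Combining with the equality $\rott(w(\tilde{d}_1, \ldots, \tilde{d}_n)) = r$ gives the claimed monotonicity, $r \leq r'$, which matches the phenomenon exhibited in Example \ref{different estimates ex} that enlarging the fixed-point set only sharpens the Calegari--Walker estimate.

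I do not anticipate any serious obstacle. The only step that demands any care is the assertion that the algorithm's output $r'$ depends only on its combinatorial input and the rotation numbers of the $c_i$, not on the particular lifts fed into the algorithm. This is immediate from the construction of the dynamical system on $\mathbf{X}'$ in Section \ref{CW subsec}: the action of each letter $c_i$ on $\mathbf{X}'$ is specified entirely by the order type of $\tilde{X}_i'$ inside $\R$ and by $\rott(\tilde{c}_i) = 0$, both of which are preserved under the deformation $\tilde{c}_i \leadsto \tilde{d}_i$.
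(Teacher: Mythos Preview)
The paper does not prove this proposition (the proof is ``elementary and left to the reader''), so there is no argument to compare against directly. Your approach via the two halves of Proposition~\ref{algorithm prop} is sound and yields $r \leq r'$: the larger fixed set produces the sharper upper bound.

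Note, however, that this is the \emph{reverse} of the inequality as literally printed in the statement ($r' \leq r$). The surrounding text makes clear that $r \leq r'$ is what is intended: in Example~\ref{different estimates ex} the larger set of $k$ alternating pairs gives the bound $1/k$ while the smaller singleton subsets give only the weaker bound $1$, and Remark~\ref{doubled points rk} speaks of ``putting more information (a larger fixed set) into the Calegari--Walker algorithm'' as not always giving a \emph{better} estimate, i.e., the larger set can only improve or maintain the bound. So the printed inequality appears to be a typo, and you have established the correct direction.
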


The proof is elementary and we leave it to the reader, as we will not use this proposition in the sequel.  However, we do make one instructive remark which will come into play later.    

\begin{remark} \label{doubled points rk}
Note that the inequality in Proposition \ref{different estimates prop} may not be strict -- in other words, putting more information (a larger fixed set) into the Calegari--Walker algorithm does not always give a better estimate.  For example, suppose we modify Example \ref{different estimates ex} by giving each of the $k$ points of $X_1$ and $X_2$ a nearby ``double," as in Figure \ref{alternating doubles fig}, to create sets $X_1'$ and $X_2'$ each of cardinality $2k$.

The lifts $\tilde{X}_i' = \{x_i^j\}$ -- excuse the  abuse of notation-- are now ordered
$$ ... x_1^0 < x_1^1 < x_2^0 < x_2^1 < x_1^2 < x_1^3 < x_2^2 < x_2^3 ... $$
with $x_1^{j+2k} = x_1^{j} + 1$.  Then
$$ x_1^0 \overset{c_2} \longmapsto < x_2^0 \overset{c_1} \longmapsto < x_1^2 \overset{c_2} \longmapsto x_2^2 ...  \overset{c_2} \longmapsto x_2^{2k-2} \overset{c_1} \longmapsto x_1^{2k}$$ 
is again a $1/k$-periodic orbit, giving the estimate $\rott(\tilde{c}_1 \tilde{c}_2) \leq 1/k$.  
\end{remark} 

 \begin{figure*}
   \labellist 
  \small\hair 2pt
   \pinlabel $\fix(c_1)$ at 185 90 
   \pinlabel $\fix(c_2)$ at 175 040
   \endlabellist
  \centerline{
    \mbox{\includegraphics[width=1.3in]{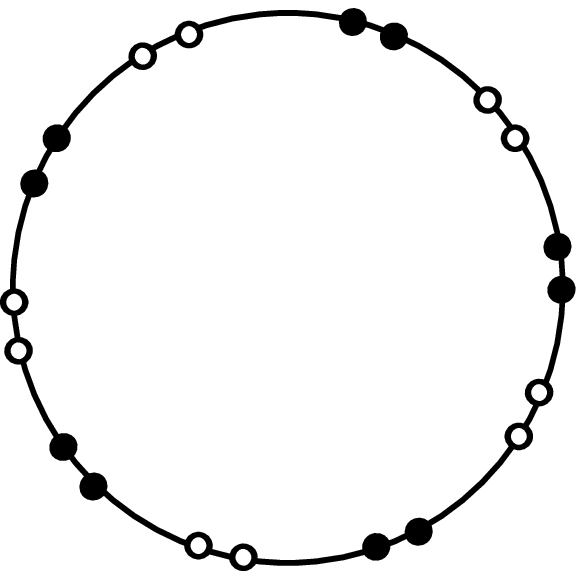}}}
    \vspace{-3pt}
 \caption{ \small $k$ \emph{doubled} alternating pairs of fixed points.}
\vspace{-2pt} \centerline{\small \mbox{Dark circles are fixed points of $c_1$, white circles are fixed points of $c_2$}}
  \label{alternating doubles fig}
  \end{figure*}

With these examples as tools, we move on to the proof of Theorem \ref{rot rig thm}. 

\section{Proof of Theorem \ref{rot rig thm}} \label{rot pf sec}
Fix a standard generating set $S = \{a_1, b_1, ... a_g, b_g\}$ for $\Gamma_g$.   We will use this generating set until the end of Section \ref{Euc alg subsec}.

Define $R_i: \Hom(\Gamma_g, \Homeo_+(S^1)) \to \R$ by 
$$R_i(\rho) = \rott[\tilde{\rho}(a_i), \tilde{\rho}(b_i)].$$  
Recall from our proof outline in Section \ref{mod subsec} that our first goal will be to identify show that maximal $\PSLk$ representations are local maxima for the functions $R_i$.  In other words, for any maximal $\PSLk$ representation $\rho_0$, there is an open neighborhood $U_i(\rho_0) \subset \Hom(\Gamma_g, \Homeo_+(S^1))$ such that $R_i(\rho) \leq R_i(\rho_0)$ holds for all $\rho \in U_i(\rho_0)$.   Our strategy -- which will also give us extra information that will be critical in later steps of the proof -- is to define sets $N_i$ of representations that share some characteristics of maximal $\PSLk$ representations.  In particular, our work in Section \ref{CW sec} will imply that $R_i(\rho) \leq R_i(\rho_0)$ for all $\rho \in N_i$ and any maximal $\PSLk$ representation $\rho_0$.  We will then identify \emph{interior points} of $N_i$ and show in particular that maximal $\PSLk$ representations are interior points.  

\subsection{Identifying local maxima of $R_i$} \label{technical subsec}
Let $\rho_0$ be a maximal $\PSLk$ representation.  We will work first under the assumption that $\rot(\rho_0(a_i)) =0$ holds for all $i$.  Then $\rot(\rho_0(a_i)^{-1}) = \rot(\rho_0(b a^{-1} b^{-1})) = 0$ as well.  The reduction to the case that $\rot(\rho_0(a_i)) =0$ is carried out in Section \ref{Euc alg subsec}.  

Fix $i \in \{1, 2, ... g\}$.  It will be convenient to set up the following notation, consistent with the notation in Section \ref{CW sec}.

\begin{notation} 
For $\rho \in \Hom(\Gamma_g, \Homeo_+(S^1))$ define 
$$c_1(\rho) := \rho(a_i),  \, \text{ and } \, c_2(\rho) := \rho(b_i a_i^{-1} b_i^{-1}).$$ 
\end{notation} 

\noindent Note that, for any choice of lifts $\tilde{\rho}(a_i)$ and $\tilde{\rho}(b_i)$ of $\rho(a_i)$ and $\rho(b_i)$ to $\Homeo_\Z(\R)$, the lifts 
 $$\tilde{c}_1(\rho) := \tilde{\rho}(a_i), \text{ and } \tilde{c}_2(\rho) := \tilde{\rho}(b_i) (\tilde{\rho}(a_i))^{-1} (\tilde{\rho}(b_i))^{-1}$$
 of $c_1(\rho)$ and $c_2(\rho)$ satisfy $\rott(\tilde{c}_1(\rho)\tilde{c}_2(\rho)) = R_i(\rho)$.   
 
Our goal is to use the work of Section \ref{CW sec} to give bounds for $\rott(\tilde{c}_1(\rho)\tilde{c}_2(\rho))$, given certain combinatorial data.  
We motivate this by examining the combinatorial structure of $\fix(c_1(\rho_0))$ and $\fix(c_2(\rho_0))$, where $\rho_0$ is our maximal $\PSLk$ representation.  

\boldhead{Structure of $\fix(c_1(\rho_0))$ and $\fix(c_2(\rho_0))$.}
Let $X_1 = \fix(c_1(\rho_0))$ and $X_2 = \fix(c_2(\rho_0))$.    We claim these sets are ordered in $S^1$ exactly as the ``doubled alternating pairs" in Figure \ref{alternating doubles fig}.   To see this, consider first the action of a pair of standard generators $(a_i, b_i)$ under an injective, Fuchsian representation $\nu: \Gamma_g \to \PSL(2,\R)$ with Euler number $2g-2$.  This is depicted in Figure \ref{fuchsian fig}.  

 \begin{figure*}
   \labellist 
  \small\hair 2pt
   \pinlabel $\nu(a_i)$ at 115 53 
   \pinlabel $\nu(b_i)$ at 50 115
   \pinlabel $\nu(b_ia_i^{-1}{b_i}^{-1})$ at 122 88
   \endlabellist
  \centerline{
    \mbox{\includegraphics[width=1.5in]{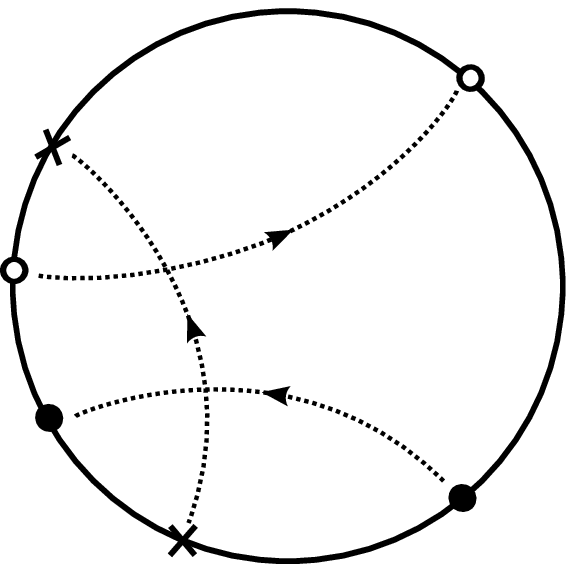}}}
 \caption{\small Dynamics of the images of standard generators under a Fuchsian representation $\nu$}
\vspace{-2pt} \centerline{\small \mbox{Dotted arrows indicate the axes of hyperbolic elements acting on the disc}}
  \label{fuchsian fig}
  \end{figure*}
\vspace{2pt} 

Our maximal $\PSLk$ representation $\rho_0$ is a lift of such a representation $\nu$ to the $k$-fold cover of $S^1$, with $X_1$ and $X_2$ the lifts of $\fix(\nu(a_i))$ and $\fix(\nu(b_i a_i^{-1} b_i ^{-1}) )$, respectively.  The fact that the points of
$\fix(\nu(a_i))$ do not separate those of $\fix(\nu(b_i a_i^{-1} b_i ^{-1} ))$ on $S^1$ implies that points of $X_1$ and $X_2$ on the $k$-fold cover appear in ``doubled alternating pairs" exactly as in Figure \ref{alternating doubles fig} from Remark \ref{doubled points rk}.  

As in Example \ref{different estimates ex}, the lifted fixed points $\tilde{X_1} = \{x_1^j\}$ and $\tilde{X_2} = \{x_2^j\}$ may be ordered
$$... x_1^0 < x_1^1 < x_2^0 < x_2^1 < x_1^2 < x_1^3 < x_2^2 < x_2^3 ... $$
We also know that $\tilde{\rho_0}(b_i)(\tilde{X}_1) = \tilde{X}_2$.  Since $\rott(\tilde{\rho_0}(b_i)) = \frac{n}{k}$ for some $n$, it follows from the dynamics of the action of $\nu(b_i)$ in the Fuchsian case (as in Figure \ref{fuchsian fig}) that 
$\tilde{\rho}_0(b_i)(x_1^{2j}) = x_2^{2j+2n-1}$ and $\tilde{\rho}_0(b_i)(x_1^{2j+1}) = x_2^{2j+2n}$ for all $j$.  See Figure \ref{containment fig}.

\boldhead{Representations with good fixed sets.}
For general $\rho \in \Hom(\Gamma_g, \Homeo_+(S^1))$, we say that $c_1(\rho)$ has a \emph{good fixed set} if it fixes a set of points that ``look like" the set $X_1(\rho_0)$ above.

 \begin{figure*}
   \labellist 
  \small\hair 2pt
   \pinlabel $x_1^{2n-1}$ at 30 25 
   \pinlabel $x_2^{2n-2}$ at  85 25
    \pinlabel {$x_2^{2n-1}\hspace{-4pt}=\hspace{-2pt} \tilde{\rho}_0(b_i)x_1^0$} at 160 25
    \pinlabel $x_1^{2n}$ at 230 25
    \pinlabel $x_1^{2n+1}$ at 290 25 
   \pinlabel {$x_2^{2n}\hspace{-2pt}=\hspace{-2pt} \tilde{\rho}_0(b_i)x_1^{1}$} at 360 25
    \pinlabel $x_2^{2n+1}$ at 430 25 
     \pinlabel $x_1^{2n+2}$ at 490 25 
   \endlabellist
  \centerline{
    \mbox{\includegraphics[width=5.3in]{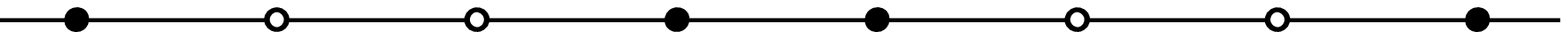}}}
 \caption{\small Order of points $x_i^j$ and $\tilde{\rho}_0(b_i)x_i^j$}
  \label{containment fig}
  \end{figure*}

\begin{definition} 
For $\rho \in \Hom(\Gamma_g, \Homeo_+(S^1))$, say that $X_1(\rho) \subset \fix(c_1(\rho))$ is a \emph{good fixed set} for $c_1(\rho) = \rho(a_i)$ if the following hold: 
\begin{enumerate}[i)]
\item $X_1(\rho)$ is of cardinality $2k$ 
\item If we define $X_2(\rho)$ by $X_2(\rho) := \rho(b_i)(X_1(\rho)) \subset \fix(\rho(b_i a_i^{-1} b_i^{-1})) = \fix(c_2(\rho))$, then the sets of lifts $\tilde{X_1}(\rho) = \{x_1^j(\rho)\}$ and $\tilde{X_2}(\rho) = \{x_2^j(\rho)\}$ can be ordered 
$$... x_1^0(\rho) < x_1^1(\rho) < x_2^0(\rho) < x_2^1(\rho) < x_1^2(\rho) < x_1^3(\rho) < x_2^2(\rho) < x_2^3(\rho) ... $$
\item With the ordering above, we have 
\begin{align*}
\tilde{\rho}(b_i)( x_1^{2j}(\rho) ) = x_2^{2j+2n-1}(\rho)  \mbox{  and   }\,
\tilde{\rho}(b_i)( x_1^{2j+1}(\rho) ) = x_2^{2j+2n}(\rho)
\end{align*}
\end{enumerate} 
\end{definition}
\noindent Example \ref{different estimates ex} shows that if $\rho(a_i)$ has a good fixed set, then $\rott[\tilde{\rho}(a_i), \tilde{\rho}(b_i)] = R_i(\rho) \leq 1/k$.   

Let $N_i$ be the closure in $\Hom(\Gamma_g, \Homeo_+(S^1)$ of the set $$\{ \rho \in \Hom(\Gamma_g, \Homeo_+(S^1)) \mid \rho(a_i) \text{ has a good fixed set }\}.$$  
Then $R_i(\rho) \leq 1/k$ holds for all $\rho \in N_i$ as well.  We claim that representations in $N_i$ where $R_i(\rho)$ attains the maximum value are \emph{interior points} of $N_i$.  

\begin{proposition}[\textbf{Interior points of $N_i$}] \label{int Ni prop}
Let $\rho \in N_i$ satisfy $R_i(\rho)  = 1/k$.  Then $N_i$ contains an open neighborhood of $\rho$ in $\Hom(\Gamma_g, \Homeo_+(S^1))$.  
\end{proposition}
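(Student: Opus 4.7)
The plan is to show that maximality $R_i(\rho) = 1/k$ forces $\rho$ itself to carry a good fixed set whose combinatorial structure is stable under $C^0$ perturbation, making the good-fixed-set condition open at $\rho$. Since $\rho \in N_i$, I would first pick a sequence $\rho_n \to \rho$ with $\rho_n(a_i)$ carrying good fixed sets $X_1(\rho_n) = \{x_1^j(\rho_n)\}$ of cardinality $2k$ and $X_2(\rho_n) = \rho_n(b_i)X_1(\rho_n)$. After passing to a subsequence and using compactness of $(S^1)^{2k}$, the points converge to fixed points $x_1^j(\rho)$ of $c_1(\rho)$ and $x_2^j(\rho) = \tilde\rho(b_i)x_1^j(\rho)$ of $c_2(\rho)$, respecting the doubled alternating order weakened from strict to non-strict.

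The next step is to use $R_i(\rho) = 1/k$ to rule out all collisions among these $4k$ limit points. The argument is the rotation-zero analogue of Proposition \ref{max rot prop}: if two adjacent points in the weak ordering coincide, then one refines the Calegari--Walker orbit computation of Example \ref{different estimates ex} by observing that at a collision the orbit ``skips'' the collided pair, and the equivariance $X_2 = \rho(b_i)X_1$ propagates such a skip to the $X_2$ side as well. Tracing through the resulting orbit produces an $\frac{l}{m}$-periodic orbit with $l/m < 1/k$, contradicting maximality. In combination with analogues of Corollaries \ref{periodic points cor} and \ref{orbit structure cor} for the rotation-zero case, this further shows that the dynamics of $c_1(\rho)$ on each arc $(x_1^{2j}(\rho), x_1^{2j+1}(\rho))$ and between consecutive pairs is topologically transverse: each fixed point is a one-sided attractor or a one-sided repeller, mirroring the lifted Fuchsian dynamics displayed in Figure \ref{fuchsian fig}.

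Topological transversality makes the fixed points robust. For any $\rho'$ sufficiently $C^0$-close to $\rho$, each of the $2k$ fixed points of $c_1(\rho)$ persists to a nearby fixed point $x_1^j(\rho')$ of $c_1(\rho')$ (the one-sided attractor/repeller dynamics survive uniform perturbation); the doubled alternating order is strict at $\rho$ and hence persists; and the defining equation $X_2(\rho') := \rho'(b_i)X_1(\rho')$ together with the image formula $\tilde\rho'(b_i)(x_1^{2j}(\rho')) = x_2^{2j+2n-1}(\rho')$ (etc.) hold by continuity of $\rho'(b_i)$. Thus $\rho'$ has a good fixed set, so $\rho' \in N_i$, and we obtain the desired open neighborhood of $\rho$ inside $N_i$.

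The main obstacle is the no-collision step. Naively, the Calegari--Walker bound does not drop when a doubled pair collapses to a single point: both the ``$k$ alternating singles'' arrangement of Example \ref{different estimates ex} and the ``$k$ doubled alternating pairs'' arrangement of Remark \ref{doubled points rk} realize the bound $1/k$. The strict inequality must therefore be extracted by coupling the local combinatorics of the collision with the equivariance $X_2 = \rho(b_i)X_1$, so that a collision on the $X_1$ side is forced to propagate through the orbit and create a genuinely shorter period. Formulating this fixed-point version of Proposition \ref{max rot prop} is the technical heart of Section \ref{technical subsec}.
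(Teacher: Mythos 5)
Your overall architecture (subsequence extraction, rule out collisions, perturb stable fixed points) matches the paper, and the first step is carried out correctly. However, there are two issues, one a detour and one a genuine gap.

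The collision step is overcomplicated. You flag it as ``the technical heart'' and propose a refined Calegari--Walker orbit computation, but the paper disposes of it much more cheaply. Because the ordering interlaces $x_1$- and $x_2$-points, and $X_2 = \rho(b_i) X_1$, \emph{any} collision among the limit points is forced (by Equation \eqref{containment eq}) to produce a point lying in $\fix(c_1(\rho)) \cap \fix(c_2(\rho))$. A common fixed point of $c_1(\rho)$ and $c_2(\rho)$ immediately gives $\rott(\tilde{c}_1(\rho)\tilde{c}_2(\rho)) = 0$, contradicting $R_i(\rho) = 1/k$. No refinement of Example \ref{different estimates ex} is required here.

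The stability step has a real gap. You write that ``each of the $2k$ fixed points of $c_1(\rho)$ persists to a nearby fixed point ... (the one-sided attractor/repeller dynamics survive uniform perturbation).'' But the limit points $x_1^j(\rho)$ are merely limits of the good fixed sets $X_1(\rho_n)$, and there is no reason for any of them to be a one-sided attractor or repeller: a priori $c_1(\rho)$ could coincide with the identity on a whole neighborhood of some $x_1^j(\rho)$, in which case that fixed point is wiped out by an arbitrarily small perturbation. The assertion that ``topological transversality'' holds at the $x_1^j(\rho)$ does not follow from the rotation-zero analogues of Corollaries \ref{periodic points cor}--\ref{orbit structure cor}, and in fact the paper makes no such claim. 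What the paper actually proves (via Lemmas \ref{monotone lemma 1}, \ref{monotone lemma 2} and \ref{attracting lemma}) is that there exist attracting and repelling fixed \emph{sets} $Z^m \subset \fix(\tilde{c}_1(\rho))$ located in certain intervals determined by the $x_i^j$'s and $\tilde{\rho}(b_i)$ --- intervals that need not contain any of the original $x_1^j(\rho)$. One then picks stable fixed points $z^m \in Z^m$ and perturbs those. Moreover, establishing that the $z^m$ form a good fixed set requires verifying the ordering relative to $\tilde{\rho}(b_i)$, which is the content of Claim \ref{Z claim}; this is not a continuity argument but a second use of the maximality hypothesis $R_i(\rho) \geq 1/k$ via the alternating-pairs bound. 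Your proposal treats this ordering as automatic (``hold by continuity of $\rho'(b_i)$''), which it is not once one is forced to abandon the $x_1^j(\rho)$ in favor of the $z^m$.
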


\begin{proof} 

Take $\rho \in N_i$ such that $R_i(\rho)= 1/k$.  Since $\rho \in N_i$, we may take a sequence of representations $\rho_n$ approaching $\rho$  such that each $\rho_n$ has a good fixed set $X_1(\rho_n) \subset \fix(c_1(\rho_n))$.   Since the space of $k$-tuples in $S^1$ is compact, after passing to a subsequence, we may assume that $X_1(\rho_n)$ converges to a $2k$-tuple of (a priori not necessarily distinct) points $X_1(\rho) \subset S^1$. We claim that 
\begin{enumerate}[i)]
\item $X_1(\rho)$ is a good fixed set for $c_1(\rho)$, and
\item  $X_1(\rho)$ contains ``stable fixed points", in a sense (to be made precise below) that will imply that all representations sufficiently close to $\rho$ will also have good fixed sets, and so lie in $N_i$.
\end{enumerate}

\boldhead{i) Proof that $X_1(\rho)$ is a good fixed set} 
That $\rho_n$ converges to $\rho$ implies that $X_1(\rho) \subset \fix(c_1(\rho))$.  
Let $\tilde{X}_1(\rho_n) \subset \R$ be the set of all lifts of points in $X_1(\rho_n)$. 
If we choose to enumerate $X_1(\rho_n)$ so that the points $x_1^0(\rho_n)$ stay within a compact set, then after a further subsequence we may assume that for each $j$, the sequence $x_1^j(\rho_n)$ converges to a point $x_1^j(\rho) \in \tilde{X_1}(\rho)$, and so $x_2^j(\rho_n)$ converges to some $x_2^j(\rho) \in \tilde{X}_2(\rho)$ as well.  These points are then ordered
\begin{equation} \label{leq order eq}
...  x_1^0(\rho) \leq x_1^1(\rho) \leq x_2^0(\rho) \leq x_2^1(\rho) \leq x_1^2(\rho) \leq x_1^3(\rho) \leq x_2^2(\rho) \leq x_2^3(\rho) ... 
\end{equation}
with
\begin{equation} \label{containment eq}
\tilde{\rho}(b_i)( x_1^{2j}(\rho) ) = x_2^{2j+2n-1}(\rho),  \mbox{  and  }\,
\tilde{\rho}(b_i)( x_1^{2j+1}(\rho) ) = x_2^{2j+2n}(\rho)
\end{equation}

Thus, we need only show that all the inequalities in \eqref{leq order eq} are strict.  This is straightforward: if $x_1^j(\rho) = x_2^l(\rho)$ for some pair $l$ and $j$, then $\fix(c_1(\rho)) \cap \fix(c_2(\rho)) \neq \emptyset$.  Thus, $\rott(\tilde{c}_1(T) (\tilde{c}_2(T)) = 0$, contradicting our assumption.  Since $x_1^l(\rho) \neq x_2^j(\rho)$ for all $l, j$, it follows from Equation \eqref{containment eq} that $x_1^j(\rho) \neq x_1^l(\rho)$ for all $l \neq j$ as well.

\boldhead{ii) Stability phenomena} 
We now deal with point ii), namely, showing that all representations sufficiently close to $\rho$ will also have good fixed sets.     
To do this, we study the dynamics of $\tilde{c}_1(\rho)$ and $\tilde{c}_2(\rho)$ and identify some stable behavior for fixed points.  
To make the next part of argument easier to read, we drop ``$\rho$" from the notation, writing $x_i^j$ for $x_i^j(\rho)$.  

We start with two dynamical lemmas. 
\begin{lemma} \label{monotone lemma 1} 
$\tilde{c}_2(\rho)$ is monotone increasing on each interval $[x_1^{2j}, x_1^{2j+1}]$.  
\end{lemma}

\begin{proof}  
We use the fact that $R_i(\rho) = \rott(\tilde{c}_1(\rho) \tilde{c}_2(\rho)) = \frac{1}{k}$.  

Suppose for contradiction that $\tilde{c}_2(\rho)$ is not monotone increasing on some $[x_1^{2j}, x_1^{2j+1}]$.  There are three cases to consider.  First, suppose that $\tilde{c}_2(\rho)$ has a fixed point $y$ in $[x_1^{2j}, x_1^{2j+1}]$ for some $j$.  Then let $\bar{y} \in S^1$ be the projection of $y$ to $S^1 = \R/\Z$ and consider the sets 
$$Y_2 = X_2(\rho) \cup \{y\} \subset \fix(c_2(\rho)), \text{ and}$$ 
$$Y_1 = \rho(b_i)^{-1}(Y_2) \subset \fix(c_1(\rho)).$$  Now $Y_1$ and $Y_2$ contain $k+1$ \emph{alternating pairs}, so by Example \ref{different estimates ex} $\rott(\tilde{c}_1(\rho) \tilde{c}_2(\rho)) \leq \frac{1}{k+1}$, a contradiction.  

If instead $\tilde{c}_2(\rho)$ fixes an endpoint of $[x_1^{2j}, x_1^{2j+1}]$, then $\tilde{c}_1(\rho)$ and $\tilde{c}_2(\rho)$ have a common fixed point, and so $\rott(\tilde{c}_1(\rho) \tilde{c}_2(\rho)) = 0$.   Finally, if $\tilde{c}_2(\rho)$ is monotone \emph{decreasing} on $[x_1^{2j}, x_1^{2j+1}]$, then $\tilde{c}_1(\rho)\tilde{c}_2(\rho)(x_1^{2j+1}) < \tilde{c}_1(\rho)(x_1^{2j+1}) = x_1^{2j+1}$ and so $\rott(\tilde{c}_1(\rho) \tilde{c}_2(\rho)) \leq 0$, again a contradiction.  

\end{proof}

Essentially the same argument can be used to prove the following lemma (we omit the proof).  
\begin{lemma}  \label{monotone lemma 2} 
$\tilde{c}_1(\rho)$ is monotone increasing on each interval $[x_2^{2j}, x_2^{2j+1}]$. 
\end{lemma} 

\noindent Now since $\tilde{c}_2(\rho) = \tilde{\rho}(b_i)^{-1} \tilde{\rho}(a_i)^{-1} \tilde{\rho}(b_i)$ is monotone increasing on $[x_1^{2j + 2n}, x_1^{2j + 2n +1}]$, its inverse $\tilde{\rho}(b_i)^{-1} \tilde{\rho}(a_i) \tilde{\rho}(b_i)$ is monotone \emph{decreasing} on $[x_1^{2j +2n }, x_1^{2j+2n+1}]$, and so 
 $\tilde{c}_1(\rho) = \tilde{\rho}(a_i)$ is monotone decreasing on 
 $$\tilde{\rho}(b_i)^{-1}[x_1^{2j +2n }, x_1^{2j + 2n+1}] \subset (x_1^{2j}, x_1^{2j+1}).$$
The dynamics of  $\tilde{c}_1(\rho)$ in this case are shown in Figure \ref{dynamics of c1 fig}, with arrows indicating where $\tilde{c}_1(\rho)$ is increasing and decreasing.  
 
 \begin{figure*}
   \labellist 
  \small\hair 2pt
   \pinlabel $x_1^{2j-1}$ at 30 -8 
   \pinlabel $x_2^{2j-2}$ at  85 -8 
    \pinlabel $x_2^{2j-1}$ at 145 -8
    \pinlabel $x_1^{2j}$ at 205 -8
    \pinlabel $x_1^{2j+1}$ at 305 -8 
   \pinlabel $x_2^{2j}$ at 370 -8
    \pinlabel $x_2^{2j+1}$ at 430 -8 
     \pinlabel $x_1^{2j+2}$ at 500 -8 
   \endlabellist
  \centerline{
    \mbox{\includegraphics[width=4.5in]{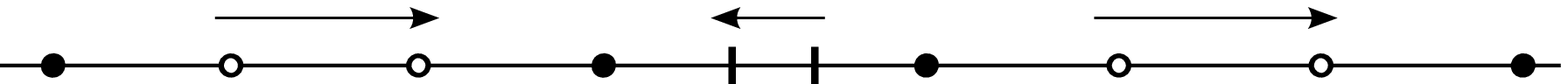}}}
 \caption{\small dynamics of $\tilde{c}_1(T)$}
  \label{dynamics of c1 fig}
  \end{figure*}

 It follows from this dynamical picture that 
  \begin{align*}
& \fix(c_1(\rho)) \cap \left(x_2^{2j-1},\,\, \tilde{\rho}(b_i)^{-1}(x_1^{2j+2n} \right) \neq \emptyset, \text{ and } \\
& \fix(c_1(\rho)) \cap \left(\tilde{\rho}(b_i)^{-1}(x_1^{2j+2n+1}), \,\, x_2^{2j} \right) \neq \emptyset,
\end{align*}
and our next step is to show that these intervals contain ``attracting" and ``repelling" fixed sets, which will satisfy a stability property.  

\begin{definition}
Let $f \in \Homeo_+(\R)$.  A connected component $Z$ of $\fix(f)$ is called \emph{attracting} for $f$ if for any sufficiently small neighborhood $Z'$ of $Z$ we have $\bigcap \limits_{n>0} f^n(Z') = Z$.  Similarly, a connected component $Z \subset \fix(f)$ is called \emph{repelling} for $f$ if for any sufficiently small neighborhood $Z'$ of $Z$ we have $\bigcap \limits_{n>0} f^{-n}(Z') = Z$. 
\end{definition}

\begin{lemma}[\textbf{how to find attracting and repelling sets}] \label{attracting lemma}
Let $f \in \Homeo_+(\R)$, and let $x<y \in \R$.  If $f(x) > x$ and $f(y) < y$, then $\fix(f) \cap (x,y)$ contains an attracting set.  Similarly, if $f(x) < x$ and $f(y) > y$, then $\fix(f) \cap (x,y)$ contains a repelling set.
\end{lemma}
The proof of Lemma \ref{attracting lemma} is elementary and we leave it as an exercise.   It is also easy to see that attracting and repelling fixed sets contain \emph{stable fixed points} in the following sense.

\begin{lemma}[\textbf{fixed point stability}] \label{stable fix points lemma}
Let $f \in \Homeo_+(\R)$, and let $Z$ be an attracting or a repelling fixed set for $f$.  Then for any sufficiently small neighborhood $Z'$ of $Z$, there exists $\epsilon >0$ (depending on $f$ and $Z'$) such that if $g \in \Homeo_+(\R)$ with $|f-g|<\epsilon$ (using the uniform norm), then $\fix(g) \cap Z' \neq \emptyset$.  
\end{lemma}
Again, we leave the proof as an exercise.  
\bigskip

Now to return to the situation at hand. 
The dynamics of $\tilde{c}_1(\rho)$ as shown in Figure \ref{dynamics of c1 fig}, together with Lemma \ref{attracting lemma}, implies that for each $j \in \Z$ there exists an attracting set 

\begin{equation} \label{first Z eq}
Z^{2j} \subset \left(x_2^{2j-1}, \,\, \tilde{\rho}(b_i)^{-1}T^{-1}(x_1^{2j+2n}\right) \cap \fix(\tilde{c}_1(\rho))
\end{equation} 
and a repelling  set 
\begin{equation} \label{second Z eq}
Z^{2j+1} \subset \left( \tilde{\rho}(b_i)^{-1}(x_1^{2j+2n+1}),\,\, x_2^{2j} \right) \cap \fix(\tilde{c}_1(\rho)).
\end{equation}

Since $\tilde{c}_1(\rho)$ and $\tilde{c}_2(\rho)$ commute with integer translations, and since $x_i^{m+2k} = x_i^{m} + 1$ holds for all $m$, we may in fact choose attracting and repelling sets such that each set $Z^{m+2k}$ is the image of $Z^{m}$ under translation by 1.


In the next lemma, we use the notation $P < Q$ for sets $P$ and $Q$ to mean that for all points $p \in P$ and $q \in Q$ we have $p<q$.  Similarly, if $x$ is a point and $P$ a set, ``$x<P$" means that $\{x\} < P$.  

\begin{claim} \label{Z claim}
If $\rott(\tilde{c}_1\tilde{c}_2) \geq 1/k$, then for all $j$ we have 
\begin{align} 
& Z^{2j + 2n +1} < \tilde{\rho}(b_i)(Z^{2j + 1}) <  Z^{2j + 2n + 2} \text{ and } \label{zclaim1} \\
& Z^{2j + 2n -1} < \tilde{\rho}(b_i)(Z^{2j}) <  Z^{2j + 2n} \label{zclaim2}
\end{align}
\end{claim}

Note the similarity between the equations in Claim \ref{Z claim} and and the ordering of points in a ``good fixed set", which by definition is
\begin{align*}
& x_1^{2j + 2n +1} < x_2^{2j+2n+1} = \tilde{\rho}(b_i)(x_1^{2j + 1}) <  x_1^{2j + 2n + 2} \text{ and } \\
& x_1^{2j + 2n -1} < x_2^{2j+2n-1} = \tilde{\rho}(b_i)(x_1^{2j}) < x_1^{2j + 2n}.
\end{align*}

\begin{proof}[Proof of Claim \ref{Z claim}]
Assume that $\rott(\tilde{c}_1\tilde{c}_2) \geq 1/k$. 
One side of each of the inequalities in Claim \ref{Z claim} will follow easily from Equations \eqref{first Z eq} and \eqref{second Z eq}.  These two equations are equivalent to 
 \begin{align*}
& x_2^{2j-1} < Z^{2j} < \tilde{\rho}(b_i)^{-1}(x_1^{2j+2n}) \,  \mbox{ and } \\
 & \tilde{\rho}(b_i)^{-1}(x_1^{2j+2n+1}) < Z^{2j+1} < x_2^{2j}, 
\end{align*}
which imply that 
\begin{align*}
& \tilde{\rho}(b_i)(Z^{2j}) < x_1^{2j+2n} < x_2^{2j+2n+1} < Z^{2j + 2n + 2} \,  \mbox{ and } \\
& Z^{2j + 2n +1}< x_2^{2j + 2n -2} < x_1^{2j+2n+1} < \tilde{\rho}(b_i)(Z^{2j+1})
\end{align*}
giving the right side of \eqref{zclaim1} and left side of \eqref{zclaim2}.  

To show the left side of \eqref{zclaim1}, suppose for contradiction that for some $j$ there are points $z^{2j+1} \in Z^{2j+1}$ and $z^{2j+2n+1} \in Z^{2j+2n+1}$ such that $\tilde{\rho}(b_i)(z^{2j+1}) \leq z^{2j + 2n +1}$.   If equality holds, then $\tilde{c}_1(\rho)$ and $\tilde{c}_2(\rho)$ have a common fixed point and $\rott(\tilde{c}_1(\rho)\tilde{c}_2(\rho))=0$, contradicting our assumption.  
If instead we have strict inequality, then let $\bar{z}$ denote the projection of $z^{2j+1}$ to $S^1$ and let $Y_1 = (X_1(\rho) \cup \{\bar{z}\}) \subset  \fix(c_1(\rho))$, and $Y_2 =  \rho(b_i)(Y_1) \subset \fix(c_2(\rho))$.  Since 
$$x_1^{2j+2n+1} < \tilde{\rho}(b_i)(z^{2j+1}) < z^{2j + 2n +1} < x_2^{2j+2n + 1}$$ 
the sets $Y_1$ and $Y_2$ contain $k+1$ \emph{alternating pairs} (see Figure \ref{zj figure}, indexed with $j=0$).
It then follows from Example \ref{different estimates ex} that $\rott(\tilde{c}_1(\rho) \tilde{c}_2(\rho)) \leq \frac{1}{k+1}$, contradicting our assumption.  

 \begin{figure*}
   \labellist 
  \small\hair 2pt
   \pinlabel $x_2^{2n-1}$ at 140 190 
   \pinlabel $x_1^{2n}$ at  200 190 
    \pinlabel $\tilde{\rho}(b_i)(z^{1})$ at 220 250
    \pinlabel $z^{2n+1}$ at 310 250
     \pinlabel $x_1^{2n+1}$ at  320 190 
   \pinlabel $x_2^{2n}$ at 380 190
  \pinlabel $x_2^{2n+1}$ at 430 190 
    
   \endlabellist
  \centerline{
    \mbox{\includegraphics[width=4in]{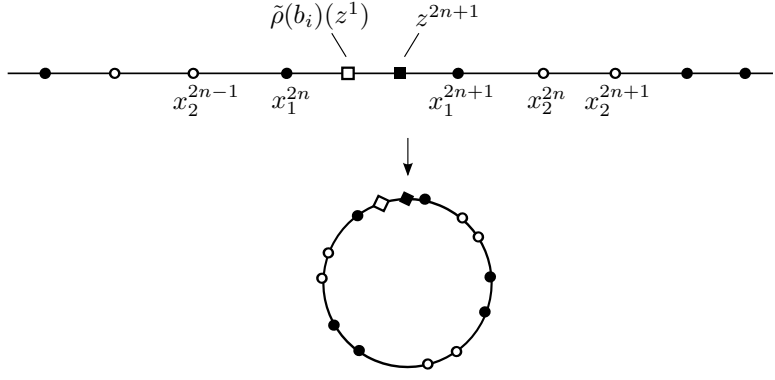}}}
 \caption{\small If $\tilde{\rho}(b_i)(z^{1}) \leq z^{2n +1}$, then there is an extra alternating pair on $S^1$}
  \label{zj figure}
  \end{figure*}

The right side of \eqref{zclaim2} is proved in exactly the same way; we omit the details.    

\end{proof}

To finish the proof of Proposition \ref{int Ni prop}, assume for contradiction that $\rott(\tilde{c}_1(\rho)\tilde{c}_2(\rho)) \geq 1/k$.  Then Claim \ref{Z claim} implies that 
\begin{align*} 
& Z^{2j + 2n +1} < \tilde{\rho}(b_i)(Z^{2j + 1}) <  Z^{2j + 2n + 2} \text{ and } \\
& Z^{2j + 2n -1} < \tilde{\rho}(b_i)(Z^{2j}) <  Z^{2j + 2n}.
\end{align*}

Let $z^m \in Z^m$ be a \emph{stable fixed point} in the sense of Lemma \ref{stable fix points lemma}.  We may also choose $z^m$ such that $z^{m+2k} = z^m +1$.  Since equations \eqref{zclaim1} and \eqref{zclaim2} hold, we have in particular
\begin{align*} 
& z^{2j + 2n +1} < \tilde{\rho}(b_i)(z^{2j + 1}) <  z^{2j + 2n + 2} \text{ and } \\
& z^{2j + 2n -1} < \tilde{\rho}(b_i)(z^{2j}) <  z^{2j + 2n}.
\end{align*} 
Lemma \ref{stable fix points lemma} now implies that, by taking $\rho'$ sufficiently close to $\rho$, we can find fixed points $z^{m}(\rho') \in \fix(\tilde{c}_1(\rho'))$ as close as we like to $z^m$.  In particular, there is an open neighborhood $U_i$ in $\Hom(\Gamma_g, \Homeo_+(S^1))$, such that for all $\rho' \in U_i$, there are points $z^{m}(\rho') \in \fix(\tilde{c}_1(\rho'))$ satisfying
\begin{align} 
& z^{2j + 2n +1}(\rho') < \tilde{\rho'}(b_i)(z^{2j + 1}(\rho')) <  z^{2j + 2n + 2}(\rho') \text{ and } \label{final z eq1} \\
& z^{2j + 2n -1}(\rho') < \tilde{\rho'}(b_i)(z^{2j}(\rho') <  z^{2j + 2n}(\rho') .\label{final z eq2}
\end{align}
(To get the existence of an open neighborhood $U_i$ we are using the fact that $z^{m+2n} = z^m +1$ and that all $\tilde{c}_1(\rho')$ commute with integer translations, so all equations above really only specify a finite set of inequalities).  

Let $X_1(\rho') \subset \fix(c_1(\rho'))$ be the projection of $\bigcup \limits_{m\in \Z} z^m(\rho')$ to $S^1 = \R/\Z$.  
The fact that $x_2^{2j-1} < Z^{2j} < \tilde{\rho}(b_i)^{-1}(x_1^{2j+2n}$ and $\tilde{\rho}(b_i)^{-1}(x_1^{2j+2n+1}) < Z^{2j+1} <  x_2^{2j}$ implies that $X_1(\rho')$ is of cardinality $2k$.  Together with equations \eqref{final z eq1} and \eqref{final z eq2} above, this also implies that properties ii) and iii) of a \emph{good fixed set} hold for the set 
$X_1(\rho')$.  Thus $X_1(\rho')$ is a good fixed set for $c_1(\rho')$, which is what we wanted to show.  

This concludes the proof of Proposition \ref{int Ni prop}, showing that $\rho$ is an interior point of $N_i$. 

\end{proof}


\subsection{Rotation numbers of products of commutators} \label{comm product subsec}

We continue to work with our maximal $\PSLk$ representation $\rho_0$ from the previous section.  It will be convenient to introduce a little more notation.  

\begin{notation}
Given any $\rho \in \Hom(\Gamma_g, \Homeo_+(S^1)$, let $c_i(\rho)$ denote the lifted commutator $[\tilde{\rho}(a_i), \tilde{\rho}(b_i)] \in \Homeo_\Z(\R)$, and let $R(\rho)$ denote the product $\rott( c_1(\rho) c_2(\rho) ...c_{g-1}(\rho))$. 
\end{notation}

Similar to what we did in Section \ref{technical subsec}, our strategy here is to define a \emph{good} representation to be one that ``looks like" a maximal $\PSLk$ representation (at least on the level of the combinatorial data of some periodic points for certain elements), determine the possible values of $R(\rho)$ for ``good" representations $\rho$, and then find interior points of the set of good representations.  

\begin{definition} 
Say that a representation $\rho \in \Hom(\Gamma_g, \Homeo_+(S^1))$ is \emph{good} if the following two conditions hold. 
\begin{enumerate}[i)]
\item $\rott(\tilde{c}_i(\rho)) = 1/k$ for all $i \in \{1, 2, ... g-1\}$, and
\item For each $i$, there is a periodic orbit $X_i(\rho)$ of $c_i(\rho)$ such that the lifts $\tilde{X}_i(\rho)$ can be ordered 
$$\ldots  \, x_1^j(\rho) < x_2^j(\rho) < \ldots  < x_n^j(\rho) < x_1^{j+1}(\rho)   \ldots  $$
\end{enumerate}
\end{definition}

Let $N_0$ be the closure of the set of good representations in $\Hom(\Gamma_g, \Homeo_+(S^1))$.  Note that $\rho_0$ is a good representation, and that for any good representation $\rho$ Example 4.3 implies that 
\begin{equation} \label{eq N0}
\rott(\tilde{c}_1(\rho)\tilde{c}_2(\rho)...\tilde{c}_{g-1}(\rho)) = \rott([\tilde{\rho}(a_1), \tilde{\rho}(b_1)]...[\tilde{\rho}(a_{g-1}), \tilde{\rho}(b_{g-1})]) \leq \frac{2g-3}{2}
\end{equation} 
Moreover, \eqref{eq N0} holds not only for good representations but for all $\rho \in N_0$.  

Let $N$ be the closure of $\bigcap \limits_{i=0}^g N_i$. 

\begin{proposition}[Interior points of $N$]  \label{int N prop}
Let $\rho \in N$.  If $\rott(\tilde{c}_1(\rho)\tilde{c}_2(\rho)...\tilde{c}_{g-1}(\rho)) = \frac{2g-3}{2}$ 
holds, then $\rho$ is an interior point of $S$.  
\end{proposition}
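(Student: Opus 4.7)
The strategy is to show that $\rho$ is interior in each $N_i$ separately, for then $N = \overline{\bigcap_{i=0}^g N_i} = \bigcap_{i=0}^g N_i$ (since each $N_i$ is closed), and intersecting the individual interior neighborhoods produces an open neighborhood of $\rho$ in $N$. The first task is to upgrade the hypothesis $\rott(\tilde c_1(\rho) \cdots \tilde c_{g-1}(\rho)) = \tfrac{2g-3}{k}$ to the sharp equality $R_i(\rho) = 1/k$ for every $i \in \{1,\ldots,g\}$. For $i \le g-1$ this follows from an inductive application of Theorem \ref{CW thm} exactly analogous to Lemma \ref{SL2 lemma}, but with the bound $1/k$ replacing $1/2$: since $\rho \in N_i$ gives $R_i(\rho) \le 1/k$ (Example \ref{different estimates ex}), any strict inequality anywhere would pull the product of lifted commutators strictly below $\tfrac{2g-3}{k}$. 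For $i = g$, the surface relator gives $\tilde c_1(\rho)\cdots \tilde c_g(\rho) = T^{e(\rho)}$, so Lemma \ref{additivity lemma} yields $\tfrac{2g-3}{k} + R_g(\rho) = e(\rho) \in \Z$; combined with $R_g(\rho) \le 1/k$ (from $\rho \in N_g$), the standing positivity of Euler number, and the fact that $\rho$ lies in the component of a maximal $\PSLk$ representation (of Euler number $\tfrac{2g-2}{k}$), integrality forces $R_g(\rho) = 1/k$.

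With $R_i(\rho) = 1/k$ for all $i \in \{1,\ldots,g\}$, Proposition \ref{int Ni prop} supplies open neighborhoods $U_1,\ldots,U_g$ of $\rho$ such that every $\rho' \in U_i$ actually has a good fixed set $X_1(\rho')$ for $\rho'(a_i)$. Let $U = \bigcap_{i=1}^g U_i$; this is an open neighborhood of $\rho$ contained in $\bigcap_{i=1}^g N_i$, and it remains only to show $U \subset N_0$. Note first that $\rho$ itself is good: the equalities $R_i(\rho) = 1/k$ force each $c_i(\rho)$ to have period-$k$ orbits, and since $k \mid 2g-2$ forces $\gcd(k, 2g-3) = 1$, Corollary \ref{orbit structure cor} applies and forces any choice of such orbits to sit in the lexicographic order required of a good representation.

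For a general $\rho' \in U$ one must produce good representations arbitrarily close to $\rho'$. The good fixed sets $X_1(\rho')$ constrain the dynamics of each commutator $c_i(\rho')$ so that $\rott(c_i(\rho')) \le 1/k$ (Example \ref{different estimates ex}), and the plan is then to apply a small deformation modeled on Proposition \ref{algorithm prop}(2) -- squeezing the dynamics of $\rho'(a_i)$ and $\rho'(b_i)$ into small intervals adjacent to the points of $X_1(\rho')$ -- so that each deformed commutator attains $\rott = 1/k$ and realizes the periodic-orbit structure of Example \ref{different estimates ex}. This is the main obstacle: the perturbation must be carried out simultaneously and consistently across all generators of $\Gamma_g$, both respecting the surface relator (so that the result is a genuine representation) and preserving the good fixed sets for the other $\rho'(a_j)$, $j\ne i$, so that the resulting nearby representation lies in $\bigcap_i N_i$ and realizes the full good combinatorial structure of $N_0$. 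Once such a deformation is available at arbitrarily small scale, $\rho' \in \overline{\{\text{good reps}\}} = N_0$, which completes the proof.
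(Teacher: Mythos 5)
Your first two steps track the paper's argument: upgrading the hypothesis to $R_i(\rho) = 1/k$ for each $i$ (for $i \le g-1$ this is in fact immediate from $\rho \in N_0$, since $\rott(\tilde c_i) = 1/k$ is part of the definition of ``good'' and $\rott$ is continuous; for $i = g$ one needs the Euler number as you say), and then intersecting the neighborhoods $U_i$ from Proposition~\ref{int Ni prop}. But your third step --- showing $U \subset N_0$ --- has a genuine gap, and you flag it yourself: you propose to deform an arbitrary $\rho' \in U$ to a nearby good representation by squeezing dynamics as in Proposition~\ref{algorithm prop}(2), but you never construct this deformation, and it is far from clear that one can. The difficulty is exactly the one you name: the deformation must be carried out on all generators simultaneously while preserving the surface relator, so that the result is again a representation. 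Proposition~\ref{algorithm prop}(2) only deforms a single word in free homeomorphisms; imposing the group relation $[a_1,b_1]\cdots[a_g,b_g] = 1$ throughout the deformation is a serious constraint and you give no mechanism for satisfying it.

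The paper avoids this entirely by arguing in the opposite direction. Rather than trying to push $\rho' \in U$ toward $N_0$, one supposes for contradiction that there is a sequence $\eta_n \notin N_0$ (all in $U$) converging to $\rho$. Each $c_i(\eta_n)$ has a period-$k$ orbit $X_i(\eta_n)$; after a subsequence these converge to periodic orbits $Y_i$ of $c_i(\rho)$. Now Corollary~\ref{orbit structure cor} --- which applies because $\rott(\tilde c_1(\rho)\cdots \tilde c_{g-1}(\rho))$ attains the maximal value $\tfrac{2g-3}{k}$ and $\gcd(k,2g-3)=1$ --- says that \emph{every} choice of periodic orbits for the $c_i(\rho)$ has the standard lexicographic combinatorial structure. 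Hence the $Y_i$ do, hence the $X_i(\eta_n)$ do for $n$ large, hence $\eta_n$ is good for $n$ large --- a contradiction. This is a rigidity argument, not a perturbation argument, and it sidesteps the relator-compatibility issue that blocks your approach. You should replace your step~3 with this contradiction-plus-Corollary~\ref{orbit structure cor} argument.
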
 

\begin{proof}
Suppose that $\rho \in N$ satisfies $\rott(\tilde{c}_1(\rho)\tilde{c}_2(\rho)...\tilde{c}_{g-1}(\rho)) = \frac{2g-3}{2}$.  
Let $\rho_n$ be a sequence of representations in $\bigcap \limits_{i=0}^g N_i$ approaching $\rho$.  Then $\rho_n \in N_0$, so we may choose periodic orbits $X_i(\rho_n)$ for $c_i(\rho_n)$ as in the definition of ``good".  After passing to a subsequence, we may assume that each $X_i(\rho_n)$ converges to a $k$-tuple of points $X_i(\rho)$ with lifts ordered 
$$\ldots  \, x_1^j(\rho) \leq x_2^j(\rho) \leq \ldots  \leq x_n^j(\rho) \leq x_1^{j+1}(\rho)   \ldots  $$
If equality holds in any of these inequalities, then Example 4.4 implies that $\rott(\tilde{c}_1(\rho)\tilde{c}_2(\rho)...\tilde{c}_{g-1}(\rho)) < \frac{2g-3}{2}$, contradicting our assumption, so all points $x_i^j(\rho)$ must be distinct.  

Since $\rott(\tilde{c}_i(\rho)) = 1/k$, by Proposition \ref{int Ni prop} there is an open neighborhood $U_i$ of $\rho$ in $\Hom(\Gamma_g, \Homeo_+(S^1))$ contained in $N_i$.  The intersection $U$ of the sets $U_i$ is an open neighborhood of $\rho$ contained in $\bigcap \limits_{i=1}^g S_i$.  We claim that there is an open neighborhood of $\rho$ contained in $N_0$ as well.  

To see this, suppose for contradiction that we can find a sequence of representations $\eta_n$ in $\Hom(\Gamma_g, \Homeo_+(S^1)) \setminus S_0$ approaching $\rho$.  Without loss of generality, we may assume all $\eta_n$ lie in $U$.  Then each $c_i(\eta_n)$ has a periodic orbit $X_i(\eta_n)$ and after passing to a subsequence we may assume that each $X_i(\eta_n)$ converges to a $k$-tuple $Y_i$, which will be a periodic orbit for $c_i(\rho)$.  Now Corollary 4.8 implies that the periodic orbits $Y_i$ have the same combinatorial structure as the $X_i(\rho)$ in the definition of ``good", and hence so do the sets $X_i(\eta_n)$ for $n$ sufficiently large.  Thus, for large $n$, the representation $\eta_n$ is good, contradicting our assumption.  

\end{proof}

\subsection{Rotation rigidity for generators} \label{rot rig subsec}

We can now show that the rotation number of a single element of our standard generating set is constant on the connected component of $\rho_0$.  

\begin{proposition}[\textbf{Rotation rigidity for a single generator}] \label{gen rig prop}
Let $N$ be the set defined in Section \ref{comm product subsec}, and let $N'$ be the connected component of $N$ containing $\rho_0$.    Then $N'$ is a connected component of $\Hom(\Gamma_g, \Homeo_+(S^1))$ and $\rot(\rho(a_g)) = 0$ for all $\rho \in N'$. 
\end{proposition}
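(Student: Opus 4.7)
The statement has two parts: (i) $\rot(\rho(a_g)) = 0$ for every $\rho \in N'$, and (ii) $N'$ is a connected component of the ambient space $\Hom(\Gamma_g, \Homeo_+(S^1))$. My plan is to dispatch (i) by a limit argument from membership in $N_g$, and to prove (ii) by showing $N'$ is both open and closed in $\Hom(\Gamma_g, \Homeo_+(S^1))$. Openness will be the only substantive step, and it reduces to Proposition \ref{int N prop} via an exact cancellation between two upper bounds on lifted rotation numbers.

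For (i): by definition $N \subseteq N_g$, so any $\rho \in N'$ is a limit of representations $\rho_n$ for which $\rho_n(a_g)$ possesses a good fixed set, in particular a distinguished $2k$-tuple of fixed points in $S^1$. By compactness of $(S^1)^{2k}$, after passing to a subsequence these $2k$-tuples converge, and the limit lies in $\fix(\rho(a_g))$ by continuity of evaluation on $\Hom$. Hence $\rho(a_g)$ has a fixed point, which is equivalent to $\rot(\rho(a_g)) = 0$.

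For (ii), closedness is immediate: $N$ is the intersection of closures and $N'$ is a connected component of the closed set $N$. To prove openness I show that each $\rho \in N'$ is an interior point of $N$ in $\Hom(\Gamma_g, \Homeo_+(S^1))$; a small connected open neighborhood then sits in $N'$ by connectedness. The Euler number is integer-valued and continuous, hence locally constant on $\Hom$, hence constant on the connected set $N'$, so $\euler(\rho) = \euler(\rho_0) = (2g-2)/k$. Lemma \ref{additivity lemma} applied to $\tilde{c}_1(\rho)\cdots\tilde{c}_g(\rho) = T^{\euler(\rho)}$ then yields
\[
\rott\bigl(\tilde{c}_1(\rho)\cdots\tilde{c}_{g-1}(\rho)\bigr) + \rott\bigl(\tilde{c}_g(\rho)\bigr) = \frac{2g-2}{k}.
\]
Now $\rho \in N_0$ combined with Example \ref{more general example} and continuity of $\rott$ gives $\rott(\tilde{c}_1(\rho)\cdots\tilde{c}_{g-1}(\rho)) \leq (2g-3)/k$, while $\rho \in N_g$ combined with Example \ref{different estimates ex} and Remark \ref{doubled points rk} gives $\rott(\tilde{c}_g(\rho)) \leq 1/k$. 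Since these two upper bounds sum to exactly $(2g-2)/k$, equality must hold in both. Proposition \ref{int N prop} then places $\rho$ in the interior of $N$, and openness of $N'$ follows.

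The main obstacle I anticipate is organizational rather than analytic: before running the argument above I must verify $\rho_0 \in N$, so that $N'$ is well-defined and nonempty. This will use Proposition \ref{maximal k reps prop} to see that $\rho_0$ is a good representation (giving $\rho_0 \in N_0$), together with the ``doubled alternating pairs'' discussion preceding Proposition \ref{int Ni prop} to see that each $\rho_0(a_i)$ admits a good fixed set (giving $\rho_0 \in N_i$ for $i = 1,\ldots,g$). A minor technical point is that the passage from ``$\rho$ has an open neighborhood in $N$'' to ``$\rho$ has an open neighborhood in $N'$'' requires small enough open neighborhoods in $\Hom(\Gamma_g,\Homeo_+(S^1))$ to be connected, which is standard for the uniform topology on representation spaces.
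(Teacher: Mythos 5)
Your proof is correct. In part (ii) you follow the paper's route: $N'$ is closed because $N$ is; $\euler$ is constant on the connected set $N'$; Lemma \ref{additivity lemma} converts the relator into a sum, and since the bounds coming from $N_0$ and $N_g$ add up to exactly $(2g-2)/k$, both must be attained; Proposition \ref{int N prop} (the paper also explicitly invokes \ref{int Ni prop} with $i=g$, which you have implicitly once you've established $R_g(\rho)=1/k$) then gives an interior point of $N$, and a connected neighborhood stays in $N'$. You are in fact more careful than the paper in verifying $\rho_0 \in N$ and in flagging the local-connectedness step needed to pass from ``open neighborhood in $N$'' to ``open neighborhood in $N'$,'' which the paper's phrase ``hence also in $N'$'' glosses over. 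Where you genuinely diverge is part (i): you observe that $N' \subset N_g$ and that a limit of representations with $\fix(\rho_n(a_g)) \neq \emptyset$ still has $\fix(\rho(a_g)) \neq \emptyset$ by compactness, so $\rot(\rho(a_g))=0$ directly. The paper instead argues by contradiction from the fact, established during the openness step, that $\rott[\rho(a_g),\rho(b_g)]=1/k$ holds on all of $N'$: if $\rot(\rho(a_g))$ were nonzero somewhere, continuity of $\rot$ on the connected set $N'$ would produce some $\rho'$ with $\rot(\rho'(a_g))$ irrational or of the form $p/q$ with $q>k$, contradicting Lemma \ref{comm lemma}. Your shortcut is cleaner for this particular proposition, but the paper's contradiction argument is the one reused verbatim at the end of Section \ref{Euc alg subsec}, where $\rot(\rho_0(a_g))$ is allowed to be nonzero and the fixed-point observation no longer directly gives the conclusion.
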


\begin{proof}  
By definition, $N$ is an intersection of closed sets, so $N'$ is closed.  To show that $N'$ is also open, consider any $\rho \in N'$.  Since $\rho$ lies in the same component of $\Hom(\Gamma_g, \Homeo_+(S^1))$ as $\rho_0$, we have $\euler(\rho) = \euler(\rho_0) = \frac{2g=2}{k}$.  It follows from Lemma 3.1 and the definition of Euler number that
$$ \rott \left( [\rho(a_1), \rho(b_1)] \ldots [\rho(a_{g-1}), \rho(b_{g-1})]\right) + \rott[\rho(a_g), \rho(b_g)] = \frac{2g-2}{k}$$
From this and the inequalities 
$R_i(\rho) \leq 1/k$ for $\rho \in N_i$, and $R(\rho) \leq \frac{2g-3}{k}$ for $\rho \in N_0$, 
we can conclude that $\rott[\rho(a_g), \rho(b_g)] = 1/k$ and $\rott \left( [\rho(a_1), \rho(b_1)] \ldots [\rho(a_{g-1}), \rho(b_{g-1})]\right) = \frac{2g-3}{k}$.   Thus, by Propositions \ref{int Ni prop} and \ref{int N prop}, there is an open neighborhood of $\rho$ contained in $N$ and hence also in $N'$.   This proves that $N'$ is a connected component of $\Hom(\Gamma_g, \Homeo_+(S^1))$.   Moreover, we have also just shown that $\rott[\rho(a_g), \rho(b_g)] = 1/k$ holds for all $\rho \in N'$.  
Assume now for contradiction that $\rot(\rho(a_g)) \neq \rot(\rho_0(a_g))$ for some $\rho \in N'$.  By continuity of $\rot$, there exists a representation $\rho' \in N'$ such that $\rot(\rho'(a_1))$ is either irrational or is rational of the form $p/q$ with $q>k$.  In either case, Lemma 2.4 implies that $\rott[\rho'(a_g), \rho'(b_g)] < 1/k$, a contradiction.   

\end{proof}

\subsection{Reduction to the case $\rot(\rho_0(a_i)) = 0$: a Euclidean algorithm trick} \label{Euc alg subsec}

Assuming now that $\rot(\rho_0(a_i)) = \frac{m_i}{k}$ for some $m \neq 0$, we show how to define a continuous map $\Phi: \Hom(\Gamma_g, \Homeo_+(S^1)) \to \Hom(\Gamma_g, \Homeo_+(S^1))$ such that $\Phi(\rho_0(a_i))$ is a maximal $\PSLk$ representation with $\rot(\Phi(\rho_0(a_i)))= 0$. Moreover, $\Phi$ will have additional properties that will let us translate the results of Sections \ref{technical subsec} and \ref{rot rig subsec} to this new case.  The key to reducing the rotation number of $\rho_0(a_i)$ is an \emph{Euclidean algorithm} for rotation numbers in commutators, introduced in Proposition \ref{Euclidean alg prop}

We start with a characterization of pairs $(f,g) \in \PSLk \times \PSLk$ such that $f = \rho(a_i)$ and $g= \rho(b_i)$ for a pair $(a_i, b_i)$ in a standard generating set and a maximal $\PSLk$ representation $\rho$.  

\begin{definition}[crossed pair]
Call a pair $(f, g) \in \PSLk \times \PSLk$  \emph{crossed} if the projections $\bar{f}$ and $\bar{g}$ of $f$ and $g$ to $\PSL(2,\R)$ are hyperbolic elements with intersecting (i.e. crossed) axes, with intersection number $-1$.  
\end{definition} 

\begin{proposition}
Let $(f,g)$ be a crossed pair.  Then there exists a maximal $\PSLk$ representation $\rho_0: \Gamma_g \to \PSLk$ such that $f = \rho_0(a_i)$ and $g= \rho_0(b_i)$ for a pair $(a_i, b_i)$ of elements in a standard generating set. 
\end{proposition}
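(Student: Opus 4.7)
The plan is to first descend to $\PSL(2,\R)$ and construct a Fuchsian representation, then lift back to $\PSLk$. Project $(f, g)$ to $(\bar{f}, \bar{g}) \in \PSL(2,\R)^2$; by hypothesis these are hyperbolic elements whose axes cross with intersection number $-1$. A standard hyperbolic trigonometry computation shows that the commutator $\bar{c} := [\bar{f}, \bar{g}]$ is then hyperbolic, and the convex hull of the axes of $\bar{f}$ and $\bar{g}$ modulo $\langle \bar{f}, \bar{g} \rangle$ is a one-holed torus with geodesic boundary whose holonomy is $\bar{c}$.

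Next, I will extend $(\bar{f}, \bar{g})$ to a Fuchsian representation $\bar{\rho} : \Gamma_g \to \PSL(2,\R)$ sending the standard pair $(a_i, b_i)$ to $(\bar{f}, \bar{g})$; after renaming generators we may assume $i = 1$. Decompose $\Sigma_g = \Sigma_{1,1} \cup_\gamma \Sigma_{g-1, 1}$ where $\gamma$ corresponds to $[a_1, b_1]$. By Fenchel--Nielsen theory, there is a Fuchsian representation $\bar{\rho}_2 : \pi_1(\Sigma_{g-1,1}) \to \PSL(2,\R)$ whose boundary holonomy has translation length equal to that of $\bar{c}$; after conjugating in $\PSL(2,\R)$ we may arrange this boundary holonomy to equal $\bar{c}^{-1}$ exactly. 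The amalgamated representation $\bar{\rho}$ on $\Gamma_g \cong \pi_1(\Sigma_{1,1}) *_{\langle \gamma \rangle} \pi_1(\Sigma_{g-1,1})$ is then discrete and faithful by the Klein--Maskit combination theorem (the intersection number $-1$ condition guarantees the gluing has the correct orientation, i.e.\ yields a surface rather than a nontrivial $\Z/2$ quotient). By Theorem \ref{fuchsian thm}, $\euler(\bar{\rho}) = 2g-2$.

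Finally, lift $\bar{\rho}$ to $\rho : \Gamma_g \to \PSLk$ with $\rho(a_1) = f$, $\rho(b_1) = g$, choosing arbitrary lifts for the remaining generators. Since the kernel of $\PSLk \to \PSL(2,\R)$ is the central cyclic subgroup $\Z/k\Z$, each commutator $[\rho(a_j), \rho(b_j)]$ in $\PSLk$ is independent of the choice of lifts, and so $\prod_j [\rho(a_j), \rho(b_j)]$ is determined by $\bar{\rho}$ alone. Computing in the universal cover $\widetilde{\PSL}(2,\R) \subset \Homeo_\Z(\R)$, this product equals the translation $T^{2g-2}$ by the definition of Euler number, which descends to the identity in $\PSLk = \widetilde{\PSL}(2,\R)/\langle T^k \rangle$ because $k \mid 2g-2$. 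Hence $\rho$ is a well-defined representation, and by the multiplicativity relation $\euler(\pi \circ \rho) = k\, \euler(\rho)$ from Section \ref{max reps subsec} we obtain $\euler(\rho) = \frac{2g-2}{k}$, so $\rho$ is a maximal $\PSLk$ representation.

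The main obstacle is the geometric Step 2: building a Fuchsian extension of a prescribed crossed pair. The lifting in Step 3 is essentially formal, hinging only on the divisibility $k \mid 2g-2$ and the centrality of the covering kernel. I expect that the orientation bookkeeping in Step 2 (using the $-1$ intersection number rather than $+1$) will require the most care, since getting the wrong sign produces a non-orientable gluing and hence the wrong surface.
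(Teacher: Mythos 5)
Your proof follows the same two-step plan as the paper's: realize $(\bar f, \bar g)$ as the image of a standard pair of generators under a Fuchsian representation $\nu:\Gamma_g\to\PSL(2,\R)$, then lift $\nu$ to a representation into $\PSLk$ sending $a_1\mapsto f$, $b_1\mapsto g$. The paper dispatches both steps in a few lines, citing ``standard theory of hyperbolic structures'' for the Fuchsian extension and leaving the lifting implicit. Your Step 3 spells out the lifting argument correctly and in full: commutators in $\PSLk$ are independent of the choice of lift because the covering kernel is central, the product of lifted commutators equals $T^{2g-2}$ in $\widetilde{\PSL}(2,\R)$ by the definition of Euler number, and this descends to the identity in $\PSLk=\widetilde{\PSL}(2,\R)/\langle T^k\rangle$ exactly because $k\mid 2g-2$. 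This is a genuine improvement on the paper's telegraphic treatment of that point.

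However, Step 2 rests on a false intermediate claim: it is \emph{not} true that for every crossed pair the commutator $\bar c=[\bar f,\bar g]$ is hyperbolic, nor that $\langle\bar f,\bar g\rangle$ is a discrete one-holed-torus group. Linearizing at the identity with $\bar f=\exp(tX)$, $\bar g=\exp(sY)$ for traceless $X,Y$ with crossing axes, the Lie bracket $[X,Y]$ is an infinitesimal rotation, so $[\bar f,\bar g]\approx\exp(ts[X,Y])$ is \emph{elliptic} for $t,s$ small. For a concrete instance, take $\bar f=\mathrm{diag}(e^{1/2},e^{-1/2})$ and $\bar g$ its conjugate by a rotation of angle $\pi/4$: the Fricke trace identity gives $\mathrm{tr}\,[\bar f,\bar g]\approx 1.7\in(-2,2)$, so $\bar c$ is elliptic. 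Since $[a_1,b_1]$ represents a separating simple closed curve on $\Sigma_g$, under any faithful Fuchsian $\nu$ the image $\nu([a_1,b_1])=\bar c$ would have to be hyperbolic; hence no such $\nu$ exists for this crossed pair, and your gluing argument cannot even get started. In fairness, the paper's own proof shares this gap --- the proposition as written is too strong --- and the applications later in the section only ever invoke it for crossed pairs produced from an honest maximal $\PSLk$ representation by Proposition \ref{Euclidean alg prop}, which preserves the commutator and hence the hyperbolicity of $\bar c$. But a self-contained proof of the statement as written must either add the hypothesis that $\bar c$ is hyperbolic (equivalently, that translation lengths and crossing angle satisfy the appropriate inequality), or else note that the statement fails without it.
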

 
\begin{proof} Let $(f,g)$ be a crossed pair.  Since maximal $\PSLk$ representations are lifts of injective Fuchsian representations to a $k$-fold cover, it suffices to exhibit an injective, Fuchsian representation $\nu: \Gamma_g \to \PSL(2,\R)$ such that $\nu(a_1) = \bar{f}$ and $\nu(b_1) = \bar{g}$.   It follows from standard theory of hyperbolic structures on surfaces that such a representation exists whenever $\bar{f}$ and $\bar{g}$ are hyperbolic elements with crossed axes.  In particular, one can deduce this from the usual description of Fenchel-Nielsen coordinates on Teichmuller space.  The assumption that the axes have intersection number $-1$ ensures that the Euler number of the representation $\nu$ will have Euler number $2g-2$ rather than $-2g+2$.   

\end{proof}

\begin{remark} 
For a less sophisticated approach, we remark that our proof Proposition of \ref{int Ni prop} in the case $\rot(\rho_0(a_i)) = 0$ did not use the fact that $\rho_0(a_i)$ and $\rho_0(a_i)$ were the images of a standard pair of generators under a maximal $\PSLk$ representation -- we only used the combinatorial structure (i.e. the ordering) of the points of $\fix(c_1(\rho_0))$ and  $\fix(c_2(\rho_0))$. In particular, we needed that $\fix(c_1(\rho_0)) = \fix(\rho_0(a_i))$ was a \emph{good fixed set}.  Now it is easy to show that for any crossed pair $(f, g)$ with $\rot(f) = 0$, the sets $\fix(f)$ and $g \fix(f)$ have the same combinatorial structure as $\fix(c_1(\rho_0))$ and  $\fix(c_2(\rho_0))$, and in particular $X_1 = \fix(f)$ satisfies the properties of a good fixed set for $f$, with $g$ playing the role of $\rho(b_i)$ in the definition of ``good fixed set". 
\end{remark}

We now show how to use one crossed pair to build another.  
\begin{lemma} \label{crossed axes lemma}
Let $(f, g)$ be a crossed pair.  Then $(fg, g)$ and $(f, gf)$ are crossed pairs also. 
\end{lemma}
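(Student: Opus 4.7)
The plan is first to project everything to $\PSL(2,\R)$. Since $\pi \colon \PSLk \to \PSL(2,\R)$ is a homomorphism, $\overline{fg} = \bar f \bar g$ and $\overline{gf} = \bar g \bar f$, so it suffices to prove: if $(\bar f, \bar g)$ is a pair of hyperbolic elements of $\PSL(2,\R)$ with crossed axes and intersection number $-1$, then so are $(\bar f \bar g, \bar g)$ and $(\bar f, \bar g \bar f)$.

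For the first pair, I would normalize by conjugation in $\PSL(2,\R)$. Place $\bar g$ in standard form with $A_g$ the $y$-axis oriented upward, so that $\bar g$ acts as $z \mapsto e^s z$ for some $s > 0$. The residual conjugation freedom (translations along $A_g$ together with rotations fixing $i$) lets me further assume that the crossing point $A_f \cap A_g$ is $i$ and that $A_f$ is the geodesic from $-1$ to $1$. The intersection-number-$(-1)$ hypothesis then pins down the orientation of $\bar f$, so that $\bar f$ is determined by a single translation-length parameter $t > 0$. A direct matrix multiplication yields $\operatorname{tr}(\bar f \bar g) = 2\cosh(t/2)\cosh(s/2) > 2$, so $\bar f \bar g$ is hyperbolic.

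Next I would solve the fixed-point quadratic for $\bar f \bar g$ acting on $\partial \H^2 = \R \cup \{\infty\}$. Its constant term works out to $-e^{-s}$, so the two real roots have opposite signs. Since the two components of $\partial \H^2 \setminus \{0, \infty\}$ are precisely $(-\infty, 0)$ and $(0, \infty)$, this places one endpoint of $A_{\bar f \bar g}$ in each component of $\partial \H^2 \setminus A_g$, forcing the axes $A_{\bar f \bar g}$ and $A_g$ to cross. To obtain the intersection number, I identify the attracting fixed point of $\bar f \bar g$ as the eigenvector of the larger eigenvalue of a positive-trace matrix lift; a short computation shows that this is the negative root, so the cyclic order of $\{(\bar f \bar g)^\pm, g^\pm\}$ on $\partial \H^2$ matches the pattern $f^+, g^-, f^-, g^+$ that characterizes intersection $-1$ for the original crossed pair.

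The argument for $(\bar f, \bar g \bar f)$ is entirely symmetric: place $\bar f$ in standard form instead and run the analogous calculation on $\bar g \bar f$. The main obstacle throughout is bookkeeping signs and orientations to ensure the intersection number comes out to $-1$ rather than $+1$; once the convention pinning down the sign has been tracked carefully, the underlying algebra is routine $\SL(2,\R)$ computation.
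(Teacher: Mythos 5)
Your proof is correct in outline but takes a genuinely different and noticeably heavier route than the paper. The paper's argument (the proof of Lemma~\ref{crossed axes lemma}) is coordinate-free and dynamical: it lets $I$ be the arc of $\partial\H$ bounded by the attracting fixed points of $\bar f$ and $\bar g$, and $J$ the arc bounded by the repelling fixed points, observes that $\bar f\bar g(I)\subset I$ and $(\bar f\bar g)^{-1}(J)\subset J$, and concludes at once that $\bar f\bar g$ has an attracting fixed point in $I$ and a repelling one in $J$; since $I$ and $J$ are separated by the axis of $\bar g$ this already forces the axes to cross with the right sign, and the same single observation covers $\bar g\bar f$. What your normalization-plus-matrix computation buys you is explicit quantitative control (a trace formula and the exact fixed-point quadratic), at the cost of carrying all the sign bookkeeping you flag at the end. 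A few of those signs deserve one more pass: as written you take $t>0$ with $A_f$ the geodesic from $-1$ to $1$, but then the attracting eigenvector of the lift of $\bar f\bar g$ lands on the \emph{positive} root, not the negative one; the claimed ``negative root'' is what you get with the opposite orientation on $A_f$, so the intersection-number normalization is doing more work than the writeup suggests. Also, in the final cyclic-order sentence the pattern ``$f^+,g^-,f^-,g^+$'' should read $(\bar f\bar g)^+,g^-,(\bar f\bar g)^-,g^+$. Finally, ``entirely symmetric'' for $(f,gf)$ is not literal symmetry, since the intersection pairing is antisymmetric in its two slots; you would be re-normalizing with $\bar f$ diagonal and verifying a mirror-image inequality, which is fine but is a second computation rather than the same one. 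None of this is a gap in the idea, but the paper's nested-interval argument sidesteps every one of these sign issues.
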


\begin{proof} 
Let $f$ and $g$ be crossed, with projections $\bar{f}$ and $\bar{g} \in \PSL(2,\R)$.  
The dynamics of the action of $\bar{f}$ and $\bar{g}$ on $S^1$ is as in Figure \ref{fg fig} below.  Let $I \subset S^1$ be the closed interval bounded by the attracting fixed points of $\bar{f}$ and $\bar{g}$, and $J$ the closed interval bounded by the repelling fixed points, as in the figure.
Then $\bar{f}\bar{g}(I) \subset I$ and $(\bar{f}\bar{g})^{-1}(J) \subset J$.  It follows that $\bar{f}\bar{g}$ has an attracting fixed point in $I$ and a repelling fixed point in $J$, so its axis crosses the axis of $\bar{g}$.  The same argument shows that the axis of $\bar{g}\bar{f}$ crosses the axis of $\bar{g}$.  

\end{proof}

 \begin{figure*}
   \labellist 
  \small\hair 2pt
   \pinlabel $f$ at 45 50 
   \pinlabel $g$ at 110 100
   \pinlabel $I$ at 165 80
   \pinlabel $J$ at 60 -5
   \endlabellist
  \centerline{
    \mbox{\includegraphics[width=1.5in]{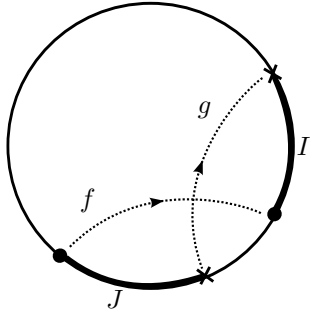}}}
 \caption{\small A pair of crossed homeomorphisms}
  \label{fg fig}
  \end{figure*}

Our final lemma is an elementary result on rotation numbers of positive words.  Recall that we use the terminology \emph{word in a and b} to mean a word $w(a,b)$ in the letters $a$ and $b$, (and not $a^{-1}$ or $b^{-1}$), in other words, $w(a,b)$ is an element of the positive semigroup generated by $a$ and $b$.    

\begin{lemma} \label{rot 0 lemma}
Let $(f, g)$ be a crossed pair, with $\rot(f) = \rot(g) = 0$.  Then any word $w = w(f,g)$ has $\rot(w) = 0$.  
\end{lemma}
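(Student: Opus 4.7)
The plan is to find a closed arc $\tilde{I} \subset S^1$ (the source circle of the $\PSLk$-action) that is invariant under both $f$ and $g$, so that any positive word $w(f,g)$ satisfies $w(\tilde{I}) \subset \tilde{I}$ and thus has a fixed point on the arc $\tilde{I}$ by the intermediate value theorem, giving $\rot(w) = 0$.

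To construct $\tilde{I}$, I first work downstairs in the target $S^1$ of the $\PSL(2,\R)$-action. Write $\bar{f}, \bar{g}$ for the projections of $f, g$; these are hyperbolic with crossed axes, so their four fixed points $\bar{f}^\pm, \bar{g}^\pm$ alternate cyclically, and after relabeling we may take the cyclic order to be $\bar{f}^+, \bar{g}^+, \bar{f}^-, \bar{g}^-$. Let $\bar{I}$ denote the closed arc from $\bar{f}^+$ to $\bar{g}^+$ avoiding $\bar{f}^-$ and $\bar{g}^-$. The same dynamical argument used in the proof of Lemma \ref{crossed axes lemma} shows $\bar{f}(\bar{I}) \subset \bar{I}$ and $\bar{g}(\bar{I}) \subset \bar{I}$: $\bar{f}$ fixes the endpoint $\bar{f}^+$ and attracts the other endpoint $\bar{g}^+$ along its own invariant arc toward $\bar{f}^+$, staying inside $\bar{I}$; the statement for $\bar{g}$ is symmetric. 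Let $\tilde{I} \subset S^1$ be any connected component of $\proj_k^{-1}(\bar{I})$; it is a closed arc whose endpoints $p, q$ lie above $\bar{f}^+$ and $\bar{g}^+$ respectively.

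The key step is to show $f(p) = p$ and $g(q) = q$; once this is in hand, $f(\tilde{I})$ is a connected subset of $\proj_k^{-1}(\bar{I})$ (a disjoint union of $k$ arcs) containing $p \in \tilde{I}$, hence $f(\tilde{I}) \subset \tilde{I}$, and symmetrically for $g$. To show $f(p) = p$, I would argue as follows. Since $\rot(f) = 0$, $f$ has a fixed point projecting to $\bar{f}^+$ or $\bar{f}^-$; because $f$ commutes with the deck transformation $T_{1/k}$ of $\proj_k$, which acts transitively on each $k$-element fiber above $\bar{f}^\pm$, $f$ fixes every point of the fiber containing its initial fixed point. To extend this to both fibers, observe that $f$ preserves the closed arc bounded by two adjacent points of this first fiber (a fundamental domain of $T_{1/k}$), which projects onto all of the target $S^1$ and therefore contains exactly one preimage of the other element of $\{\bar{f}^+, \bar{f}^-\}$; that preimage must be sent to itself and so is fixed, and $T_{1/k}$-equivariance propagates the fixing to the entire second fiber. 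In particular $f(p) = p$, and the analogous argument gives $g(q) = q$.

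The main obstacle is precisely this last argument --- showing that $f$ fixes both fibers $\proj_k^{-1}(\bar{f}^\pm)$ rather than just the one containing its guaranteed fixed point; this is where the hypothesis $\rot(f) = 0$ combines with the specific combinatorics of the $k$-fold cyclic cover in an essential way. Once it is settled, the conclusion assembles cleanly: $f(\tilde{I}), g(\tilde{I}) \subset \tilde{I}$ implies $w(\tilde{I}) \subset \tilde{I}$ for every positive word $w$, and the resulting fixed point on $\tilde{I}$ gives $\rot(w) = 0$.
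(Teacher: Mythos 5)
Your proposal is correct and follows the same route as the paper: invariant interval $\bar{I}$ downstairs for $\bar{f},\bar{g}$, lift to a connected arc $\tilde{I}$ in the $k$-fold cover, use $\rot(f)=\rot(g)=0$ to get $f(\tilde{I})\subset\tilde{I}$ and $g(\tilde{I})\subset\tilde{I}$, and conclude that every positive word has a fixed point. The one step the paper leaves tacit --- that rotation number zero forces $f$ and $g$ to fix the endpoints of the lifted arc --- is exactly the step you fill in (via commutation with the deck transformation $T_{1/k}$), and your argument for it is sound.
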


\begin{proof}
Let $(f,g)$ be a crossed pair with projections $\bar{f}$ and $\bar{g} \in \PSL(2,\R)$.  Let $I \subset S^1$ be an interval as in Figure \ref{fg fig}, so $\bar{f}(I) \subset I$ and $\bar{g}(I) \subset I$.  Lift $I$ to a single connected interval $I'$ in the $k$-fold cover of $S^1$.  Since $\rot(f) = \rot(g) = 0$, it follows that $f(I') \subset I'$ and $g(I') \subset I'$,  Thus, $w(f,g)(I') \subset I'$ and so $w$ has a fixed point in $I'$.  

\end{proof}

Now we can prove the main result of this section.  

\begin{proposition}[\textbf{Euclidean algorithm for rotation numbers}] \label{Euclidean alg prop}
Let $(a,b) \in \PSLk \times \PSLk$ be a crossed pair.  There exist words $u = u(a,b)$ and $v = v(a,b)$ such that 
\begin{enumerate}[i)]
\item $[u, v] = [a,b]$ 
\item $\rot(u) = 0$
\item $u$ and $v$ are a crossed pair.  
\end{enumerate}
\end{proposition}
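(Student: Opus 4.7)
The plan is to iterate the substitutions $(a,b)\mapsto(ab,b)$ and $(a,b)\mapsto(a,ba)$ supplied by Lemma \ref{crossed axes lemma}. A direct calculation gives $[ab,b]=aba^{-1}b^{-1}=[a,ba]=[a,b]$, so both substitutions preserve the commutator, while Lemma \ref{crossed axes lemma} guarantees they preserve crossed pairs. Thus any pair $(u,v)$ obtained by iterating these moves automatically satisfies (i) and (iii), and the problem reduces to arranging $\rot(u)=0$.

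The key intermediate step is additivity of rotation numbers on crossed pairs in $\PSLk$. Using the central extension $0\to\Z/k\Z\to\PSLk\to\PSL(2,\R)\to1$, let $R\in\PSLk$ generate the centre and write $a=R^p\hat a$ and $b=R^q\hat b$, where $\hat a,\hat b$ are the unique lifts of the hyperbolic elements $\bar a,\bar b$ with rotation number $0$ (such lifts exist because $\bar a,\bar b$ fix points on $S^1$). Centrality gives $ab=R^{p+q}\hat a\hat b$, and $(\hat a,\hat b)$ is a crossed pair with $\rot(\hat a)=\rot(\hat b)=0$, so Lemma \ref{rot 0 lemma} yields $\rot(\hat a\hat b)=0$. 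Consequently $\rot(ab)=(p+q)/k=\rot(a)+\rot(b)$, and by induction $\rot(w(a,b))=(N_ap+N_bq)/k$ for any positive word $w$ with exponent sums $N_a,N_b$.

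Under this additivity the two substitutions act on $(p,q):=(k\rot(a),k\rot(b))\in(\Z/k\Z)^2$ by $(p,q)\mapsto(p+q,q)$ and $(p,q)\mapsto(p,p+q)$. Property (ii) is therefore equivalent to the arithmetic claim that, starting from any $(p,q)\in(\Z/k\Z)^2$, iterating these two additive moves can produce a pair whose first coordinate is $0$.

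This last reduction is the main obstacle, since only positive (additive) moves are allowed and we cannot simply subtract. The workaround is wrap-around modulo $k$. I would prove the claim by induction on $k$. The trivial cases ($p=0$ is done; and $q=0,\, p\neq 0$ becomes both-nonzero after one application of $(p,q)\mapsto(p,p+q)$) are dispatched first. For both coordinates nonzero, iterating $(p,q)\mapsto(p,p+q)$ lets the second coordinate sweep the entire coset $q+\langle p\rangle=q+\gcd(p,k)\,\Z/k\Z$; choosing the iterate that achieves the minimum residue of this coset either strictly reduces the second coordinate or sends it to $0$. Alternating the two moves then either drives a coordinate to $0$ directly, or forces the orbit into the proper sublattice $g\,\Z/k\Z\oplus g\,\Z/k\Z$ with $g=\gcd(p,q,k)>1$, where the problem rescales to an identical one with strictly smaller modulus $k/g$ and the inductive hypothesis applies. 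The composition of substitutions that realises this reduction furnishes the desired words $u=u(a,b)$ and $v=v(a,b)$.
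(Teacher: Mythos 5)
Your proposal takes essentially the same route as the paper: both use the commutator identity $[f,g]=[fg^n,g]=[f,gf^n]$, both derive additivity of rotation numbers on positive words in a crossed pair by writing $a=r^p\hat a$, $b=r^q\hat b$ (with $\hat a,\hat b$ central shifts of $a,b$ of rotation number $0$) and invoking Lemma \ref{rot 0 lemma}, and both then run a Euclidean-type reduction on the pair $(k\rot(a),k\rot(b))\in(\Z/k\Z)^2$. The point of divergence is in how you dispatch the constraint that only \emph{positive} exponents are available. The paper runs the ordinary integer Euclidean algorithm on $(m,n)$ (with genuine subtractions $n_{i}=n_{i-1}-q_im_{i-1}$, etc.) and then notes that, because rotation numbers are only defined modulo $\Z$, one may replace each subtraction by multiplication by a positive exponent $\bar q_i\equiv -q_i\pmod k$; this one observation removes the positivity constraint entirely. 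You instead stay inside $(\Z/k\Z)^2$ with the literal moves $(p,q)\mapsto(p,q+sp)$ and $(p,q)\mapsto(p+sq,q)$ and argue combinatorially by coset-sweeping, which is correct but less streamlined.

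Two small remarks on your arithmetic reduction. First, the ``sublattice'' / induction-on-$k$ fallback is unnecessary: after sweeping the second coordinate to $q'=q\bmod\gcd(p,k)$ you have $q'<\gcd(p,k)\le p$, and after sweeping the first you get $p''<\gcd(q',k)\le q'<p$; so the alternating coset-minimum reduction produces a strictly decreasing sequence of positive integers regardless of $\gcd(p,q,k)$, hence always terminates with a zero coordinate in finitely many steps, no rescaling needed. Second, you should make explicit what you do once the \emph{second} coordinate is the one that hits zero (your ``trivial case'' only flags it): from $(p,0)$ with $p\neq 0$ apply $(p,q)\mapsto(p,p+q)$ once to reach $(p,p)$, and since $0\in p+\langle p\rangle$ a sweep of the first coordinate lands on $(0,p)$; this is the analogue of the paper's explicit swap $(u,v)\mapsto(uvu^{k-1},vu^k)$ used when the algorithm delivers $\rot(v)=0$ rather than $\rot(u)=0$. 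With these tidy-ups your argument is complete and equivalent to the paper's.
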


\begin{proof}
We will make repeated use of the following elementary algebraic computation, which applies to any commutator.  
\begin{equation}  \label{comm eq}
[f,g] = [f, gf^n] = [fg^n, g] \text{ for any } n \in \Z
\end{equation}

Now let $m = k \rot(a)$ and $n = k \rot(b)$.   Let $r: S^1 \to S^1$ denote the order $k$ rigid rotation with rotation number $\frac{1}{k}$, which commutes with $a$ and $b$. Let $a' = a r^{k-m}$ and $b' = b r^{k-n}$. Note that  $\rot(a') = \rot(b') = 0$, and that $(a', b')$ is a crossed pair in $\PSLk$.   

As a warm up for the rest of the proof, we do a simple computation.  Let $\alpha_1, \beta_1, ... \alpha_s, \beta_s$ be integers.  Then
$$a^{\alpha_1} b^{\beta_1}...a^{\alpha_s} b^{\beta_s} = (a')^{\alpha_1} (b')^{\beta_1}... (a')^{\alpha_s} (b')^{\beta_s} r^{(m(\alpha_1 +...+\alpha_s) + n( \beta_1 + ...+\beta_s))}$$  
By Lemma \ref{rot 0 lemma}, $\rot((a')^{\alpha_1} (b')^{\beta_1}... (a')^{\alpha_s} (b')^{\beta_s}) = 0$, so 
\begin{equation} \label{warm up eq}
\rot(a^{\alpha_1} b^{\beta_1}...a^{\alpha_s} b^{\beta_s} ) = \frac{m(\alpha_1 +...+\alpha_s) + n( \beta_1 + ...+\beta_s)}{k}.
\end{equation}

This kind of computation lets us use the Euclidean algorithm to reduce the rotation number of $a$ while preserving the commutator $[a,b]$ as follows.  
First, use the (standard) Euclidean algorithm to produce a sequence of pairs of integers
$$(m, n) = (m_0, n_0), \, (m_0, n_1), \,(m_1, n_1), \,(m_1, n_2), ...$$
 with $n_i = n_{i-1} -q _i m_{i-1}$ and $m_i = m_{i-1} - p_i  n_{i}$, terminating after $d$ steps with a pair either of the form $(m_d,0)$ or $(0, n_d)$.  
Choose $\bar{q}_i >0$ and $\bar{p}_i>0$ such that $\bar{q}_i \equiv -q_i \mod k$, and $\bar{p}_i \equiv -p_i \mod k$, and consider the sequence of pairs of in $\PSLk$
$$(f_0, g_0), \,(f_0, g_1), \,(f_1, g_1), \,(f_1, g_2)... $$
defined recursively by setting $(f_0, g_0) = (a,b)$ and
\begin{align*} & g_i = g_{i-1}f_{i-1}^{\bar{q}_i} \\
& f_i = f_{i-1} g_{i-1}^{\bar{p}_i}
\end{align*}
The sequence terminates at step $d$ with in a pair of words $(u, v) = (u(a,b), v(a,b))$.  
The reader may find it instructive to look at the first few terms of the sequence:
$$(a, b), \,(a, b a^{\bar{q}_1}), \,(a (ba^{\bar{q}_1})^{\bar{p}_1}, ba^{\bar{q}_1}),... $$

Our recursive definition of $f_i$ and $g_i$ together with Equation \eqref{comm eq} implies that for each $i$ we have
$$[f_i, g_i] = [f_i, g_{i+1}] = [f_{i+1}, g_{i+1}].$$ 
It follows that $[u,v] = [a,b]$.  
Moreover, Lemma \ref{crossed axes lemma} implies (inductively) that each pair in the sequence is a \emph{crossed pair}.   
Finally, the calculation in our warm-up (equation \eqref{warm up eq}) shows that $\rot(f_i) = m_i/k$  and $\rot(g_i) = n_i/k$ (recall that rotation numbers take values in $\R / \Z$). Thus, the final pair $(u,v)$ satisfies either $\rot(u) = 0$ or $\rot(v) = 0$.   If $\rot(u) = 0$, we are done.  If instead $\rot(v) = 0$, replace $(u, v)$ with the pair $(uvu^{k-1}, vu^k)$.  Lemma \ref{crossed axes lemma} implies first that $(u, vu^k)$ is a crossed pair, and then so is $(uvu^{k-1}, vu^k)$.  The commutators satisfy $[uvu^{k-1}, vu^k] = [u,v] = [a,b]$  and now $\rot(uvu^{k-1}) = \rot(v) = 0$, as desired.

\end{proof}

We can now carry out the reduction to the case $\rot(\rho_0(a_i)) = 0$, justifying our assumption in Sections \ref{technical subsec} through \ref{rot rig subsec}. 
 
Let $\rho_0$ be any maximal $\PSLk$ representation.  
Since each pair $(\rho_0(a_i), \rho_0(b_i))$ is a crossed pair, we may use Proposition \ref{Euclidean alg prop} to produce words $u_i(a,b)$ and $v_i(a,b)$ in the letters $a$ and $b$ such that -- letting $u_i(\rho)$ denote $u_i(\rho(a_i), \rho(b_i))$ and $v_i(\rho)$ denote $v_i(\rho(a_i), \rho(b_i))$ -- we have 
\begin{enumerate}[i)]
\item  $[u_i(\rho) , v_i(\rho)] = [\rho(a_i), \rho(b_i)]$ for any $\rho$, and
\item $\rot(u_i(\rho_0)) = 0$.
\end{enumerate}

Consider the function $\Phi: \Hom(\Gamma_g, \Homeo_+(S^1)) \to \Hom(\Gamma_g, \Homeo_+(S^1))$ defined as follows.  It is enough to specify the image of standard generators, and we set $\Phi(\rho)(a_i) = u_i(\rho)$ and $\Phi(\rho)(b_i) = v_i(\rho)$.  This function is continuous, and the property i) above implies that it is well defined.   Let $\eta_0 = \Phi(\rho_0)$.  Then the image of $\eta_0$ lies in $\PSL^k$, and property i) implies that $\euler(\eta_0) = \euler(\rho_0) = \frac{2g-2}{k}$.   Since $\rot(\eta_0(a_i)) = \rot(u_i(\rho_0)) = 0$ for all $i$, the results of Sections \ref{technical subsec} through \ref{rot rig subsec}  above apply here to show that $\rott[\eta(a_g), \eta(b_g)] = 1/k$ for all $\eta$ in the same connected component as $\eta_0$ of $\Hom(\Gamma_g, \Homeo_+(S^1))$.   

Now suppose that $\rho$ is a representation in the same component as $\rho_0$.  Then $\Phi(\rho)$ lies in the same component as $\eta_0$, so  
$$\rott[\rho(a_g), \rho(b_g)] = \rott[\Phi(\rho)(a_g), \Phi(\rho)(b_g)] = \rott[\eta(a_g), \eta(b_g)] = 1/k.$$
Thus, $\rott[\rho(a_g), \rho(b_g)] = 1/k$ for all $\rho$ in the same connected component as $\rho_0$.  The argument 
from the end of Proposition \ref{gen rig prop}
now applies to show that $\rot(\rho(a_g)) = \rot(\rho_0(a_g))$ for all $\rho$ in the connected component of $\rho_0$ in $\Hom(\Gamma_g, \Homeo_+(S^1))$.

In summary, we have just shown the following. 

\begin{proposition}[\textbf{Rotation rigidity for generators, general version}] \label{gen rig prop II}
Let $\rho_0$ be a maximal $\PSLk$ representation, and $\{a_1, b_1, ... a_g, b_g\}$ a standard generating set for $\Gamma_g$.  Let $X \subset \Hom(\Gamma_g, \Homeo_+(S^1))$ be a connected component containing a maximal $\PSLk$ representation $\rho_0$. 
Then $\rot(\rho(a_g)) = \rot(\rho_0(a_g))$ for all $\rho \in X$.
\end{proposition}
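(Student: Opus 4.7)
The strategy is to reduce to the zero-rotation-number case already handled in Proposition~\ref{gen rig prop}, using the Euclidean algorithm of Proposition~\ref{Euclidean alg prop} as the conversion tool; this is essentially the reduction sketched in the preceding discussion.

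First I would apply Proposition~\ref{Euclidean alg prop} to each crossed pair $(\rho_0(a_i), \rho_0(b_i))$ to produce words $u_i = u_i(a,b)$ and $v_i = v_i(a,b)$ with the property that the identity $[u_i(f,g), v_i(f,g)] = [f,g]$ holds formally for every pair $(f,g)$ (this is a group-theoretic consequence of the recursive construction via $[f,g] = [f,gf^n] = [fg^n,g]$), while $\rot(u_i(\rho_0(a_i), \rho_0(b_i))) = 0$. I would then define a self-map
$$\Phi \colon \Hom(\Gamma_g, \Homeo_+(S^1)) \to \Hom(\Gamma_g, \Homeo_+(S^1))$$
on the standard generating set by $\Phi(\rho)(a_i) := u_i(\rho(a_i), \rho(b_i))$ and $\Phi(\rho)(b_i) := v_i(\rho(a_i), \rho(b_i))$. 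This is a well-defined representation because
$$\prod_{i=1}^g [\Phi(\rho)(a_i), \Phi(\rho)(b_i)] = \prod_{i=1}^g [\rho(a_i), \rho(b_i)] = 1,$$
and $\Phi$ is continuous since each $u_i, v_i$ is a word in $\rho(a_i), \rho(b_i)$. Setting $\eta_0 := \Phi(\rho_0)$, the image of $\eta_0$ still lies in $\PSLk$, its Euler number equals $\euler(\rho_0) = (2g-2)/k$ (the product of lifted commutators is preserved), so $\eta_0$ is again a maximal $\PSLk$ representation, and now $\rot(\eta_0(a_i)) = 0$ for every $i$.

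With the reduction in place, the main result of Sections~\ref{technical subsec}--\ref{rot rig subsec} applies to $\eta_0$: the proof of Proposition~\ref{gen rig prop} yields $\rott[\eta(a_g), \eta(b_g)] = 1/k$ for every $\eta$ in the component $X'$ containing $\eta_0$. For any $\rho$ in the component $X$ of $\rho_0$, continuity of $\Phi$ places $\Phi(\rho) \in X'$, and the identity $[\Phi(\rho)(a_g), \Phi(\rho)(b_g)] = [\rho(a_g), \rho(b_g)]$ gives
$$\rott[\rho(a_g), \rho(b_g)] = \rott[\Phi(\rho)(a_g), \Phi(\rho)(b_g)] = 1/k.$$
Lemma~\ref{comm lemma} then forces $\rot(\rho(a_g))$ to be rational of the form $p/q$ with $q \leq k$; since such values form a discrete subset of $\R/\Z$, continuity of $\rot$ along paths in the connected set $X$ pins $\rot(\rho(a_g))$ down to the single value $\rot(\rho_0(a_g))$.

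The only genuinely delicate point is verifying that $\Phi$ is a well-defined, continuous self-map of the representation variety, and both properties drop out of Proposition~\ref{Euclidean alg prop}(i) together with the basic fact that group words define continuous operations on $\Homeo_+(S^1)$. Beyond this bookkeeping, all of the hard dynamical content has been absorbed into the zero-rotation case already established, so no new estimates are required.
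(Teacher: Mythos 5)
Your proposal matches the paper's proof essentially step for step: define $\Phi$ via the Euclidean-algorithm words $u_i, v_i$ from Proposition~\ref{Euclidean alg prop}, observe it is a continuous self-map preserving each commutator (hence preserving the relator and the Euler number), apply the zero-rotation-number result of Proposition~\ref{gen rig prop} to $\eta_0 = \Phi(\rho_0)$, and pull the conclusion $\rott[\cdot,\cdot] = 1/k$ back along $\Phi$. The only cosmetic difference is at the very end: the paper argues by contradiction (if $\rot(\rho(a_g))$ changed, continuity would produce a representation with an irrational or large-denominator rotation number, contradicting Lemma~\ref{comm lemma}), whereas you observe directly that $\rot_{a_g}$ maps the connected set $X$ continuously into the finite set $\{p/q : q\le k\}\subset \R/\Z$ and hence is constant — an equivalent and arguably cleaner way to finish.
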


\subsection{Finishing the proof of Theorem \ref{rot rig thm}}

Theorem \ref{rot rig thm} will now follow from Proposition \ref{gen rig prop II} using a covering trick due to Matsumoto in \cite{Matsumoto}.  Let $X$ be a component of $\Hom(\Gamma_g, \Homeo_+(S^1))$ containing a maximal $\PSLk$ representation $\rho_0$, and let $\rho \in X$.   
Let $\gamma \in \Gamma_g$, and let $\alpha$ be a curve on $\Sigma_g$ representing $\gamma$.   If $\alpha$ is a nonseparating simple closed curve, then we may include $\gamma$ into a standard generating set for $\Gamma_g$, in which case Proposition \ref{gen rig prop II} implies that $\rot(\rho(\gamma)) = \rot(\rho_0(\gamma))$.  

If $\alpha$ is not a simple closed curve, we may take a finite cover $\Sigma_{g'}$ of $\Sigma_g$ such that $\alpha$ lifts to a nonseparating simple closed curve $\beta$ (such a cover always exists -- Scott's theorem in \cite{Scott} implies that $\alpha$ can be lifted to a simple closed curve in some finite cover, and taking a further cover we can ensure that the lift is nonseparating).  Let $a_{g'} \in \pi_1(\Sigma_{g'})$ represent this lift, and include $a_{g'}$ in a standard generating set $\{a_1, b_1 ...  a_{g'}, b_{g'}\}$ for $\Sigma'$.  
If the degree of the cover $\Sigma_{g'} \overset{\pi}\rightarrow \Sigma_g$ is $m$, then $g' = m(g-1)+1$.  

Consider now the (continuous) function $\pi^*: \Hom(\Gamma_g, \Homeo_+(S^1)) \to \Hom(\Gamma_{g'}, \Homeo_+(S^1))$ defined by $\rho \mapsto \rho \circ \pi_*$.   The Euler number is multiplicative with respect to covers, so we have  
$$\euler(\pi^*(\rho)) = m\euler(\rho) = \frac{m(2g-2)}{k} = \frac{2g' -2}{k}$$  
In particular, $\pi^*(\rho_0)$ is also a maximal $\PSLk$ representation.  Since $\pi^*(\rho)$ lies in the same component of $\Hom(\Gamma_{g'}, \Homeo_+(S^1))$ as $\pi^*(\rho_0)$, Proposition \ref{gen rig prop II} implies that  
$$\rot(\pi^*(\rho_0(a_{g'}))) = \rot(\pi^*(\rho(a_{g'}))).$$  Since $\pi_*(a_{g'}) = \alpha$, we conclude that $\rot(\rho_0(\alpha)) = \rot(\rho(\alpha))$ as desired.  
This concludes the proof of Theorem \ref{rot rig thm}.  

\qed

\section{Proof of Theorem \ref{main thm}} \label{main pf sec}

Theorem \ref{main thm} will now follow from Theorem \ref{rot rig thm}.  We recall the statement here.  
\medskip 

\noindent \textbf{Theorem \ref{main thm}.} For each nontrivial divisor $k$ of $2g-2$, there are at least $k^{2g} +1$ components of $\Hom(\Gamma_g, \Homeo_+(S^1))$ consisting of representations with Euler number $\frac{2g-2}{k}$.   

\noindent In particular, two representations into $\PSLk$ that lie in different components of $\Hom(\Gamma_g, \PSLk)$ necessarily lie in different components of $\Hom(\Gamma_g, \Homeo_+(S^1))$.
\smallskip

\begin{proof}
Let $k$ be a nontrivial divisor of $2g-2$.  As explained in the proof of Proposition \ref{maximal k reps prop}, representations $\rho: \Gamma_g \to \PSLk$ with Euler number $\euler(\rho) = \frac{2g-2}{k}$ are precisely the lifts of faithful Fuchsian representations $\nu: \Gamma_g \to \PSL(2, \R)$.   Each representation $\nu$ has $k^{2g}$ lifts; these can be distinguished by reading the rotation numbers of each of a standard set of generators, which may take any value in $\{0, \frac{1}{k}, ... , \frac{k-1}{k}\}$.  

Goldman's theorem (Theorem \ref{goldman thm}) states that the rotation numbers of the generators are a complete invariant of connected components of $\Hom(\Gamma_g, \PSLk)$ -- there are $k^{2g}$ components of $\Hom(\Gamma_g, \PSLk)$, distinguished by the rotation numbers of a standard set of generators.   Thus, if $\rho_1$ and $\rho_2$ are representations that lie in different components of $\Hom(\Gamma_g, \PSLk)$, then the rotation number of some generator differs under $\rho_1$ and $\rho_2$, and so by Theorem  \ref{rot rig thm}, $\rho_1$ and $\rho_2$ must lie in different components of $\Hom(\Gamma_g, \Homeo_+(S^1))$.

Finally, to conclude that $\Hom(\Gamma_g, \Homeo_+(S^1))$ has at least $k^{2g} +1$ connected components consisting of representations with Euler number $\frac{2g-2}{k}$, it suffices to exhibit a representation $\rho: \Gamma_g \to \Homeo_+(S^1)$ with $\euler(\rho) = \frac{2g-2}{k}$ and $\rho(a_g) \neq \frac{m}{k}$ for any integer $m$.  Such a representation can be constructed as follows.  Take a representation $\mu: \Gamma_{g-1} \to \Homeo_+(S^1)$ with $\euler(\mu) = \frac{2g-2}{k}$ and extend this to a representation $\rho: \Gamma_g \to \Homeo_+(S^1)$ as follows:  If  $\{a_1, b_1, ... a_{g-1}, b_{g-1} \}$ is a standard generating set for $\Gamma_{g-1}$, then we can take $\{a_1, b_1, ... a_{g}, b_{g} \}$ a standard generating set for $\Gamma_g$, and define $\rho$ on generators by 
$$ \begin{array}{lll}
 \rho(a_i) = \nu(a_i), & \rho(b_i) = \nu(b_i), \,\, &  \text{ for } i = 1, 2, ...,g-1 \\ 
 \rho(a_g) = r_\alpha,  & \rho(b_g) = r_\beta  &
\end{array}
$$
where $r_\alpha$ and $r_\beta$ are rigid rotations by arbitrary angles $\alpha$ and $\beta$, with $2 \pi k \beta \notin \Z$.  Since $[r_\alpha, r_\beta] = \id$, we have 
$$[\tilde{\rho}(a_1), \tilde{\rho}(b_1)]...[\tilde{\rho}(a_{g-1}), \tilde{\rho}(b_{g-1})] [\tilde{r}_\alpha, \tilde{r}_\beta] = [\tilde{\rho}(a_1), \tilde{\rho}(b_1)]...[\tilde{\rho}(a_{g-1}), \tilde{\rho}(b_{g-1})],$$
a translation by $\frac{2g-2}{k}$.  Thus, $\rho$ is a representation and has Euler number $\euler(\rho) = \frac{2g-2}{k}$. 

\end{proof}

\section{Semi-conjugate representations: Theorem \ref{semiconj thm}} \label{semiconj sec}

In this section we recall the notion of \emph{semi-conjugacy} and prove Theorem \ref{semiconj thm}.  The reader should note that use of the term ``semi-conjugacy" for representations to $\Homeo_+(S^1)$ in the existing literature is inconsistent.  We will use the following definition, which appears in \cite{Ghys groups acting}.  

\begin{definition}[Degree 1 monotone map]
A map $h: S^1 \to S^1$ is called a \emph{degree 1 monotone map} if it is continuous and admits a lift $\tilde{h}: \R \to \R$, equivariant with respect to integer translations, and nondecreasing on $\R$.  
\end{definition}

\begin{definition}[Semi-conjugacy of representations]
Let $\Gamma$ be any group.  Two representations $\rho_1$ and $\rho_2$ in $\Hom(\Gamma, \Homeo_+(S^1))$ are \emph{semi-conjugate} if there is a degree one monotone map $h: S^1 \to S^1$ such that $\rho_1(\gamma) \circ h = h \circ \rho_2(\gamma)$ for all $\gamma \in \Gamma$.    
\end{definition}

Semi-conjugacy is not an equivalence relation because it is not symmetric.  However, there is a relatively simple description of representations that lie in the same class under the equivalence relation generated by semi-conjugacy.  This description is due to Calegari and Dunfield.  

\begin{proposition}[Definition 6.5 and Lemma 6.6 of \cite{CD}] \label{CD prop}
Two representations $\rho_1$ and $\rho_2$ in $\Hom(\Gamma, \Homeo_+(S^1))$ lie in the same semi-conjugacy class if and only if there is a third representation $\rho: \Gamma \to \Homeo_+(S^1)$ and degree one monotone maps $h_1$ and $h_2$ of $S^1$ such that $\rho_i \circ h_i = h_i \circ \rho$.  
 \end{proposition}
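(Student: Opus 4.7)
The argument splits into the two directions of the equivalence. The \emph{if} direction is essentially tautological: given $\rho$ with $\rho_i \circ h_i = h_i \circ \rho$ for $i=1,2$, each $\rho_i$ is directly semi-conjugate to $\rho$, so $\rho_1$ and $\rho_2$ both lie in the equivalence class of $\rho$ and hence in a common class.

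For the \emph{only if} direction I would argue by induction on the length $n$ of a semi-conjugacy zigzag $\rho_1 = \sigma_0, \sigma_1, \ldots, \sigma_n = \rho_2$, in which each consecutive pair is directly linked by a semi-conjugacy in one of the two possible directions. The inductive hypothesis is that a common cover $\tau_j$ has been built for $\sigma_0, \ldots, \sigma_j$: a representation $\tau_j: \Gamma \to \Homeo_+(S^1)$ together with degree one monotone maps $k_0, \ldots, k_j$ satisfying $\sigma_i \circ k_i = k_i \circ \tau_j$ for every $i \leq j$. When the next link has $\sigma_j$ semi-conjugating onto $\sigma_{j+1}$ via a monotone map $m$, the representation $\tau_j$ itself continues to serve, with $k_{j+1} := m \circ k_j$, since degree one monotone maps are closed under composition and the intertwining identity follows by a routine diagram chase.

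The main obstacle is the opposite orientation: $\sigma_{j+1}$ is semi-conjugate to $\sigma_j$ via some $n$, so both $\tau_j$ and $\sigma_{j+1}$ are covers of $\sigma_j$, and one must build a new common cover $\tau_{j+1}$ of these two that is compatible with their shared projections onto $\sigma_j$. Concretely I would carry this out as a pullback on universal covers: lift to monotone $\Z$-equivariant maps $\tilde k_j, \tilde n : \R \to \R$ and form $P = \{(x,y) \in \R \times \R : \tilde k_j(x) = \tilde n(y)\}$, equipped with the diagonal $\Z$-action by simultaneous integer translation. The product $\Gamma$-action $\gamma \cdot (x,y) = (\tilde \tau_j(\gamma)(x), \tilde \sigma_{j+1}(\gamma)(y))$ preserves $P$, because $\tilde k_j$ and $\tilde n$ intertwine their respective actions with a common lift of $\sigma_j$. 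Collapsing each maximal ``plateau'' of constant coordinate in $P/\Z$ to a single point yields the circle on which $\tau_{j+1}$ acts, and projection to either coordinate factor supplies the required degree one monotone map.

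The bulk of the technical work lies in verifying that this pullback construction genuinely produces a circle action and that the projections are continuous, monotone, and of degree one. As a shortcut, one may instead appeal to the Ghys--Matsumoto theorem identifying semi-conjugacy classes with bounded Euler classes in $H^2_b(\Gamma;\Z)$: semi-conjugate representations have equal bounded Euler classes, and any representation with a given bounded Euler class semi-conjugates onto a canonical minimal model. The desired $\rho$ may then be chosen as a suitable blow-up of this minimal model, with enough wandering intervals inserted so that it admits degree one monotone projections onto $\rho_1$ and $\rho_2$ simultaneously.
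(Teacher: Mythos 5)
The paper does not prove this proposition --- it is quoted directly from Calegari--Dunfield (their Definition 6.5 and Lemma 6.6) --- so there is no in-paper argument to compare against.

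Your zigzag induction is the right framing, and the fiber-product idea is in the right spirit, but as written the construction has two genuine gaps. First, the ``product $\Gamma$-action'' on $P$ is not automatic: the lifts $\gamma\mapsto\tilde\tau_j(\gamma)$ and $\gamma\mapsto\tilde\sigma_{j+1}(\gamma)$ to $\Homeo_\Z(\R)$ cannot in general be chosen to be homomorphisms, so each carries an integer-valued cocycle, and the diagonal quotient $P/\Z$ inherits an honest $\Gamma$-action only if the two cocycles coincide. They do coincide --- both equal the cocycle of the common lift of $\sigma_j$ that each intertwines with --- but this must be verified rather than assumed. Second, ``collapsing each maximal plateau of constant coordinate to a point'' cannot produce what you claim: the $x$-projection descends only past plateaus on which $x$ is constant, and the $y$-projection only past plateaus on which $y$ is constant, so for both projections to descend you could collapse only sets constant in both coordinates, i.e.\ single points, which does nothing; meanwhile $P/\Z$ is not itself a circle whenever $\tilde k_j$ and $\tilde n$ have nondegenerate fibers over a common value, since $P$ then contains a genuine two-dimensional rectangle. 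A correct version replaces each such rectangle by a monotone boundary path (the lower-right one, say), giving a $\Z$- and $\Gamma$-equivariant monotone arc whose diagonal quotient is the required circle, with the two coordinate projections visibly continuous, monotone, and degree one. Finally, the proposed ``shortcut'' via bounded Euler class is not one: choosing how much to blow up the minimal model so that it covers both $\rho_1$ and $\rho_2$ compatibly is exactly the pullback problem in another guise, and it also invokes Ghys's theorem, which is considerably deeper than the statement being proved.
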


In \cite{Ghys semiconj}, Ghys showed that two representations in $\Hom(\Gamma, \Homeo_+(S^1))$ lie in the same semi-conjugacy class if and only if they have the same \emph{bounded integer Euler class} in $H^2_b(\Gamma; \Z)$.  
Matsumoto later translated this into a condition defined purely in terms of (lifted) rotation numbers.  Before we state this condition, note that for any two elements $a$, $b \in \Homeo_+(S^1)$ with lifts $\tilde{a}$ and $\tilde{b}$ in $\Homeo_\Z(\R)$, the number 
$$\tau(a,b):= \rott(\tilde{a} \tilde{b}) - \rott(\tilde{a}) -\rott(\tilde{b})$$
 does not depend on the choice of lifts $\tilde{a}$ and $\tilde{b}$.   

\begin{proposition}[Matsumoto, \cite{Matsumoto num}] \label{matsumoto prop}
Let $\Gamma$ be a group with generating set $\{\gamma_i\}$.  Two representations $\rho_1$ and $\rho_2$ in $\Hom(\Gamma, \Homeo_+(S^1))$ lie in the same semi-conjugacy class if and only if the following two conditions hold
\begin{enumerate}[i)]
\item Each generator $\gamma_i$ of $\Gamma$ satisfies $\rot(\rho_1(\gamma)) = \rot(\rho_2(\gamma))$. 
\item Each pair of elements $\gamma$ and $\gamma'$ in $\Gamma$ satisfies $\tau(\rho_1(\gamma), \rho_1(\gamma')) = \tau (\rho_2(\gamma), \rho_2(\gamma'))$.  
\end{enumerate}
\end{proposition}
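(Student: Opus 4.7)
The plan is to deduce Matsumoto's characterization from Ghys's theorem (cited just before the statement): two representations $\rho_1,\rho_2:\Gamma\to\Homeo_+(S^1)$ are semi-conjugate if and only if they pull back the same class in $H^2_b(\Gamma;\Z)$, where the generator of $H^2_b(\Homeo_+(S^1);\Z)$ is represented by the bounded integer Euler cocycle associated with any set-theoretic section $\Homeo_+(S^1)\to\Homeo_\Z(\R)$. The strategy is to pick a canonical section, read off the resulting cocycle in terms of rotation numbers and $\tau$, and then argue that conditions (i) and (ii) say exactly that the two cocycles coincide on the nose.

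First I would fix the canonical section: for each $\gamma\in\Gamma$ let $\tilde\rho(\gamma)\in\Homeo_\Z(\R)$ be the unique lift with $\rott(\tilde\rho(\gamma))\in[0,1)$, so that $\rott(\tilde\rho(\gamma))$ is literally the $[0,1)$-representative of $\rot(\rho(\gamma))\in\R/\Z$. Then $c_\rho(\gamma,\gamma'):=\tilde\rho(\gamma)\tilde\rho(\gamma')\tilde\rho(\gamma\gamma')^{-1}$ is an integer translation $T^{n(\gamma,\gamma')}$ representing the bounded Euler class (it is manifestly bounded since the Calegari--Walker type estimates of Section \ref{CW sec} force $|\tau|\le 1$). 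Because $T^n$ commutes with everything, $\rott(\tilde\rho(\gamma)\tilde\rho(\gamma'))=n(\gamma,\gamma')+\rott(\tilde\rho(\gamma\gamma'))$, and expanding the definition of $\tau$ gives the key identity
\[
c_\rho(\gamma,\gamma')=\tau(\rho(\gamma),\rho(\gamma'))+\rot(\rho(\gamma))+\rot(\rho(\gamma'))-\rot(\rho(\gamma\gamma')).
\]

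For the forward direction, a semi-conjugacy $h\circ\rho_2(\gamma)=\rho_1(\gamma)\circ h$ admits a monotone $\Z$-equivariant lift $\tilde h$, and the relation then lifts to $\tilde h\circ\tilde\rho_2(\gamma)=\tilde\rho_1(\gamma)\circ\tilde h$ for a specific compatible lift $\tilde\rho_1(\gamma)$. Since $\tilde h$ stays within bounded distance of the identity, iterating gives $\rott(\tilde\rho_1(\gamma))=\rott(\tilde\rho_2(\gamma))$ (in particular (i) on every $\gamma$), and applying the same relation to products yields $\rott(\tilde\rho_1(\gamma)\tilde\rho_1(\gamma'))=\rott(\tilde\rho_2(\gamma)\tilde\rho_2(\gamma'))$, which is (ii).

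For the reverse direction, I would first bootstrap (i) from the generators to all of $\Gamma$ by induction on word length. The inductive step uses that for any $\gamma'$ and generator $\gamma_i$,
\[
\rot(\rho(\gamma'\gamma_i))\equiv\rot(\rho(\gamma'))+\rot(\rho(\gamma_i))+\tau(\rho(\gamma'),\rho(\gamma_i))\pmod{\Z},
\]
which is immediate from the definition of $\tau$ together with the canonical-lift normalization and Lemma \ref{additivity lemma}; conditions (i) and (ii) make every term on the right match between $\rho_1$ and $\rho_2$, forcing equality of the left sides. Once $\rot$ agrees on all of $\Gamma$, the displayed formula for $c_\rho$ forces $c_{\rho_1}=c_{\rho_2}$ as $\Z$-valued $2$-cocycles, hence certainly as classes in $H^2_b(\Gamma;\Z)$, and Ghys's theorem then gives semi-conjugacy. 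The main obstacle I anticipate is the bookkeeping around the canonical-lift normalization — making sure real-valued and integer-valued quantities really match on the nose rather than just modulo $\Z$ — after which the rest is essentially algebraic manipulation.
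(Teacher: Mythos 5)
The paper does not prove Proposition~\ref{matsumoto prop}; it is quoted from Matsumoto's paper \cite{Matsumoto num}, so there is no in-text proof to compare against. Your derivation from Ghys's bounded-cohomological characterization (which the paper states just beforehand) is a legitimate route and is essentially correct: picking the section $s(f)=\tilde f$ with $\rott(\tilde f)\in[0,1)$, the identity $n(\gamma,\gamma')=\tau(\rho(\gamma),\rho(\gamma'))+\rott(\tilde\rho(\gamma))+\rott(\tilde\rho(\gamma'))-\rott(\tilde\rho(\gamma\gamma'))$ does hold, the forward direction via a $\Z$-equivariant monotone lift $\tilde h$ is sound (better run through Proposition~\ref{CD prop} so you only ever need one-directional semi-conjugacies with a common $\rho$), and the bootstrap of (i) from generators to all of $\Gamma$ works, provided you also note $\rot(g^{-1})=-\rot(g)$ so that inverses of generators are covered in the word-length induction.

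A few small points worth tightening. First, in your displayed identity you write $\rot(\rho(\gamma))$ where you mean the $[0,1)$-representative, i.e.\ $\rott(\tilde\rho(\gamma))$ for the canonical lift; the formula is between real numbers, not elements of $\R/\Z$, and conflating the two is exactly where ``on the nose vs.\ mod $\Z$'' slips tend to hide. Second, the mod-$\Z$ identity you use in the bootstrap, $\rot(\rho(\gamma'\gamma_i))\equiv\rot(\rho(\gamma'))+\rot(\rho(\gamma_i))+\tau(\rho(\gamma'),\rho(\gamma_i))$, follows immediately from the definition of $\tau$ reduced mod $\Z$; the canonical-lift normalization and Lemma~\ref{additivity lemma} are not actually needed there. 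Third, the bound $|\tau|\le 1$ is not a Calegari--Walker result but a classical fact (essentially the two-generator Milnor--Wood estimate of \cite{Wood}, and it is recorded as Lemma~\ref{comm lemma} in a closely related form); citing it as such would be cleaner. With these cosmetic fixes the argument stands.
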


As a corollary of Matsumoto's condition we have the following.  Recall that for $\gamma \in \Gamma$, we let $\rot_\gamma: \Hom(\Gamma, \Homeo_+(S^1)) \to \R/\Z$ be defined by $\rot_\gamma(\rho) = \rot(\rho(\gamma))$.  

\begin{corollary} \label{matsumoto cor}
Let $\Gamma$ be any group and $U \subset \Hom(\Gamma, \Homeo_+(S^1))$ a connected set.   If for all $\gamma \in \Gamma$ the function $\rot_\gamma$ is constant on $U$, then $U$ consists of a singe semi-conjugacy class.  
\end{corollary}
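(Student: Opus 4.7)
The natural plan is to reduce to Matsumoto's criterion (Proposition \ref{matsumoto prop}). Fix two representations $\rho_1, \rho_2 \in U$; the goal is to verify Matsumoto's two conditions (i) and (ii) relative to some generating set for $\Gamma$. Condition (i), equality of rotation numbers on generators, is immediate from the hypothesis that $\rot_\gamma$ is constant on $U$. Everything therefore comes down to verifying condition (ii): for every pair $\gamma, \gamma' \in \Gamma$, we must show $\tau(\rho_1(\gamma), \rho_1(\gamma')) = \tau(\rho_2(\gamma), \rho_2(\gamma'))$.

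To establish this, I would study the function $\phi_{\gamma,\gamma'}: U \to \R$ defined by $\phi_{\gamma,\gamma'}(\rho) := \tau(\rho(\gamma), \rho(\gamma'))$ and show it is constant on $U$ via a ``continuous map with constant mod-$\Z$ reduction on a connected set'' argument. On the one hand, reducing modulo $\Z$ and using that $\rott$ reduces to $\rot$, we have
\[\phi_{\gamma,\gamma'}(\rho) \equiv \rot(\rho(\gamma\gamma')) - \rot(\rho(\gamma)) - \rot(\rho(\gamma')) \pmod{\Z},\]
and the right-hand side is constant on $U$ by hypothesis (applied to the three elements $\gamma\gamma'$, $\gamma$, $\gamma'$). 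On the other hand, $\phi_{\gamma,\gamma'}$ is continuous in $\rho$: the evaluation maps $\rho \mapsto \rho(\gamma)$ and $\rho \mapsto \rho(\gamma')$ are continuous into $\Homeo_+(S^1)$, the projection $\Homeo_\Z(\R) \to \Homeo_+(S^1)$ is a principal $\Z$-cover in the uniform topology, hence admits continuous local sections, and $\rott$ is continuous on $\Homeo_\Z(\R)$. Since $\tau$ is independent of the choice of lifts, these local sections combine to show that $\phi_{\gamma,\gamma'}$ is continuous on all of $U$.

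Combining the two facts, $\phi_{\gamma,\gamma'}: U \to \R$ is a continuous real-valued function on the connected set $U$ whose values all lie in a single coset of $\Z \subset \R$. Such a function must be constant. This verifies condition (ii) of Proposition \ref{matsumoto prop}, and we conclude that any two $\rho_1, \rho_2 \in U$ lie in the same semi-conjugacy class, as desired. There is no serious obstacle in this argument; the only mildly technical point is the continuity of $\phi_{\gamma,\gamma'}$, which rests on the standard covering-space structure of $\Homeo_\Z(\R) \to \Homeo_+(S^1)$.
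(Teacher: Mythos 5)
Your proof is correct and follows essentially the same approach as the paper: both verify Matsumoto's condition (ii) by observing that $\tau(\rho(\gamma),\rho(\gamma'))$ is a continuous function of $\rho$ whose reduction mod $\Z$ is fixed (using the constancy of $\rot_\gamma$, $\rot_{\gamma'}$, and $\rot_{\gamma\gamma'}$), hence is constant on the connected set $U$. The paper packages this as a continuous integer-valued difference function $F_{\gamma,\gamma'}(\rho,\nu)$ on $U\times U$ vanishing on the diagonal, whereas you work directly with $\phi_{\gamma,\gamma'}$ on $U$ taking values in a single $\Z$-coset, but these are the same argument.
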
 

\begin{proof}
By assumption, condition i) of Proposition \ref{matsumoto prop} is automatically satisfied for any two representations in $U$.  
To see that ii) is satisfied, fix any $\gamma$ and $\gamma'$ in $\Gamma$, and define a function $F_{\gamma, \gamma'}: U \times U \to \R$ by 
$$F_{\gamma, \gamma'}(\rho, \nu) = \tau(\rho(\gamma) \rho(\gamma')) - \tau(\nu(\gamma), \nu(\gamma')).$$
Then $F_{\gamma, \gamma'}$ is clearly continuous.  We will show it is integer valued, hence constant.  Since $F_{\gamma, \gamma'}(\rho, \rho) = 0$, it will follow that condition ii) is satisfied.  

That $F_{\gamma, \gamma'}$ is constant comes from the fact that $\rot_\gamma$ and $\rot_{\gamma'}$ are constant on $U$.  By definition, 
$$F_{\gamma, \gamma'}(\rho, \nu) = \rott(\tilde{\rho}(\gamma) \tilde{\rho}(\gamma')) - \rott(\tilde{\nu}(\gamma) \tilde{\nu}(\gamma')) - \rott(\tilde{\rho}(\gamma)) + \rott(\tilde{\rho}(\nu)) -\rott(\tilde{\rho}(\gamma')) + \rott(\tilde{\nu}(\gamma'))$$

Taken mod $\Z$, this expression becomes
$$\rot(\rho(\gamma) \rho(\gamma')) - \rot(\nu(\gamma)\nu(\gamma')) - \rot(\rho(\gamma)) + \rot(\rho(\nu)) -\rot(\rho(\gamma')) + \rot(\nu(\gamma')),$$
which is zero, since $\rot_\gamma$  and $\rot_{\gamma'}$ are constant on $U$.  

\end{proof}

 Combining Theorem \ref{rot rig thm} with Corollary \ref{matsumoto cor}, we conclude that if $X$ is a connected component of  $\Hom(\Gamma_g, \Homeo_+(S^1))$ that contains a maximal  $\PSLk$ representation, then all representations in $X$ lie in the same semi-conjugacy class.   Thus, to finish the proof of Theorem \ref{semiconj thm}, it suffices to prove that representations in the same semi-conjugacy class always lie in the same component of $\Hom(\Gamma_g, \Homeo_+(S^1))$.  In fact, they always lie in the same \emph{path}-component.   

\begin{proposition}
If $\rho_1$ and $\rho_2$ lie in the same semi-conjugacy class, then there is a continuous path $\rho_t$ in $\Hom(\Gamma_g, \Homeo_+(S^1))$ from $\rho_0$ to $\rho_1$.  
\end{proposition}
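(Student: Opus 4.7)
The plan is to use the characterization of the semi-conjugacy equivalence relation in Proposition \ref{CD prop}: it suffices to show that if $\sigma \in \Hom(\Gamma_g, \Homeo_+(S^1))$ is semi-conjugate to a representation $\rho$ via a single degree-one monotone map $h$, meaning $\sigma(\gamma) \circ h = h \circ \rho(\gamma)$ for every $\gamma \in \Gamma_g$, then $\rho$ and $\sigma$ lie in the same path-component. Indeed, if $\rho_1$ and $\rho_2$ lie in the same semi-conjugacy class, Proposition \ref{CD prop} supplies a common representation $\rho$ together with semi-conjugacies to each $\rho_i$, and concatenating the two resulting paths gives a path from $\rho_1$ to $\rho_2$.

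Given such an $h$, fix a nondecreasing lift $\tilde{h}: \R \to \R$ commuting with integer translations. The construction is to interpolate linearly to the identity: set
$$\tilde{h}_t(x) := (1-t)\,x + t\,\tilde{h}(x), \qquad t \in [0,1].$$
For $t \in [0,1)$ the map $\tilde{h}_t$ is strictly increasing (since $\tilde{h}$ is nondecreasing and $1-t > 0$) and still commutes with integer translations, so it descends to a homeomorphism $h_t \in \Homeo_+(S^1)$; for $t=1$ we recover $h_1 = h$. Define
$$\rho_t := \begin{cases} h_t \circ \rho \circ h_t^{-1} & \text{if } 0 \leq t < 1, \\ \sigma & \text{if } t = 1. \end{cases}$$
On $[0,1)$ this is manifestly a continuous path of representations, by continuity of composition and inversion on $\Homeo_+(S^1)$. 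Note also that $\tilde{h}_t(x) - \tilde{h}(x) = (1-t)(x - \tilde{h}(x))$ with $x - \tilde{h}(x)$ being $1$-periodic and hence bounded, so $h_t \to h$ uniformly as $t \to 1^-$.

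The main step -- and the only real obstacle -- is continuity at $t=1$: we must show $\rho_t(\gamma) \to \sigma(\gamma)$ uniformly for each generator $\gamma$. Fix $\gamma$ and write $f := \rho(\gamma)$, $g := \sigma(\gamma)$, so that $g \circ h = h \circ f$. For $x \in S^1$, let $y_t := h_t^{-1}(x)$; uniform convergence $h_t \to h$ forces the distance from $y_t$ to the fibre $h^{-1}(x)$ to go to zero. If $h^{-1}(x) = \{y\}$ is a single point, then $y_t \to y$ and hence $\rho_t(\gamma)(x) = h_t(f(y_t)) \to h(f(y)) = g(x)$. If instead $h^{-1}(x) = [a,b]$ is a nondegenerate collapsed interval, the semi-conjugacy relation gives $h(f(a)) = g(h(a)) = g(x) = g(h(b)) = h(f(b))$, so monotonicity forces $h$ to be constant with value $g(x)$ on $[f(a), f(b)]$; since $f(y_t)$ approaches $[f(a),f(b)]$ and $|h_t - h| \to 0$ uniformly, we again obtain $h_t(f(y_t)) \to g(x)$. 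Pointwise convergence of orientation-preserving circle homeomorphisms to an orientation-preserving homeomorphism is automatically uniform (a classical consequence of monotonicity applied to lifts), and this completes the construction of the desired path from $\rho$ to $\sigma$.
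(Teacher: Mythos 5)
Your proof is correct and follows essentially the same route as the paper: reduce via Proposition~\ref{CD prop} to a single semi-conjugacy $\sigma \circ h = h \circ \rho$, approximate the degree one monotone map $h$ by a path $h_t$ of genuine homeomorphisms converging uniformly to $h$, and conjugate $\rho$ by $h_t$. The paper simply posits the existence of such a path $\hat h_t$ and disposes of the limit with the terse remark that $\rho_t(\gamma) \to \rho_1(\gamma)$ ``using the fact that $h$ is surjective''; you instead write down the explicit convex interpolation $\tilde h_t = (1-t)\,\mathrm{id} + t\,\tilde h$ and then verify the convergence at $t=1$ by a case analysis on whether the fibre $h^{-1}(x)$ is a point or a collapsed interval, closing with the Dini-type fact that pointwise convergence of monotone circle maps to a continuous limit is automatically uniform. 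This is the more complete account of the same argument, and the details you supply (strict monotonicity for $t<1$, equivariance under integer translations so $h_t$ descends, and the constancy of $h$ on $[f(a),f(b)]$ via the semi-conjugacy relation) are all sound.
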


\begin{proof}
Let $\rho: \Gamma_g \to \Homeo_+(S^1)$ and let $h_i: S^1 \to S^1$ be monotone maps as in Proposition \ref{CD prop} with $h_i \circ \rho = \rho_i \circ h_i$.  We will construct a path from $\rho_1$ to $\rho$ (and the same procedure will work to build a path from $\rho_2$ to $\rho$).  

First, for $t \in [1/2, 1)$ let $\hat{h}_t$ be a path of homeomorphisms of $S^1$ such that $\lim \limits_{t \to 1} \hat{h}_t = h$.  Then $\hat{h}_{1/2}$ is isotopic to the identity, so for $t \in [0, 1/2]$ define $\hat{h}_t$ to be a path from $\id$ to $\hat{h}_{1/2}$.  Now, for all $t \in [0,1)$ we may define $\rho_t$ by 

$$\rho_t(\gamma) = \hat{h}_t \rho(\gamma) \hat{h}_t^{-1}$$ 
Note that $\rho_0 = \rho$, and $\rho_t(\gamma) \to \rho_1(\gamma)$ as $t \to 1$ (using the fact that $h$ is surjective).  Thus, $\rho_t$ gives a continuous path from $\rho_1$ to $\rho$, and the same kind of construction will clearly work for $\rho_2$.

\end{proof}
This concludes our proof of Theorem \ref{semiconj thm}.

\section{Rigidity and Flexibility} \label{sharpness sec}

Say that a representation $\rho \in \Hom(\Gamma_g, \Homeo_+(S^1))$ is \emph{rigid} if the connected component of $\Hom(\Gamma_g, \Homeo_+(S^1))$ containing $\rho$ consists of a single semi-conjugacy class.   (Proposition \ref{matsumoto prop} and Corollary \ref{matsumoto cor} imply that this is equivalent to the functions $\rot_\gamma$ being constant on the component containing $\rho$, for each $\gamma \in \Gamma_g$.)  
Theorem \ref{semiconj thm} states that maximal $\PSLk$ reps are rigid.  We conjecture that these are effectively the \emph{only} rigid representations in $\Hom(\Gamma_g, \Homeo_+(S^1))$.

\begin{conjecture} \label{rig conj}
Suppose that $\rho \in \Hom(\Gamma_g, \Homeo_+(S^1))$ is rigid.  Then $\euler(\rho) = \frac{2g-2}{k}$ for some $k$, and $\rho$ lies in the semi-conjugacy class of a representation with image in $\PSLk$.  
\end{conjecture}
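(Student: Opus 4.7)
The plan is to prove the contrapositive: supposing that $\rho$ is not semi-conjugate to any maximal $\PSLk$ representation (for any nontrivial divisor $k$ of $2g-2$), I would exhibit a continuous deformation $\rho_t$ inside the connected component of $\rho$ along which some rotation number $\rot_\gamma$ varies, contradicting rigidity. By Proposition \ref{matsumoto prop} and Corollary \ref{matsumoto cor}, rigidity is semi-conjugacy invariant, so I may first replace $\rho$ by a convenient representative in its semi-conjugacy class. Every orientation-preserving action admits a semi-conjugate model obtained by collapsing wandering intervals and, in the presence of an exceptional minimal set, identifying gaps; after this reduction $\rho$ falls into one of three dynamical classes: (a) $\rho$ has a finite invariant set, (b) $\rho$ preserves an exceptional minimal Cantor set, or (c) $\rho$ is minimal on $S^1$.

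I would handle cases (a) and (b) by explicit deformation constructions. In case (a), with finite invariant set $F \subset S^1$, the representation factors through its action on the arcs of $S^1 \setminus F$ (together with the permutation representation on $F$); varying the action on one arc by a compactly supported isotopy that inserts a small rigid rotation supported away from $F$ produces a continuous path of representations in the same component. One checks that such insertions can be arranged to shift $\rot_{\gamma}$ for some element $\gamma$ whose action is not entirely determined by its permutation of $F$. In case (b) the gaps of the minimal Cantor set $K$ lie in finitely many $\rho(\Gamma_g)$-orbits; a Denjoy-type blowup along a single orbit of gaps yields a continuous one-parameter family of representations, and the widths of the new gaps can be varied continuously to change the combinatorial position of periodic orbits of elements $\gamma$ whose rotation numbers are rational with denominator detecting the gap orbit in question.

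The serious case is (c): $\rho$ is minimal. Here I would aim to combine the bounded-cohomology characterization of Ghys with the rotation-number techniques of this paper. Specifically, a minimal representation is determined up to conjugacy by its bounded Euler class, so rigidity of $\rho$ translates into a constraint on the bounded Euler class; one would show, by pushing the Calegari--Walker algorithm in Section \ref{CW sec} and the Euclidean algorithm of Section \ref{Euc alg subsec} into a finer analysis of which combinatorial data are realizable as rotation numbers of products of commutators, that if $\euler(\rho)$ is not of the form $(2g-2)/k$ then there exist elements of $\Gamma_g$ whose rotation number can be shifted by a local perturbation supported near an orbit of $\rho$ without altering the Euler number. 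If $\euler(\rho)=(2g-2)/k$ but $\rho$ is not semi-conjugate to a $\PSLk$ representation, I would argue, using the maximality of the lifted rotation number of a product of commutators attained by $\rho$, that the arguments of Propositions \ref{int Ni prop} and \ref{int N prop} force the dynamics of $\rho$ to match those described in Proposition \ref{maximal k reps prop}, yielding the requisite semi-conjugacy.

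The principal obstacle is case (c), particularly ruling out minimal representations with non-extremal Euler number. No analog of Matsumoto's maximal rigidity theorem is presently available at intermediate Euler numbers, and this is essentially where genuinely new input is required. A complete proof would plausibly combine the local-maximality framework developed here with a classification of minimal $\Gamma_g$-actions by bounded Euler class, perhaps via the ergodic-theoretic techniques of Burger--Iozzi--Wienhard or via further refinements of the Calegari--Walker approach to positively generated subsemigroups.
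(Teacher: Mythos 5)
This statement is marked in the paper as a \emph{conjecture}, not a theorem; the paper offers no proof, only the partial evidence of Proposition \ref{rot flex prop} (Goldman's flexibility result within $\PSLk$ itself) and a remark that progress is deferred to a forthcoming paper. So there is no ``paper's own proof'' against which to measure your attempt, and indeed your sketch rightly acknowledges that it is not a complete proof: you state explicitly that case (c), a minimal action of non-extremal Euler number, ``is essentially where genuinely new input is required.'' A proposal that labels itself incomplete and defers the central case to future work cannot resolve the conjecture, so the proposal should be regarded as a discussion of possible approaches rather than a proof.

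Beyond that, there is a concrete flaw in the reduction to cases (a) and (b). Deforming a representation by a Denjoy-type blowup along an orbit of gaps, or by reparametrizing gap widths, produces representations that are still \emph{semi-conjugate} to the original (one collapses the inserted wandering intervals to recover $\rho$). By Proposition \ref{matsumoto prop}, semi-conjugate representations have identical rotation numbers $\rot_\gamma$ for every $\gamma$, so none of the deformations you describe in case (b) can change any $\rot_\gamma$ and hence none can exhibit non-rigidity. The same concern applies to case (a): a ``compactly supported isotopy that inserts a small rigid rotation'' on an arc complementary to a finite invariant set does not obviously produce a new homomorphism at all (the surface group relation must still hold after the perturbation, which requires modifying the whole generating set coherently), and even when it does the resulting path may well remain inside the semi-conjugacy class. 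What one needs, and what your sketch does not supply, is a deformation construction that provably \emph{exits} the semi-conjugacy class whenever $\rho$ is not a maximal $\PSLk$ representation up to semi-conjugacy; this is precisely the open content of the conjecture.
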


One can make an analogous statement for $\Hom(\Gamma_g, \Diff_+(S^1))$

\begin{conjecture} 
Suppose that $\rho \in \Hom(\Gamma_g, \Diff_+(S^1))$ is rigid, meaning that the connected component of $\Hom(\Gamma_g, \Diff_+(S^1))$ containing $\rho$ consists of a single semi-conjugacy class.  Then $\euler(\rho) = \frac{2g-2}{k}$ for some $k$, and $\rho$ lies in the semi-conjugacy class of a representation with image in $\PSLk$.  
\end{conjecture}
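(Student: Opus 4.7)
My plan is to reduce Conjecture 8.2 to a combination of the tools developed in this paper together with a smooth-category analogue of Bowden's argument for maximal representations. The proof splits naturally according to the value of the Euler number.

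Step 1 (non-maximal Euler number is not rigid). Suppose $\euler(\rho) = e$ where $e$ is \emph{not} of the form $\pm (2g-2)/k$ for any divisor $k$ of $2g-2$. The goal is to exhibit a $C^\infty$ path $\rho_t$ starting at $\rho$ along which some rotation number $\rot_\gamma(\rho_t)$ varies nontrivially, contradicting rigidity. The idea is to run the Calegari--Walker analysis of Sections \ref{CW sec}--\ref{rot pf sec} in reverse: since $e$ does not saturate the Milnor--Wood bound for any $\PSLk$, there must be at least one generator pair $(a_i,b_i)$ in any standard generating set such that $\rott[\tilde\rho(a_i),\tilde\rho(b_i)]$ is not of the form $1/k$, and correspondingly the conditions in Proposition \ref{max rot prop} fail. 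This slack leaves room to perturb $\rho(a_i)$ smoothly on a fundamental domain for its action (or on a wandering interval, if one exists), using Herman--Yoccoz-style smoothing to construct a $C^\infty$ isotopy that continuously moves $\rot(\rho_t(a_i))$ through a nondegenerate interval. The Euler number, being continuous and integer-valued, remains constant along this path, so $\rho_t$ remains in the same component as $\rho$, and rigidity is violated.

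Step 2 (maximal Euler number forces semi-conjugacy to $\PSLk$). Suppose now $\euler(\rho) = (2g-2)/k$ but the component $X \subset \Hom(\Gamma_g,\Diff_+(S^1))$ containing $\rho$ contains no $\PSLk$ representation. By the inclusion $\Diff_+(S^1) \hookrightarrow \Homeo_+(S^1)$, the image of $X$ lies in a single component of $\Hom(\Gamma_g,\Homeo_+(S^1))$, which by our Theorem \ref{main thm} must contain a $\PSLk$ representation $\rho_0$ (since there are exactly $k^{2g}$ maximal $\PSLk$ components, classified by rotation numbers of generators). Apply Theorem \ref{rot rig thm} and Corollary \ref{matsumoto cor} to conclude that every $\rho' \in X$ is semi-conjugate to $\rho_0$. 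Then invoke the $C^\infty$ version of Bowden's structural-stability argument (Theorem 9.5 of \cite{Bowden}): the transverse foliation of the flat circle bundle associated to $\rho'$ is conjugate to the weak-stable foliation of a $k$-fold cover of the geodesic flow on $\Sigma_g$, and this conjugation realizes $\rho'$ in the semi-conjugacy class of $\rho_0$. Thus $X$ must in fact have contained a $\PSLk$ representation, contradicting our assumption.

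The main obstacle will be Step 1. In the Lie-group setting, non-maximal components have positive dimension (of order $6g-6$) and deformations abound, but in $\Diff_+(S^1)$ one must construct deformations by hand. The delicate issue is that a representation $\rho$ with non-maximal Euler number may still have every $\rho(a_i)$ conjugate to a rigid rotation with irrational rotation number, in which case local smooth perturbations are constrained by the KAM-type obstructions of Herman--Yoccoz; showing one can always produce a deformation that changes \emph{some} $\rot_\gamma$ -- and not merely moves $\rho$ within a semi-conjugacy class -- will require a case analysis based on the rational/irrational structure of the rotation numbers of generators, combined with the rigidity results of Sections \ref{technical subsec}--\ref{rot rig subsec}. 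A secondary obstacle in Step 2 is justifying the passage from the $\Homeo_+$-semi-conjugacy statement proved in this paper to the $C^\infty$-topological statement that $X$ actually contains a $\PSLk$ representation; this is where one most crucially imports the smoothness hypothesis and Bowden's foliation-theoretic machinery.
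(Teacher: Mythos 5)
This statement is stated in the paper as a \emph{conjecture}, not a theorem: the author explicitly writes that ``Progress on these conjectures\ldots is the subject of a forthcoming paper.'' There is no proof in the paper to compare your proposal against, and the very fact that the author flags it as open should have been a warning sign.

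Evaluating your proposal on its own merits, both steps have real gaps. For Step~1 you acknowledge yourself that it is the ``main obstacle'' and that it ``will require a case analysis'' you have not carried out; the Calegari--Walker machinery of Sections~\ref{CW sec}--\ref{rot pf sec} produces \emph{upper bounds} on lifted rotation numbers and cannot simply be ``run in reverse'' to manufacture smooth deformations, and nothing in the paper addresses the KAM-type obstructions you raise. That step is a research plan, not a proof. Step~2 contains an outright error: you claim the component of $\Hom(\Gamma_g,\Homeo_+(S^1))$ containing the image of $X$ ``must contain a $\PSLk$ representation'' by Theorem~\ref{main thm}, but Theorem~\ref{main thm} says no such thing. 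It gives a \emph{lower bound} of $k^{2g}+1$ components at Euler number $\frac{2g-2}{k}$, and the ``$+1$'' comes precisely from a component constructed in Section~\ref{main pf sec} in which $\rot(\rho(a_g))$ is irrational --- a component that demonstrably contains no representation semi-conjugate to any $\PSLk$ representation. So your appeal to Theorem~\ref{rot rig thm} in Step~2 is unjustified: its hypothesis (that the component contains a maximal $\PSLk$ representation) is exactly what you need to establish and cannot be assumed. The same issue blocks the appeal to Bowden's structural-stability theorem, which applies only to components containing an Anosov representation. In short, the argument is circular at its key juncture and the conjecture remains open.
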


Progress on these conjectures, as well as related work on flexibility and rigidity of representations to $\Hom(\Gamma_g, \Diff_+(S^1))$ is the subject of a forthcoming paper.   For now, we note the following theorem of Goldman, which gives a quick answer to Conjecture \ref{rig conj} in the special case where $\rho$ has image in $\PSLk$.  

\begin{proposition}[Goldman, see the proof of Lemma 10.5 in \cite{Goldman}.]  \label{rot flex prop}
Let $\rho: \Gamma_g \to \PSLk$ satisfy $|\euler(\rho)| < \frac{2g-2}{k}$.  Then 
there is a representation $\nu$ in the same path component of $\Hom(\Gamma, \PSLk)$ as $\rho$, and an element $\gamma \in \Gamma_g$ such that $\rot(\rho(\gamma)) \neq \rot(\nu(\gamma))$.  
\end{proposition}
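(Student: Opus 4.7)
The plan is to project $\rho$ down to $\PSL(2,\R)$, exploit Goldman's classification of non-maximal components to freely vary rotation numbers there, and then lift back to $\PSLk$. Setting $\bar\rho = \pi \circ \rho \colon \Gamma_g \to \PSL(2,\R)$, multiplicativity of the Euler number under $\pi$ gives $|\euler(\bar\rho)| = k|\euler(\rho)| < 2g-2$. By Theorem~\ref{goldman thm} the path component of $\bar\rho$ in $\Hom(\Gamma_g, \PSL(2,\R))$ is determined by this strictly sub-maximal Euler number alone, with no additional rotation-number invariants.

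Within that component I would exhibit a representation $\bar\rho'$ together with a standard generating set $\{a_1,b_1,\ldots,a_g,b_g\}$ for which $\bar\rho'(a_g)$ is a nontrivial elliptic element and the final commutator $[\bar\rho'(a_g),\bar\rho'(b_g)]$ is as simple as possible. Concretely, when $|\euler(\bar\rho)| \leq 2g-4$ one can take a sub-maximal Fuchsian representation of $\Gamma_{g-1}$ realizing the full Euler number, extended by sending $a_g$ and $b_g$ to commuting rotations, so that $[\bar\rho'(a_g),\bar\rho'(b_g)] = \id$. The extremal case $|\euler(\bar\rho)| = 2g-3$ is handled by letting the last commutator absorb one unit of Euler class while still choosing $\bar\rho'(a_g)$ elliptic with $\bar\rho'(b_g)$ hyperbolic and suitably transverse; both cases are covered by Goldman's Lemma~10.5 in \cite{Goldman}. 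Continuously deforming the elliptic angle of $\bar\rho'(a_g)$, adjusting $\bar\rho'(b_g)$ if needed to preserve the commutator, then produces a family $\{\bar\rho'_t\}$ inside the component of $\bar\rho$ along which $\rot(\bar\rho'_t(a_g))$ sweeps out a nondegenerate arc in $\R/\Z$.

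The last step is to lift. A path starting at $\bar\rho$ in $\Hom(\Gamma_g,\PSL(2,\R))$ lifts uniquely to a path starting at $\rho$ in $\Hom(\Gamma_g,\PSLk)$, because $\pi$ is a covering map of Lie groups and the fiber of $\Hom(\Gamma_g,\PSLk) \to \Hom(\Gamma_g,\PSL(2,\R))$ over any representation is discrete, indexed by compatible choices of lifts of the generators. Since rotation numbers in $\R/\Z$ are preserved by $\pi$, the lifted family $\rho'_t$ satisfies $\rot(\rho'_t(a_g)) = \rot(\bar\rho'_t(a_g))$, and picking $t$ so that this differs from $\rot(\rho(a_g))$ yields the required $\nu = \rho'_t$ and $\gamma = a_g$. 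The main obstacle is the construction in the second paragraph: producing the intermediate representation $\bar\rho'$ with the prescribed elliptic structure and the path connecting it to $\bar\rho$ within the component, which is the substantive content of Goldman's Lemma~10.5 and rests on a careful decomposition of the Euler class along a separating curve. A minor secondary issue is that in the lifting step one must steer the deformation clear of representations with nontrivial stabilizer, where the fibration of representation varieties can fail to be a covering; this is avoidable by generic choice of the deformation.
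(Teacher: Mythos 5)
Your overall strategy---project to $\PSL(2,\R)$, use Goldman's connectivity of the non-maximal components there, and lift the deformation back---is sound, and you correctly identify that the substantive work is exactly Goldman's Lemma~10.5, which is all the paper cites for this statement (it gives no independent proof). Two of your supporting claims are wrong, though, and one deserves explicit correction.

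The assertion that ``rotation numbers in $\R/\Z$ are preserved by $\pi$'' is false. If $\phi \in \PSLk$ has projection $\bar\phi = \pi(\phi) \in \PSL(2,\R)$, then the circle action of $\phi$ is a lift of that of $\bar\phi$ through the $k$-fold cyclic cover, and the correct relation is $k\,\rot(\phi) \equiv \rot(\bar\phi) \pmod{1}$, i.e.\ $\rot(\phi) = \tfrac{1}{k}\bigl(\rot(\bar\phi)+m\bigr)$ for an integer $m$ depending on the lift. This is precisely why, in Proposition~\ref{maximal k reps prop}, lifts of parabolic-free Fuchsian images acquire rotation numbers $m/k$ rather than $0$. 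Your argument is rescuable because non-constancy of $t \mapsto \rot(\bar\rho'_t(a_g))$ forces non-constancy of $t \mapsto k\,\rot(\rho'_t(a_g)) \bmod 1$, and hence of $\rot(\rho'_t(a_g))$; but as written the justification is incorrect.

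The concern about ``representations with nontrivial stabilizer'' is a red herring. Post-composition with $\pi$ defines a map $\Hom(\Gamma_g, \PSLk) \to \Hom(\Gamma_g, \PSL(2,\R))$ whose restriction to the open-and-closed locus $\{\euler \equiv 0 \bmod k\}$ is a genuine covering map: it is the restriction of the product covering $\PSLk^{2g} \to \PSL(2,\R)^{2g}$ to a union of components of the preimage, and the defining relation is preserved under path-lifting of the generators since the error term lives in the discrete group $\Z/k\Z$. No genericity hypothesis is needed. (Stabilizers only cause trouble for the induced map on \emph{character} varieties, which is not what you are lifting.) As a final remark, the projection/lift detour is avoidable: Theorem~\ref{goldman thm}(a) already says that sub-maximal Euler numbers label single connected components of $\Hom(\Gamma_g, \PSLk)$, so it suffices to exhibit and deform the desired model representation directly in $\PSLk$. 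Either way the explicit construction deferred to Goldman's Lemma~10.5 is the real content.
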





\vspace{1cm}


\vspace{.5in}

Dept. of Mathematics, University of Chicago.  

5734 University Ave. Chicago, IL 60637.  

\textit{E-mail:} mann@math.uchicago.edu

\end{document}